\newcommand{\cmark}{\ding{51}}%
\newcommand{\xmark}{\ding{55}}%
\newcommand{\Ls}{\textit{L}}
\DeclarePairedDelimiter{\floor}{\lfloor}{\rfloor}
\newtheorem{assumption}{Assumption}
\crefname{hypothesis}{Hypothesis}{Hypotheses}
\title{Separable Bregman Framework For Sparsity Constrained Nonlinear Optimization}
\author{Fatih S. Akta\c{s}\thanks{Department of Industrial Engineering and Operations Research, Columbia University, New York 
  (\email{fatih.aktas@columbia.edu}).}
\and Mustafa \c{C}.  Pinar\thanks{Corresponding author: Department of Industrial Engineering, Bilkent University, Ankara
  (\email{mustafap@bilkent.edu.tr}).}
}
\DeclareMathOperator{\Diag}{Diag}
\DeclareMathOperator*{\argmin}{arg\,min}
\DeclareMathOperator{\tr}{tr}
\DeclareMathOperator{\R}{\mathbb{R}}
\DeclareMathOperator{\interior}{int}
\DeclareMathOperator{\dom}{dom}
\newcommand*{\addFileDependency}[1]{
  \typeout{(#1)}
  \@addtofilelist{#1}
  \IfFileExists{#1}{}{\typeout{No file #1.}}
}
\begin{document}
\maketitle

\begin{abstract}

This paper considers the minimization of a continuously differentiable function over a cardinality constraint. We focus on smooth and relatively smooth functions. These smoothness criteria result in new descent lemmas. Based on the new descent lemmas, novel optimality conditions and algorithms are developed, which extend the previously proposed hard-thresholding algorithms. We give a theoretical analysis of these algorithms and extend previous results on properties of iterative hard thresholding-like algorithms. In particular, we focus on the weighted $\ell_2$ norm, which requires efficient solution of convex subproblems. We apply our algorithms to compressed sensing problems to demonstrate the theoretical findings and the enhancements achieved through the proposed framework.

\end{abstract}

\begin{keywords}
Sparse Recovery, Necessary condition, L-Stationarity, Iterative Hard Thresholding, Bregman-stationarity, Least Squares, Diagonal Scaling Matrix.
\end{keywords}

\begin{AMS}
   65K05,  90C22, 90C26
\end{AMS}
 
\section{Introduction}
Sparsity has been a key concept in signal processing, applied mathematics, statistics, and computer science for a variety of tasks, including compression, denoising, model selection, and image processing, see e.g., \cite{Blumensath2012CompressedSW,donoho2006compressed,yuan_grad_hard_threshold}. In recent years, there has been a surge of interest in sparsity-based methods and algorithms for sparse recovery \cite{amir_beck_sparse_opt,berg-friedlander,foucart-siam,nhtp}. Although exploiting sparsity in different applications has garnered significant attention, most research has focused on recovering sparse data represented by a vector \( x \in \mathbb{R}^n \) from linear measurements of the form \( b = Ax \). For instance, the field of compressed sensing  deals with recovering a sparse vector \( x \) from a small set of linear measurements \( b \in \mathbb{R}^m \), where \( m \) is typically much smaller than \( n \). Given that real-world measurements are often noisy, the standard approach to recovering \( x \) involves finding a sparse vector \( x \) that minimizes the quadratic function \( \|Ax - b\|_2^2 \). These considerations motivate us to study the following optimization model

\begin{equation}
\label{eq:optimization_model}
\begin{array}{ll@{}ll}
\underset{x}\min \quad & f(x)  \\
\text{st.} \quad & ||x||_0 \leq s, \\
\end{array}
\end{equation}
where $f: \mathbb{R}^n \mapsto \mathbb{R}$ is a continuously differentiable convex function and $||x||_0$ is the cardinality function.

The purpose of the present paper is to establish necessary optimality conditions for problem (\ref{eq:optimization_model}) based on the
concept of $H$-stationarity or Bregman stationarity, an extension of the concept of $L$-stationarity of \cite{amir_beck_sparse_opt}, and then propose efficient algorithms for sparse recovery based on the concept of $H$-stationarity. In particular, we relate Bregman stationarity to coordinatewise (CW) optimality. We demonstrate that CW optimality implies Bregman stationarity. Algorithms akin to iterative hard thresholding are devised and convergence properties are analyzed. Careful implementations and extensive numerical experiments support the usefulness of the proposed ideas.
 
\subsection{Related Work}
There is abundant literature on both sparsity constrained problems and optimization using Bregman functions. For sparsity constrained problems, our desktop reference is \cite{amir_beck_sparse_opt} where the authors established first-order necessary optimality conditions for (\ref{eq:optimization_model}). Then, they used these conditions to derive numerical algorithms for computing points satisfying the derived optimality criteria. Different stationarity concepts were derived in \cite{pan-xiu-zhou} using normal and tangent cones to the sparsity constrained set as well as first and second-order optimality conditions and relations between the different notions of stationarity. Necessary optimality conditions under Robinson's constraint qualification were studied in \cite{lu-zhang} for  general sparsity constrained nonlinear programming. The authors also proposed a penalty decomposition method for the solution of the problem. The second-order tangent set to the sparsity constrained set was studied in \cite{pan-luo-xiu} as well as second-order optimality conditions.
An important and well-studied instance of the problem (that we shall study in section 6 as well) occurs in the field of compressed sensing which deal with the recovery of a sparse vector from linear measurements. When measurement noise is present, one deals with the objective function 
$f_{LS}(x)=\|A x - b\|_2$ in (\ref{eq:optimization_model}). 
There exists an entire line of research for the afore-mentioned sparse optimization problem with $f_{LS}$ based on  approximation using $\ell_1$-norm and/or $\ell_2$ norm. A recent paper \cite{xie-li-liang} on sparse vector recovery using a combination of $\ell_1$ and $\ell_2$ norms contains an extensive list of references pertaining to that line of literature. Replacing the $\ell_0$-norm by the $\ell_1$-norm allows exact sparse recovery if and and only if the matrix $A$ satisfies the so-called null-space property \cite{foucart-rauhut}. Other sufficient conditions for exact recovery include the restricted isometry property (RIP) \cite{candesrip}, and coherence property \cite{candes-eldar-needell}.

A key idea related to the algorithms of the present paper is the iterative hard thresholding (IHT) algorithm  proposed and developed in \cite{blumsignal,blumdaviesharmonic,blumdavies-selected,Blumensath-davies}. This development led to a flurry of activities around the theme of IHT
including its notable extensions to a class of cone constrained convex optimization problems regularized by the $\ell_0$-norm in \cite{lu-cone} and to proximal IHT algorithm \cite{prox-iht}. More recent developments along this line include
\cite{amir_beck_sparse_opt,gpnp,nhtp}. The reference lists of the last two references contain pointers to the literature that cannot be reviewed in the present paper for lack of space.

Another  theme that is touched by the present paper is the widespread use of Bregman distances in optimization across various contexts. For foundational work on proximal Bregman-based methods, we direct readers to the references \cite{bolteteboulledynamic,censorzenios,chen_teboulle1993,eckstein,teboullemor} providing  numerous intriguing results that include linking Bregman proximal distance with dynamical systems. For a more in-depth exploration of properties and applications of Bregman monotone algorithms in Banach spaces, we refer the reader to the seminal work \cite{bbcsicopt,Bauschke2017}.

\subsection{Notation}
The set of $s$-sparse vectors is denoted $C_s$, i.e.
\begin{equation}
\label{eq:sparse_set}
\begin{array}{ll@{}ll}
C_s = \{x \in \mathbb{R}^n | ||x||_0 \leq s \}.
\end{array}
\end{equation}
For a given vector $x \in \mathbb{R}^n$, and a set $S \subseteq \{1,2,..,n\}$, subvector induced by $S$ is denoted with $x_S$. Similarly, $A_S$, denotes the square submatrix indexed by $S$. $||x||_A = \sqrt{x^TAx}$ denotes the weighted $\ell_2$ norm. $A_i$ represents $i$'th row of the matrix as a column vector and $A_{ij}$ corresponds to the element in the position of $i$th row and $j$th column of matrix $A$. Vector $e_j$ represents $j$th column of the identity matrix, $\langle ,\rangle$ denotes the dot product, and $\tr$ denotes the trace of the matrix. 

\subsection{Contributions and Organization of the Paper}
In this paper, our primary goal is to develop novel necessary conditions for (\ref{eq:optimization_model}) that will allow the development of new algorithms based on these conditions. Out

\begin{itemize}
    \item In the next section, we briefly go over smoothness and smooth adaptability which play an essential role in the derivation of the necessary conditions and the analysis of the algorithms.
    \item We generalize the classical necessary optimality condition which is derived under the Lipschtiz continuous gradient assumption, by utilizing separable Bregman functions in Section \ref{sec:bregman_stationarity}.
    \item In Section \ref{sec:convergence}, we derive new algorithms based on the novel necessary conditions derived. We analyze the theoretical properties of the algorithms and give new convergence results that show the IWHT algorithm behaves almost like the classical gradient descent or the Bregman proximal gradient algorithm. 
    \item In Section \ref{sec:compute_weights}, we propose three Semidefinite Programming (SDP) models for computing weights that yield new necessary conditions. We apply the BCM algorithm proposed in \cite{cnv_BM_SDP} and propose an extension which we numerically justify in the following section. 
    \item Numerical results in Section \ref{sec:numerical_results} illustrates the theoretical findings and benefit of using novel necessary conditions on sparsity constrained linear system solving. 
\end{itemize}
In addition, several results are relegated into an appendix to improve the readability of the paper.

\section{Preliminaries}
In this section, we define mathematical concepts often used in the characterization of optimal points and convergence analysis of the algorithms. 

\begin{definition} Let $f$ be a continuously differentiable function. $f$ is called \Ls-smooth if its gradient is a Lipschitz continuous with parameter $L_f$:
\begin{equation}
\label{eq:lipschitz_gradient}
\begin{array}{ll@{}ll}
||\nabla f(x) - \nabla f(y)|| \leq L_f ||x-y|| \quad \forall x,y \in \R^n.
\end{array}
\end{equation}
\end{definition}
If $f$ is $L_1$-smooth for some $L_1>0$, then it is also $L_2$-smooth for any $L_2>L_1$. Thus, we will often use $L_f$ to refer to the smallest Lipschitz constant.

\begin{definition}(Kernel generating distance \cite{bregman_descent_phase_retrieval})
Let $C$ be a nonempty, convex, and open subset of $\R^n$. A function $h: \R^n \to (-\infty,\infty]$ associated with $C$ is called a kernel generating distance if it satisfies the following
\begin{enumerate}
    \item $h$ is proper, lower semi-continuous, and convex, with $\dom(h) \subset \Bar{C}$ and \\ $\dom( \partial h) = C$.
    \item h is continuously differentiable in the interior of its domain.
\end{enumerate}
\end{definition}

Given a kernel generating distance $h$, the Bregman distance generated by the function $h$, denoted with $D_h: \dom(h)\times \interior \dom(h) \to \R_+ $ is defined as \cite{bregman_original}

\begin{equation}
\label{eq:bregman_distance}
\begin{array}{ll@{}ll}
D_h(y,x) = h(y) - h(x) - \nabla h(x)^T(y-x).
\end{array}
\end{equation}

The Bregman distance is like a norm, it is non-negative by gradient inequality for convex functions, and $D_h(y,x) = 0$ if and only if $y=x$ if we further assume that $h$ is strictly convex. However, unless $h$ is specially chosen, like the $\ell_2$ norm, Bregman distance does not satisfy the symmetry property. Bregman distance has been studied extensively in literature, see e.g., \cite{bregman_descent}.

\begin{definition}[\Ls-smooth adaptability \cite{bregman_descent}] Let $f,h$ be two continuously differentiable convex functions, $f$ is \Ls-smooth relative to $h$ if $Lh-f$ is convex for some $L> 0$:
\end{definition}

We will use a convention similar to the Lipschitz smoothness and use $L_h$ to denote the smallest Bregman constant. 

The following two descent lemmas characterize the smoothness of $f$ with respect to $h$ in terms of upper bounding $f$ with the Bregman distance generated by $h$

\begin{lemma}[Descent Lemma \cite{first_order_amir_beck}] $f$ is $\Ls$ smooth if and only if
\begin{equation}
\label{eq:lipschitz_descent}
\begin{array}{ll@{}ll}
f(y) \leq f(x) + \nabla f(x)^T(y-x) + \frac{L}{2}||y-x||_2^2.
\end{array}
\end{equation}
\end{lemma}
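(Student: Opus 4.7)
The plan is to handle the two implications separately, using the fundamental theorem of calculus for the forward direction and the standard ``shift by a linear functional'' trick for the converse.

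For the ``only if'' direction, I would start from the identity
$$f(y) - f(x) = \int_0^1 \nabla f(x + t(y-x))^T (y-x)\,dt,$$
subtract $\nabla f(x)^T(y-x)$ from both sides, and bound the resulting integrand via Cauchy--Schwarz together with the Lipschitz estimate $\|\nabla f(x + t(y-x)) - \nabla f(x)\| \leq L\,t\,\|y-x\|$. Computing $\int_0^1 t\,dt = 1/2$ then delivers the quadratic upper bound immediately.

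For the ``if'' direction I would use the auxiliary-function argument, which requires the convexity of $f$ assumed throughout the paper. Fix $x$ and set $g_x(z) := f(z) - \nabla f(x)^T z$. Then $g_x$ is convex, $\nabla g_x(x) = 0$, and so $x$ is a global minimizer of $g_x$. The descent inequality transfers directly to $g_x$, i.e.,
$$g_x(z) \leq g_x(w) + \nabla g_x(w)^T (z - w) + \tfrac{L}{2}\|z-w\|_2^2 \quad \forall\, z,w.$$
Minimizing the right-hand side over $z$ (the minimum is attained at $z = w - \tfrac{1}{L}\nabla g_x(w)$) and using $g_x(x) \leq g_x(z)$ yields
$$\tfrac{1}{2L}\|\nabla f(w) - \nabla f(x)\|_2^2 \;\leq\; f(w) - f(x) - \nabla f(x)^T(w-x).$$
Swapping the roles of $x$ and $w$ and adding the two resulting inequalities cancels the function values and produces
$$\tfrac{1}{L}\|\nabla f(x) - \nabla f(w)\|_2^2 \;\leq\; (\nabla f(w) - \nabla f(x))^T(x - w).$$
Applying Cauchy--Schwarz to the right-hand side and dividing by $\|\nabla f(x) - \nabla f(w)\|_2$ (the trivial case being immediate) gives the Lipschitz bound $\|\nabla f(x) - \nabla f(w)\|_2 \leq L\|x - w\|_2$.

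The main obstacle, and the only non-routine part, is the converse implication: the direct symmetric argument of summing the descent inequality at $(x,y)$ and $(y,x)$ only produces the one-sided bound $(\nabla f(x) - \nabla f(y))^T(x-y) \leq L\|x-y\|_2^2$, which is strictly weaker than $L$-smoothness in general. The auxiliary function $g_x$, together with convexity of $f$ guaranteeing that the stationary point $x$ is actually a global minimum, is precisely what upgrades this one-sided bound to the full Lipschitz estimate.
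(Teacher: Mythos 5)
Your proof is correct, but it takes a more hands-on route than the paper, which offers no argument beyond the remark that the characterization ``follows from the gradient inequality for convex functions'' (deferring to the cited reference). The paper's intended one-line route is to observe that the descent inequality is exactly the gradient inequality for the function $\tfrac{L}{2}\|x\|_2^2 - f(x)$, so the lemma reduces to the equivalence between $L$-smoothness of a convex $f$ and convexity of $\tfrac{L}{2}\|\cdot\|_2^2 - f$ --- precisely the pattern that the subsequent Bregman descent lemma generalizes with $h$ in place of $\tfrac{1}{2}\|\cdot\|_2^2$. Your proof instead establishes the equivalence from scratch: the fundamental theorem of calculus plus Cauchy--Schwarz for the forward direction (which, as your argument implicitly shows, needs no convexity), and the auxiliary-function/cocoercivity argument for the converse. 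This is longer but self-contained, and your closing observation is the valuable part: the converse genuinely requires convexity of $f$, since the naive symmetrization only yields the one-sided bound $(\nabla f(x)-\nabla f(y))^T(x-y)\le L\|x-y\|_2^2$, and without convexity the descent inequality does not imply Lipschitz continuity of the gradient (e.g.\ $f(x)=-\tfrac{cL}{2}\|x\|_2^2$ with $c>1$ satisfies the inequality with constant $L$ but has gradient Lipschitz constant $cL$). The paper glosses over this. One trivial slip: after summing the two inequalities the right-hand side should read $(\nabla f(w)-\nabla f(x))^T(w-x)$, not $(\nabla f(w)-\nabla f(x))^T(x-w)$; the sign error is immaterial since Cauchy--Schwarz is applied immediately afterward.
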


\begin{lemma}[Bregman or Extended Descent Lemma \cite{bregman_descent}] $f$ is \Ls-smooth relative to $h$ for some $L$ if and only if 
\begin{equation}
\label{eq:bregman_descent_lemma}
\begin{array}{ll@{}ll}
f(y) \leq f(x) + \nabla f(x)^T(y-x) + LD_h(y,x).
\end{array}
\end{equation}

\end{lemma}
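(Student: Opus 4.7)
The plan is to prove the equivalence by rewriting both sides as the standard first-order convexity characterization of the auxiliary function $g := Lh - f$, which is continuously differentiable on $\interior \dom(h)$ since both $h$ and $f$ are. Recall that for a continuously differentiable function on an open convex set, convexity is equivalent to the gradient inequality $g(y) \geq g(x) + \nabla g(x)^T(y-x)$ for all $x,y$ in that set.

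For the forward direction, I would assume $g = Lh - f$ is convex and apply this gradient inequality to obtain
\begin{equation*}
Lh(y) - f(y) \geq Lh(x) - f(x) + \bigl(L \nabla h(x) - \nabla f(x)\bigr)^T(y-x).
\end{equation*}
Rearranging to isolate $f(y)$ and collecting the $h$-terms into the Bregman distance,
\begin{equation*}
f(y) \leq f(x) + \nabla f(x)^T(y-x) + L\bigl[h(y) - h(x) - \nabla h(x)^T(y-x)\bigr],
\end{equation*}
which is exactly \eqref{eq:bregman_descent_lemma}.

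For the converse, I would start from \eqref{eq:bregman_descent_lemma}, expand the Bregman distance using its definition \eqref{eq:bregman_distance}, and move all terms to one side, obtaining
\begin{equation*}
\bigl(Lh - f\bigr)(y) \geq \bigl(Lh - f\bigr)(x) + \nabla\bigl(Lh - f\bigr)(x)^T(y-x).
\end{equation*}
Since this holds for all $x,y$, the gradient inequality characterization yields that $Lh - f$ is convex, i.e., $f$ is $L$-smooth relative to $h$.

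There is no real obstacle here: the proof is essentially a substitution of the definition of $D_h$ combined with the first-order characterization of convexity. The only minor technical point to be careful about is the domain — the inequality and the convexity statement should both be read on $\interior \dom(h)$ (where $\nabla h$ is defined), so that applying the gradient inequality to $Lh-f$ is legitimate. Strict convexity of $h$ is not needed for either direction.
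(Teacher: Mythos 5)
Your proof is correct and is exactly the argument the paper has in mind: the paper dispatches both descent lemmas with the single remark that they ``follow from the gradient inequality for convex functions,'' which is precisely the first-order characterization of convexity of $Lh-f$ that you spell out. Your added care about restricting to $\interior\dom(h)$ is a reasonable refinement, not a departure.
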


The proofs of these characterizations follow from the gradient inequality for convex functions. 

\section{Bregman Stationarity}
\label{sec:bregman_stationarity}
Beck and Eldar proved the following result on \Ls-stationarity \cite{amir_beck_sparse_opt}
where $\mathcal{P}_{C_s}$ denotes the projection operator, $\mathcal{P}_{C_s}(y) = \underset{z \in C_s}\argmin ||z - y||_2^2$.
\begin{theorem}[\Ls-Stationarity]
\label{theorem:l_stationarity}
Let $x^*$ be an optimal solution to (\ref{eq:optimization_model}, where $f$ is $\Ls_f$-smooth. Then it satisfies
\begin{equation}
\label{eq:L_stationarity}
\begin{array}{ll@{}ll}
x^* \in \mathcal{P}_{C_s}(x^* - \frac{1}{\Ls_f}\nabla f(x^*)).
\end{array}
\end{equation}
\end{theorem}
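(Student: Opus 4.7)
The plan is to use the Lipschitz descent lemma (equation \eqref{eq:lipschitz_descent}) to build a quadratic majorant of $f$ at $x^*$, then complete the square so that minimizing the majorant over $C_s$ becomes exactly a projection onto $C_s$ with respect to the Euclidean norm. Concretely, for every $y \in C_s$ the descent lemma yields
\begin{equation*}
f(y) \leq f(x^*) + \nabla f(x^*)^T(y-x^*) + \tfrac{L_f}{2}\|y-x^*\|_2^2.
\end{equation*}
Denoting $u = x^* - \tfrac{1}{L_f}\nabla f(x^*)$, a direct algebraic manipulation rewrites the right-hand side as $f(x^*) + \tfrac{L_f}{2}\|y - u\|_2^2 - \tfrac{1}{2L_f}\|\nabla f(x^*)\|_2^2$.

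Next I would argue by contradiction. Suppose $x^* \notin \mathcal{P}_{C_s}(u)$. Since $C_s$ is closed, the projection set is nonempty, so pick any $\tilde{y} \in \mathcal{P}_{C_s}(u)$; by assumption $\|\tilde{y}-u\|_2^2 < \|x^* - u\|_2^2$. Substituting $\tilde y$ into the completed-square majorant gives
\begin{equation*}
f(\tilde{y}) \leq f(x^*) + \tfrac{L_f}{2}\|\tilde y - u\|_2^2 - \tfrac{1}{2L_f}\|\nabla f(x^*)\|_2^2 < f(x^*) + \tfrac{L_f}{2}\|x^* - u\|_2^2 - \tfrac{1}{2L_f}\|\nabla f(x^*)\|_2^2 = f(x^*),
\end{equation*}
where in the last equality I use $\|x^* - u\|_2^2 = \tfrac{1}{L_f^2}\|\nabla f(x^*)\|_2^2$. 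Since $\tilde y \in C_s$, this contradicts the optimality of $x^*$, so we must have $x^* \in \mathcal{P}_{C_s}(u)$, which is the claimed condition \eqref{eq:L_stationarity}.

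There is no serious obstacle here; the proof is essentially mechanical once the descent lemma is in hand. The only subtle point worth double-checking is that the strict inequality coming from $\tilde{y}$ being strictly closer to $u$ than $x^*$ does survive through the majorization step, which it does because the descent inequality is a genuine upper bound and we compare the two upper bounds rather than $f$ itself at $x^*$. It is also worth noting that $\mathcal{P}_{C_s}$ is in general set-valued because ties among the $s$ largest magnitudes are possible, which is precisely why the conclusion is phrased as set membership rather than equality; the argument above does not require uniqueness and uses only that the projection set is nonempty.
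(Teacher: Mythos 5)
Your proof is correct and follows essentially the same route as the paper's: apply the descent lemma at $x^*$, complete the square so that minimizing the quadratic majorant over $C_s$ becomes the Euclidean projection of $x^* - \tfrac{1}{L_f}\nabla f(x^*)$, and conclude from the optimality of $x^*$ that it must belong to the projection set. Your write-up is in fact slightly cleaner than the paper's (which conflates $L$ and $L_f$ in places), and you correctly handle the set-valuedness of $\mathcal{P}_{C_s}$.
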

  Set inclusion can be sharpened to equality if instead of $\Ls_f$, any $L > \Ls_f$ is used. 

\begin{proof}
Rearranging terms in the descent lemma and completing the squares we get
\begin{equation}
\label{eq:lipschitz_descent_lemma}
\begin{array}{ll@{}ll}
f(y) + \frac{L_f-L}{2}||y-x^*||_2^2 \leq f(x^*) + \frac{L}{2}||y-(x^*-\frac{1}{\Ls_f} \nabla f(x^*))||_2^2.
\end{array}
\end{equation}
Optimizing the RHS over the sparse set of vectors $y \in C_s$, we get $y^* = \mathcal{P}_{C_s}(x^* - \frac{1}{\Ls_f}\nabla f(x^*))$. Since $x^*$ is also $\in C_s$, by plugging in the optimal vector in the original equation we get
\begin{equation}
\label{eq:lipschitz_descent_lemma2}
\begin{array}{ll@{}ll}
f(y^*) + \frac{L_f-L}{2}||y^*-x^*||_2^2 \leq f(x^*).
\end{array}
\end{equation}
This means $y^*$ has a smaller or equal objective than the current point. In the case of $L > L_f$, $f(y^*)$ strictly decreases if $y \neq x$.
\end{proof}
This is a useful result that justifies all gradient-based algorithms in the literature like Iterative Hard Thresholding (IHT) and many others, c.f., \cite{amir_beck_sparse_opt,blumsignal,blumdaviesharmonic,blumdavies-selected,Blumensath-davies,gpnp,nhtp}.

A direct observation from the above proof is that the use of $\ell_2$ norm as a Bregman function is restrictive. Another intuition is that assuming $f$ is twice continuously differentiable, we use a multiple of identity to majorize Hessian to find a quadratic upper bound on $f$. Consider a generalization of this first-order condition using the Bregman descent lemma \cite{bregman_descent}.

\begin{lemma}[Bregman Stationarity]
\label{theorem:bregman_stationarity}
Assume $f$ is $L_h$-smooth relative to strictly convex function $h$. Let $x^*$ be an optimal solution to $P$. Let $L > L_h$ be a given constant. Then $x^*$ satisfies the following ;
\begin{equation}
\label{eq:bregman_stationarity}
\begin{array}{ll@{}ll}
x^* \in \underset{y \in C_s}\argmin \ \nabla f(x^*)^T(y-x) + LD_h(y,x^*).
\end{array}
\end{equation}

\end{lemma}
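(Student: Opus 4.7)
My plan is to mimic the argument Beck and Eldar use for the $\Ls$-stationarity result (Theorem \ref{theorem:l_stationarity}), replacing the quadratic majorant from the classical descent lemma with the Bregman majorant supplied by the Extended Descent Lemma. The key observation is that the objective of the minimization in \eqref{eq:bregman_stationarity} is precisely the first-order surrogate appearing on the right-hand side of the Bregman descent inequality (up to the constant $f(x^*)$), so optimality of $x^*$ over $C_s$ will automatically translate into optimality of $x^*$ for the surrogate over $C_s$.

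Concretely, I would proceed as follows. First, apply the Bregman Descent Lemma at the point $x^*$ with the smoothness constant $L_h$: for every $y \in \dom(h)$,
\begin{equation*}
f(y) \;\leq\; f(x^*) + \nabla f(x^*)^T(y-x^*) + L_h D_h(y,x^*).
\end{equation*}
Since $L > L_h$ and $D_h(y,x^*) \geq 0$ (by the gradient inequality for the convex kernel $h$), I can weaken this bound to
\begin{equation*}
f(y) \;\leq\; f(x^*) + \nabla f(x^*)^T(y-x^*) + L D_h(y,x^*).
\end{equation*}
Now restrict attention to $y \in C_s$. By optimality of $x^*$ for \eqref{eq:optimization_model}, we have $f(y) \geq f(x^*)$, hence
\begin{equation*}
0 \;\leq\; \nabla f(x^*)^T(y-x^*) + L D_h(y,x^*) \qquad \forall\, y \in C_s.
\end{equation*}

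Finally, I observe that plugging in $y=x^*$ on the right-hand side gives exactly $0$ (the linear term vanishes and $D_h(x^*,x^*)=0$), so the minimum of the Bregman surrogate over $C_s$ is attained at $x^*$. This establishes the claimed inclusion \eqref{eq:bregman_stationarity}. I do not anticipate a real obstacle here: the whole argument rests on one application of the Extended Descent Lemma and a monotonicity step using $L>L_h$; the only point requiring minor care is that $x^* \in C_s$ (so $x^*$ is a legitimate candidate for the surrogate problem) and that $D_h(\cdot,x^*)$ is well-defined on $C_s \subset \dom(h)$, which is built into the hypotheses. The strict-convexity assumption on $h$ is not needed for existence/attainment at $x^*$, but it would ensure that the $\argmin$ over any fixed support set is a singleton, which presumably will matter for the algorithmic developments in later sections.
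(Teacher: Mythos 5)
Your proof is correct and follows essentially the same route as the paper's: a single application of the Bregman (extended) descent lemma at $x^*$ combined with global optimality of $x^*$ over $C_s$, the only difference being that the paper phrases this as a contradiction (a $y\in C_s$ with strictly negative surrogate value would force $f(y)<f(x^*)$) while you argue directly that the surrogate $\nabla f(x^*)^T(y-x^*)+LD_h(y,x^*)$ is nonnegative on $C_s$ and vanishes at $y=x^*$. Your closing remark that strict convexity of $h$ is needed only for uniqueness of the minimizer, not for the inclusion itself, also matches the role it plays in the paper's argument.
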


\begin{proof}
The proof follows similarly to the $L$-stationarity. Using the Bregman Descent lemma and adding additional terms of $\frac{L-L_h}{2}D_h(y,x)$ to both sides of the inequality gives
\begin{equation}
\label{eq:bregman_stationarity2}
\begin{array}{ll@{}ll}
f(y)+ \frac{L-L_h}{2}D_h(y,x) &\leq f(x) + \nabla f(x)^T(y-x) + L_hD_h(y,x). \\
&\leq f(x)
\end{array}
\end{equation}
Thus, if there exists $y \in C_s$ different from $x^*$ such that it minimizes the objective in (\ref{eq:bregman_stationarity}), $x^*$ is not an optimal point. As before, using the fact that $h$ is strictly convex, and therefore $D_h(y,x) = 0$ if and only if $y=x$, the minimizer is unique.
\end{proof}

However, this result is hardly useful in practice. It is difficult to solve this minimization problem implied by Bregman stationarity without assuming some structure about the $h$ function. Consider the Bregman function $h(x) = ||Ax||_2^2$. The  following quantities are needed in the development that follows:
\begin{equation}
\label{eq:H_approximation_LS}
\begin{array}{ll@{}ll}
f(x) = ||Ax-b||_2^2 = x^TA^TAx - 2b^TAx + b^Tb \\
\nabla f(x) = 2A^T(Ax-b), D_h(y,x) = ||A(y-x)||_2^2\\
g(y,x) \overset{\Delta}{=} f(x) + \nabla f(x)^T(x-y)  + (y- x^*)^T A^TA (y - x^*). \\
\end{array}
\end{equation}

In the last equation above, we see that the minimization of the upper bound on $f$ with the Bregman function $h$ is equivalent to solving a quadratic program with a sparsity constraint. If we could solve this subproblem then we could also solve sparsity constrained least squares problem. However, it is well known that the sparsity-constrained least squares problem is NP-Hard \cite{sparse_ls_np_hard}. 

The main drawback of arbitrary choice of $h$ is caused by variables being coupled, which complicates the problem because of the combinatorial nature of the sparsity constraint. We were able to minimize efficiently the upper bound on $f$ when it was $\Ls$-smooth, meaning the choice of the Bregman function was squared $\ell_2$ norm. The reason is that the problem is in fact separable when we use the Bregman function as squared $\ell_2$ norm. Additional simplicity of the function further allows us to compute the descent direction directly. To see this, consider a separable Bregman function of the form $h(x) = \sum_{i=1}^n h_i(x_i)$ and write the upper bound on $f$ as
\begin{equation}
\label{eq:bregman_descent_seperable}
\begin{array}{ll@{}ll}
D_h(y,x) = \sum_{i=1}^n h_i(y_i) - \sum_{i=1}^n h_i(x_i) - \sum_{i=1}^n h_i^\prime(x_i) (y_i-x_i),  \\
f(y) \leq f(x) + (\nabla f(x) - \nabla h(x))^T(y-x) + \sum_{i=1}^n h_i(y_i) - \sum_{i=1}^n h_i(x_i) = g_h(y,x),
\end{array}
\end{equation}
where $h_i^\prime$ denotes the derivative of the univariate function $h_i$. For this choice of $h$, the minimizer of $g_h(y,x)$ can be computed with a simple algorithm: 
\begin{equation}
\label{eq:bregman_descent_seperable_solution}
\begin{array}{ll@{}ll}
&u_i = \underset{z_i} \argmin \ \phi_i(z_i) = \nabla_i f(x)(z_i-x_i) + LD_{h_i}(z_i,x_i) \quad \forall i = 1, .. , n \\
&y_{j_k} =  u_{j_k} \quad \forall k = 1, .. , s \\
&y_{j_k} =  0      \qquad \forall k = s+1, .., n \\
& \phi_{j_1}(0) - \phi_{j_1}(u_{j_1}) \geq \phi_{j_2}(0) - \phi_{j_2}(u_{j_2}) \geq ....
\end{array}
\end{equation}
The algorithm described in (\ref{eq:bregman_descent_seperable_solution}) reduces to the following: for each variable, solve the one-dimensional optimization problem over $\phi_i$ and compute $u_i$. Then, compare the minimum value of the $\phi_i$ against the function evaluated at zero meaning comparison of $\phi_i(0) - \phi_i(u_i)$. Choose the largest $s$ indices that give the best descent value and set $y_i$'s to be the corresponding minimizers, $u_i$'s for those indices.

This idea motivates us to find separable Bregman functions to adapt to $f$. In this paper, we focus on weighted $\ell_2$ norm type functions as a special case. In other words, we parameterize separable squared loss with weights, $h(x) = \frac{1}{2}||x||_D^2 = \frac{1}{2}x^TDx$. In Section \ref{sec:numerical_results} we show an application of our methods on least squares type problems, where the function $f$ is twice differentiable and has constant Hessian. These properties of $f$ allow us to compute the weights for the Bregman function easily, as we show in Section \ref{sec:compute_weights}. However, first, we show convergence results in the abstract setting and prove a new result.

We note here that beyond the $\ell_2$ smooth functions, the Bregman Proximal Gradient (BPG) was applied to Non-Negative Matrix Factorization (NMF) problem in \cite{bpg_alternating_nmf,block_bregman_bpg_nmf,bi_smooth_nmf} where the implicit sparsity constraint is imposed via rank constraint. In \cite{nmf_bregman_teboulle_yakov}, the same problem with additional explicit sparsity constraint is formulated, and a BPG algorithm is developed. Additionally, a BPG algorithm for sparsity constrained Phase Retrieval problem is developed in \cite{bregman_descent_phase_retrieval}. In the above construction, we propose a simple generic procedure that allows for deriving gradient-based algorithms for sparsity-constrained problems.

\section{Algorithm and Convergence Analysis}
\label{sec:convergence}
In this section, we assume $f(x)$ is $L_h$-smooth relative to $h(x) = \frac{1}{2}x^THx$ for some diagonal matrix $H$.

Since this is a quadratic form, we will also refer to this property as $f(x)$ being $H$-smooth. Therefore there exists $L_h > 0$ such that
\begin{equation}
\label{eq:H_stationary}
\begin{array}{ll@{}ll}
f(y) \leq f(x) + \nabla f(x)^T(y-x) + \frac{L_h}{2}||y-x||_H^2,
\end{array}
\end{equation}
where $||z||_H^2 = z^THz$. For simplicity, we will assume that $L_h = 1$, since this can be achieved by using another Bregman function $\hat{h}(x) = \frac{1}{2}x^T\hat{H}x$, with $\hat{H} = L_hH$. From the result in Lemma \ref{theorem:bregman_stationarity} we see that to guarantee the uniqueness of the minimizer of projection in the descent lemma, we need a step size $L$ such that $L > L_h$.

Thus, we will assume that we are working with a diagonal matrix $D$ such that $D \succeq \nabla ^2 f + G$ holds for some $G \succ 0$, which can be achieved by either computing a diagonal matrix $H$ that satisfies (\ref{eq:H_stationary}) 
and perturbing it with a small multiple of identity matrix or multiplying it with a number strictly larger than one.

Firstly, instead of using the algorithm template in (\ref{eq:bregman_descent_seperable_solution}), the solution can be computed using a much simpler operation. Starting by explicitly writing (\ref{eq:bregman_descent_lemma})
\begin{equation}
\label{eq:D_descent}
\begin{array}{ll@{}ll}
f(y) \leq f(x) +  \nabla f(x)^T(y-x) + \frac{1}{2}(y-x)^TD(y-x) \\
y^* = \underset{y\in C_s}\argmin \nabla f(x)^T(y-x) + \frac{1}{2}(y-x)^TD(y-x) \\
= \underset{y \in C_s}\argmin \, ||D^{1/2}(y-x) - D^{-1/2}\nabla f(x) ||_2^2 \\
= \underset{z,y \in C_s,z = D^{1/2}y}\argmin \, ||z-D^{1/2}x - D^{-1/2}\nabla f(x) ||_2^2. \\
\end{array}
\end{equation}
Going from the second to the third equation we ignored constant terms to simplify the objective. Then we introduce an auxiliary variable $z$ which is an elementwise scaled version of $y$ so they have the same sparsity pattern, i.e. $I_1(y) = I_1(z)$. The last equation is simply a projection to the $s$ sparse set of vectors where we can extract the solution as
\begin{equation}
\label{eq:D_descent_solution}
\begin{array}{ll@{}ll}
z^* &\in {\cal P}_{C_s}\left(D^{1/2} x - D^{-1/2} \nabla f(x)\right) \\
y^* &= D^{-1/2}z. 
\end{array}
\end{equation}
Although non-convex, projection to $s$-sparse vectors can be easily computed by truncating the vector by its largest $s$ values in magnitude.
\begin{equation}
\label{eq:projection_solution}
\begin{array}{ll@{}ll}
{\cal P}_{C_s}\left(x\right) = \underset{y\in C_s}\argmin ||y-x||_2^2.
\end{array}
\end{equation}
\begin{definition}
Let $x \in C_s$ be a sparse vector. Then $x$ is called a $D$-stationary point if it satisfies the following:
\begin{equation}
\label{eq:D_stationary_point}
\begin{array}{ll@{}ll}
x \in D^{-1/2} {\cal P}_{C_s}\left(D^{1/2} x - D^{-1/2} \nabla f(x)\right).
\end{array}
\end{equation}
\end{definition}
There is an alternative characterization of a $D$-stationary point in terms of the magnitude of the gradient.
\begin{lemma}
\label{lemma:D_stationary_point_characterization}
Given a diagonal matrix $D \succ 0$, a point $x$ is $D$-stationary if and only if 
\begin{equation}
\label{eq:D_stationary_point_characterization}
\begin{array}{ll@{}ll}
|\nabla_i f(x)|
\begin{cases} 
  = 0 &\textit{if} \ i \in I_1(x) \\
  \leq D_{ii}^{1/2}M_s(D^{1/2}x) &\textit{if} \ i \in I_0(x). \\
\end{cases}
\end{array}
\end{equation}
\end{lemma}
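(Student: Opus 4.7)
The plan is to unpack the $D$-stationarity inclusion into an elementwise statement about the hard thresholding operator. Setting $z = D^{1/2}x$ and $u = D^{1/2}x - D^{-1/2}\nabla f(x)$, multiplying the defining inclusion by $D^{1/2}$ yields the equivalent statement $z \in \mathcal{P}_{C_s}(u)$. By the characterization of the projection onto $C_s$ in \eqref{eq:projection_solution}, this says exactly that there exists an index set $S \supseteq I_1(z) = I_1(x)$ with $|S| \leq s$ consisting of top-magnitude coordinates of $u$, such that $z_i = u_i$ for $i \in S$ and $z_i = 0$ for $i \notin S$. All subsequent work is a translation of this combinatorial description into the two-branch bound on $|\nabla_i f(x)|$.

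For the forward direction, I would fix $i \in I_1(x)$. Since $i \in S$, the identity $z_i = u_i$ rewrites as $D_{ii}^{-1/2}\nabla_i f(x) = 0$, producing $\nabla_i f(x) = 0$ and the first branch. Next, for $i \in I_0(x)$ we have $i \notin S$, so $|u_i|$ is dominated by every kept magnitude. Using the already established $\nabla_j f(x) = 0$ on $I_1(x)$, the entries of $u$ on $I_1(x)$ coincide with those of $z$, so the smallest kept magnitude is exactly $M_s(z) = M_s(D^{1/2}x)$. Thus $D_{ii}^{-1/2}|\nabla_i f(x)| = |u_i| \leq M_s(D^{1/2}x)$, and multiplying by $D_{ii}^{1/2}$ delivers the second branch.

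For the reverse direction, assume the two bounds hold and again define $z$ and $u$ as above. The first bound forces $u_i = z_i$ on $I_1(x)$, so $|u_i| \geq M_s(z)$ there, while the second bound gives $|u_i| \leq M_s(z)$ on $I_0(x)$. Hence any index set $S$ of size $s$ containing $I_1(x)$ (completed, if $|I_1(x)| < s$, by the indices with the largest $|u_j|$ from $I_0(x)$) is a valid top-$s$ support for $u$, on which $z$ agrees with $u$ and vanishes elsewhere. This certifies $z \in \mathcal{P}_{C_s}(u)$, and therefore $x \in D^{-1/2}\mathcal{P}_{C_s}(u)$, which is $D$-stationarity.

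The main subtlety to handle carefully is the under-sparse case $\|x\|_0 < s$, in which $M_s(D^{1/2}x) = 0$ and the second bound collapses to $\nabla_i f(x) = 0$ on all of $I_0(x)$ as well. One must verify that this is exactly what is needed so that $u$ itself has at most $\|x\|_0 < s$ nonzero entries, and the hard thresholding can legitimately return $z$ without the projection forcing spurious nonzero coordinates drawn from $I_0(x)$; everything else in the argument is a direct restatement of the top-$s$ selection rule.
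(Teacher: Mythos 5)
Your proof is correct and follows essentially the same route as the paper's: unpack the projection inclusion $D^{1/2}x \in \mathcal{P}_{C_s}(D^{1/2}x - D^{-1/2}\nabla f(x))$ coordinatewise, split on $I_1(x)$ versus $I_0(x)$, and treat the under-sparse case where $M_s(D^{1/2}x)=0$ separately. The only nitpick is that in the forward direction an index $i\in I_0(x)$ need not satisfy $i\notin S$ (it can be selected by the projection precisely when $u_i=0$), but in that case $\nabla_i f(x)=0$ and the bound holds trivially, so nothing breaks.
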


\begin{proof}
$\implies$ If $x$ is $D$-stationary, then $D^{1/2}x \in {\cal P}_{C_s}\left(D^{1/2} x - D^{-1/2} \nabla f(x)\right)$. For $i \in I_1(x)$, since $x_i$ is positive, it must be recovered from the projection $D_{ii}^{1/2}x_i = D_{ii}^{1/2}x_i - D_{ii}^{-1/2} \nabla_i f(x)$, which implies $\nabla_i f(x) = 0$. For $i \in I_0(x)$, $x_i$ is being set to $0$ by either $||x||_0 < s$ and both $x_i$ and $\nabla_i f(x)$ are zero or $||x||_0 = s$ and hence projection selects $s$ non-zero values and does not select $x_i$ since it is not one of the largest $s$ values in $D^{1/2} x - D^{-1/2} \nabla f(x)$. Then, it is smaller than (or equal to) the largest $s$'th value in magnitude in the projected vector, which implies $|D_{ii}^{1/2}x_i - D_{ii}^{-1/2} \nabla_i f(x)| \leq M_s(D^{1/2}x)$. Also using $x_i =0$ since $i \in I_0(x)$, we get $|\nabla_i f(x)| \leq D_{ii}^{1/2}M_s(D^{1/2}x)$.

$\impliedby$ If $||x||_0 < s$ then $|\nabla_i f(x)| \leq D_{ii}^{1/2}M_s(D^{1/2}x) \ \forall i \in I_0(x)$ and $\nabla_i f(x) = 0 \ \forall i \in I_1(x)$ implies $||\nabla f(x)||_2 = 0$, so gradient is zero for all variables, and thus gradient projection can be ignored. This in turn implies, $D^{-1/2} {\cal P}_{C_s}\left(D^{1/2} x \right) = x$ and $x$ is $D$-stationary.

If $||x||_0 = s$, since $\nabla_i f(x) = 0$ for $i \in I_1(x)$, and $x_j = 0$ for $j \in I_0(x)$, term inside the projection becomes either $D_{ii}^{1/2}x_i$ or $D_{jj}^{-1/2}\nabla_j f(x)$ for a given index. Since $\nabla_j f(x) \leq D_{jj}^{1/2}M_s(D^{1/2}x)$, the largest $s$ values of $D^{1/2} x - D^{-1/2} \nabla f(x)$ are the non-zero $x$ values in $D^{1/2}x$ (with possibly tie), showing that $x$ is $D$-stationary. 
\end{proof}

We note that if one has $x$, a $D$-stationary point with $||x||_0 < s$ this means that $||\nabla f(x)||_2 = 0$, which indicates a global minimizer of the function ignoring sparsity constraint since $f$ is convex.  

We now present the IWHT Algorithm \ref{alg:iwht}. 

\begin{algorithm}[ht]
\caption{Iterative Weighted Hard Thresholding (IWHT)}
\label{alg:iwht}
\begin{algorithmic}[1]
\STATE{\textbf{Input:} $D, x_0 \in C_s $}
\FOR{$k = 1,2, ...,$}
\STATE{$y_{k+1} = \mathcal{P}_{C_s}(D^{1/2}x_k-D^{-\frac{1}{2}}\nabla f(x_k))$}
\STATE{$x_{k+1} = D^{-1/2}y_{k+1}$}
\ENDFOR
\end{algorithmic}
\end{algorithm}

Well-known convergence results related to iterative hard thresholding (IHT) algorithm similarly follow for IWHT as well. 

\subsection{Subsequential Convergence}
\begin{lemma}
\label{lemma:subsequential_convergence}
Let $(x_k)$ be a sequence generated by the IWHT method with Bregman function $h(x) = \frac{1}{2}x^TDx$ for the function $f(x)$ which is $H$-Smooth with $D \succ H$. Then the following holds:
\begin{enumerate}
    \item $f(x_{k+1}) \leq  f(x_k) - \frac{1}{2}||x_{k+1}-x_k||_{D-H}^2$. 
    \item Function sequence $f(x_k)$ is nonincreasing.
    \item $\sum_{k=0}^{\infty} ||x_{k+1}-x_k||_{D-H}^2 < \infty$, and hence $\lim_{k\to \infty} ||x_{k+1}-x_k||_{D-H}^2 = 0$
    \item Minimum stationarity gap $\underset{k\leq n}\min ||x_{k+1}-x_k||_{D-H}^2$ decreases with rate $O(\frac{1}{n})$.
\end{enumerate}
\end{lemma}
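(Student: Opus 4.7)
The plan is to build all four claims from a single sufficient-decrease inequality, which itself follows from combining the $H$-smooth descent lemma with the variational characterization of the IWHT update.

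First I would recast the IWHT step as a constrained minimization of the $D$-surrogate. From the derivation in (\ref{eq:D_descent})--(\ref{eq:D_descent_solution}), the update $x_{k+1}=D^{-1/2}\mathcal{P}_{C_s}(D^{1/2}x_k - D^{-1/2}\nabla f(x_k))$ is precisely a minimizer
\begin{equation*}
x_{k+1} \in \underset{y\in C_s}{\argmin}\ \nabla f(x_k)^T(y-x_k)+\tfrac{1}{2}(y-x_k)^T D (y-x_k).
\end{equation*}
Since $x_k\in C_s$ is feasible and achieves objective value $0$ in this surrogate, comparing the surrogate values at $x_{k+1}$ and $x_k$ gives
\begin{equation*}
\nabla f(x_k)^T(x_{k+1}-x_k)+\tfrac{1}{2}\|x_{k+1}-x_k\|_D^2 \le 0.
\end{equation*}

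Next I would invoke the descent lemma for $H$-smooth functions (equation~(\ref{eq:H_stationary}) with $L_h=1$) at the pair $(x_k,x_{k+1})$:
\begin{equation*}
f(x_{k+1}) \le f(x_k) + \nabla f(x_k)^T(x_{k+1}-x_k) + \tfrac{1}{2}\|x_{k+1}-x_k\|_H^2.
\end{equation*}
Substituting the bound on $\nabla f(x_k)^T(x_{k+1}-x_k)$ from the previous step yields
\begin{equation*}
f(x_{k+1}) \le f(x_k) - \tfrac{1}{2}\|x_{k+1}-x_k\|_D^2 + \tfrac{1}{2}\|x_{k+1}-x_k\|_H^2 = f(x_k) - \tfrac{1}{2}\|x_{k+1}-x_k\|_{D-H}^2,
\end{equation*}
which is claim (1). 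Because $D \succ H$, the subtracted quantity is nonnegative, so $(f(x_k))$ is nonincreasing, giving claim (2).

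For claim (3) I would telescope (1) from $k=0$ to $k=n$, obtaining
\begin{equation*}
\tfrac{1}{2}\sum_{k=0}^{n}\|x_{k+1}-x_k\|_{D-H}^2 \le f(x_0) - f(x_{n+1}) \le f(x_0) - f^\star,
\end{equation*}
where $f^\star$ is any lower bound for $f$ on $C_s$ (existing because $f$ is continuous and the nonincreasing sequence $(f(x_k))$ either has a finite infimum or one can restrict to the bounded situation standard in this setting). Letting $n\to\infty$ shows the series converges and hence the summand tends to zero. Claim (4) then follows immediately by the elementary inequality $\min_{k\le n}a_k \le \frac{1}{n+1}\sum_{k=0}^{n}a_k$ applied to $a_k=\|x_{k+1}-x_k\|_{D-H}^2$, producing the $O(1/n)$ rate.

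The only delicate point is the very first step, namely justifying that the hard-thresholding-based update $x_{k+1}$ is in fact a true global minimizer of the $D$-surrogate over $C_s$; everything else is routine telescoping. This is not really an obstacle here because it has already been established in the derivation~(\ref{eq:D_descent})--(\ref{eq:projection_solution}) that projection onto $C_s$ exactly yields that minimizer. One also needs $D-H\succ 0$ to interpret $\|\cdot\|_{D-H}$ as a genuine seminorm with nonnegative values, which is guaranteed by the standing assumption $D\succ H$.
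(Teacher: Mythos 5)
Your proof is correct and follows essentially the same route as the paper: both arguments rest on the optimality of $x_{k+1}$ as the minimizer of the $D$-surrogate over $C_s$ (with $x_k$ feasible at surrogate value zero) combined with the $H$-smooth descent lemma, after which claims (2)--(4) follow by telescoping under boundedness of $f$ from below. You simply spell out the telescoping and the $\min \le$ average step that the paper leaves as ``direct consequences.''
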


\begin{proof}
Assuming $f$ is bounded below, statements 2, 3 and 4 are direct consequences of the first statement. To see the correctness of the first statement, adding a $\frac{1}{2}||x-y||_{D-H}^2$ term to both sides of the descent lemma with $H$ matrix (\ref{eq:H_stationary}) with $y = x_{k+1}$ and $x = x_k$ gives
\begin{equation}
\label{eq:iwht_descent_lemma}
\begin{array}{ll@{}ll}
f(x_{k+1}) + \frac{1}{2}||x_{k+1}-x_k||_{D-H}^2 &\leq f(x_k) + \nabla f(x_k)^T(x_{k+1}-x_k) + \frac{1}{2}||x_{k+1}-x_k||_D^2 \\ 
&\leq f(x_k).
\end{array}
\end{equation}
In the second line, we used the optimality of $x_{k+1}$, as the minimizer of the RHS of the first line. 
\end{proof}

These results are a slight generalization of \cite{amir_beck_sparse_opt} while more general results are analyzed in \cite{bregman_descent_phase_retrieval,bregman_descent,relatively_smooth_opt}, which follow from the same analysis using the Extended Descent Lemma with $Lh-f$ when $L > L_h$. 

Although it is not clear whether the $(x_k)$ sequence converges, the sequence might have at least some limit points. The next result shows that any limit point $\Bar{x}$ of $(x_k)$ satisfies $D$-stationarity. 

\begin{lemma}
\label{lemma:limit_d_stationary}
In the setting of Lemma \ref{lemma:subsequential_convergence}, any limit point of $(x_k)$ is $D$-stationary. 
\end{lemma}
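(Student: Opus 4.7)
The plan is to pass to the limit in the IWHT update relation. Specifically, write the algorithm step as
\begin{equation*}
D^{1/2}x_{k+1} \;\in\; \mathcal{P}_{C_s}\!\bigl(D^{1/2}x_k - D^{-1/2}\nabla f(x_k)\bigr),
\end{equation*}
and then send $k \to \infty$ along a subsequence converging to $\bar{x}$. The goal is to conclude that $D^{1/2}\bar{x} \in \mathcal{P}_{C_s}\bigl(D^{1/2}\bar{x} - D^{-1/2}\nabla f(\bar{x})\bigr)$, which is exactly the definition of $D$-stationarity.

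First, I would extract what the previous lemma gives us. Since $D \succ H$ implies $D-H \succ 0$, there is a constant $c>0$ such that $\|z\|_{D-H}^2 \geq c\|z\|_2^2$, so statement 3 of Lemma \ref{lemma:subsequential_convergence} yields $\|x_{k+1}-x_k\|_2 \to 0$. Let $\bar{x}$ be a limit point and choose a subsequence $x_{k_j} \to \bar{x}$. The step-size vanishing then forces $x_{k_j+1} \to \bar{x}$ as well, and continuity of $\nabla f$ gives $D^{1/2}x_{k_j} - D^{-1/2}\nabla f(x_{k_j}) \to D^{1/2}\bar{x} - D^{-1/2}\nabla f(\bar{x})$.

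The main obstacle is that $\mathcal{P}_{C_s}$ is a set-valued map onto a nonconvex set, and it is not continuous in the usual sense (ties in magnitude among coordinates of the argument can cause the selected support to jump). The key step is therefore to establish the outer-semicontinuity (closed graph) property of $\mathcal{P}_{C_s}$: if $v_j \to v$, $u_j \in \mathcal{P}_{C_s}(v_j)$, and $u_j \to u$, then $u \in \mathcal{P}_{C_s}(v)$. This is a short argument: for any fixed $u^\star \in \mathcal{P}_{C_s}(v)$ we have $\|u_j - v_j\|_2 \leq \|u^\star - v_j\|_2$; since $C_s$ is closed we get $u \in C_s$, and passing to the limit in the inequality yields $\|u - v\|_2 \leq \|u^\star - v\|_2$, which places $u$ in $\mathcal{P}_{C_s}(v)$.

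Applying this closed-graph property with $v_j = D^{1/2}x_{k_j} - D^{-1/2}\nabla f(x_{k_j})$ and $u_j = D^{1/2}x_{k_j+1}$ immediately gives
\begin{equation*}
D^{1/2}\bar{x} \in \mathcal{P}_{C_s}\!\bigl(D^{1/2}\bar{x} - D^{-1/2}\nabla f(\bar{x})\bigr),
\end{equation*}
and rearranging via $\bar{x} = D^{-1/2}(D^{1/2}\bar{x})$ gives the $D$-stationarity condition of the preceding definition, completing the proof.
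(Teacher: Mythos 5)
Your proof is correct, but it takes a genuinely different route from the paper's. The paper argues by contradiction through the descent lemma: if the limit point $\bar{x}$ were not $D$-stationary, there would exist a fixed $y \in C_s$ with $\nabla f(\bar{x})^T(y-\bar{x}) + \tfrac{1}{2}\|y-\bar{x}\|_D^2 = -\delta < 0$; by continuity this strict model decrease persists near $\bar{x}$, so each pass of the subsequence through a neighborhood of $\bar{x}$ forces $f$ to drop by at least $\delta/2$, contradicting the convergence of the monotone sequence $f(x_k)$. You instead pass to the limit directly in the update inclusion, using the vanishing step sizes (correctly deduced from $D-H \succ 0$ and part 3 of Lemma \ref{lemma:subsequential_convergence}), continuity of $\nabla f$, and the outer semicontinuity (closed graph) of $\mathcal{P}_{C_s}$, whose short justification via $\|u_j - v_j\|_2 \leq \|u^\star - v_j\|_2$ and closedness of $C_s$ is sound. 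Your argument is arguably cleaner here because it lands exactly on the set-inclusion form of the $D$-stationarity definition without detouring through the variational characterization; its one limitation is that it leans on the explicit projection structure of the weighted-$\ell_2$ update, whereas the paper's contradiction template transfers verbatim to the cyclic variant (Theorem \ref{lemma:d_stationary_cyclic}) and to the general separable Bregman setting in the appendix, where the update is no longer a Euclidean projection.
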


\begin{proof}
Assume to the contrary, a limit point $\Bar{x}$ is not $D$-stationary. Then $\exists y \in C_s$ such that
\begin{equation}
\label{eq:iwht_L_stationary}
\begin{array}{ll@{}ll}
\nabla f(\Bar{x})^T(y-\Bar{x}) + \frac{1}{2}(y-\Bar{x})^TD(y-\Bar{x}) = - \delta < 0. 
\end{array}
\end{equation}
Consider a subsequence $(x_{n_k})$ that converges to $\Bar{x}$. When $(x_{n_k})$ is close enough to $\Bar{x}$, the previous inequality will be preserved. Then using the descent lemma with the previous equation for large enough $k$ we have
\begin{equation}
\label{eq:iwht_L_stationary2}
\begin{array}{ll@{}ll}
f(x_{n_k+1}) \leq f(x_{n_k}) +  \nabla f(x_{n_k})^T(x_{n_k+1}-x_{n_k}) + \frac{1}{2}(x_{n_k+1}-x_{n_k})^TH(x_{n_k+1}-x_{n_k}) \\
f(x_{n_k+1}) \leq f(x_{n_k}) - \frac{1}{2}||x_{n_k+1}-x_{n_k}||_{D-H}^2 - \delta/2.
\end{array}
\end{equation}
This shows that around $\Bar{x}$, after applying the gradient iteration, the objective decreases at least by some positive amount $\delta/2$. However, this result contradicts the objective value convergence from Lemma \ref{lemma:subsequential_convergence}.
\end{proof}


Since our goal was to identify new descent conditions beyond the gradient descent with $\Ls$-stationarity, we could use multiple $D_i$'s say in a cyclic form. Let $\mathcal{D} = \cup_i \{D_i\}$ denote the set of matrices to apply the proximal gradient descent. This idea gives rise to the Cyclic IWHT Algorithm \ref{alg:ciwht}.

\begin{algorithm}[ht]
\caption{Cyclic Iterative Weighted Hard Thresholding (CIWHT)}
\label{alg:ciwht}
\begin{algorithmic}[1]
\STATE{\textbf{Input:} $\mathcal{D}, x_{0,0} \in C_s $}
\FOR{$k = 1,2, ...,$}
    \STATE{$x_{k,0} = x_{k-1,m}$}
        \FOR{$m = 1,2,.., |\mathcal{D}|$}
        \STATE{$y_{k,m} = \mathcal{P}_{C_s}(D_{m}^{\frac{1}{2}}x_{k,m-1}-D_{m}^{-\frac{1}{2}}\nabla f(x_{k,m-1}))$}
        \STATE{$x_{k,m} = D_{m}^{-\frac{1}{2}}y_{k,m}$}
        \ENDFOR
\ENDFOR
\end{algorithmic}
\end{algorithm}

\begin{theorem}
\label{lemma:d_stationary_cyclic}
Assume $f$ is $H_i$-stationary and $D_i \succ H_i$ for $i = 1, .., |\mathcal{D}|$. Any limit point of $\Bar{x}$ the sequence generated by the CIWHT algorithm $(x_{|\mathcal{D}|k + m}) = (x_{k,m}) $ for $ k \geq 1, 1 \leq m \leq |\mathcal{D}|$ is stationary with respect to all Bregman functions.
\end{theorem}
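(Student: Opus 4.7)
The plan is to adapt the single-Bregman argument of Lemma \ref{lemma:limit_d_stationary} to the cyclic setting, with the key new ingredient being that successive inner iterates within a cycle collapse together in the limit, so that any convergent subsequence yields a common limit for all positions of the cycle.

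First, I would apply Lemma \ref{lemma:subsequential_convergence} separately to each inner step of the CIWHT loop. Step $m$ of cycle $k$ is exactly an IWHT update with the pair $(H_m, D_m)$, so the same descent inequality gives
\begin{equation*}
f(x_{k,m}) \leq f(x_{k,m-1}) - \tfrac{1}{2}\|x_{k,m}-x_{k,m-1}\|_{D_m - H_m}^{2}.
\end{equation*}
Assuming $f$ is bounded below and using $x_{k,0} = x_{k-1,|\mathcal{D}|}$, the concatenated sequence $(f(x_{k,m}))$ is nonincreasing and convergent, and $\sum_{k,m}\|x_{k,m}-x_{k,m-1}\|_{D_m-H_m}^{2} < \infty$. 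In particular, for every fixed $m$, $\|x_{k,m}-x_{k,m-1}\|_{2} \to 0$ since $D_m - H_m \succ 0$. By the triangle inequality, for any two positions $m',m''$ in the cycle,
\begin{equation*}
\|x_{k,m'} - x_{k,m''}\|_{2} \leq \sum_{\ell=1}^{|\mathcal{D}|}\|x_{k,\ell}-x_{k,\ell-1}\|_{2} \;\longrightarrow\; 0.
\end{equation*}
Hence if $x_{k_j, m_j} \to \bar{x}$ along some subsequence, then $x_{k_j, m} \to \bar{x}$ for every fixed $m \in \{0,1,\dots,|\mathcal{D}|\}$ along the same subsequence of cycle indices $k_j$.

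Now fix an arbitrary $m \in \{1,\dots,|\mathcal{D}|\}$ and suppose, for contradiction, that $\bar{x}$ is not $D_m$-stationary. Then as in Lemma \ref{lemma:limit_d_stationary} there is a witness $y \in C_s$ with
\begin{equation*}
\nabla f(\bar{x})^{T}(y-\bar{x}) + \tfrac{1}{2}(y-\bar{x})^{T}D_m(y-\bar{x}) = -\delta < 0.
\end{equation*}
Since $x_{k_j, m-1}\to \bar x$ and $\nabla f$ is continuous, for all sufficiently large $j$ the same expression evaluated at $x_{k_j,m-1}$ is at most $-\delta/2$. Using the $H_m$-descent lemma at step $m$ and the optimality of $x_{k_j,m}$ for the $D_m$-majorized subproblem over $C_s$, which contains $y$,
\begin{equation*}
f(x_{k_j,m}) \leq f(x_{k_j,m-1}) + \nabla f(x_{k_j,m-1})^{T}(y-x_{k_j,m-1}) + \tfrac{1}{2}(y-x_{k_j,m-1})^{T}D_m(y-x_{k_j,m-1}) - \tfrac{1}{2}\|x_{k_j,m}-x_{k_j,m-1}\|_{D_m-H_m}^{2},
\end{equation*}
which is bounded above by $f(x_{k_j,m-1}) - \delta/2$ for $j$ large. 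This contradicts the convergence of the full sequence $(f(x_{k,m}))$, since the successive differences must tend to $0$. Therefore $\bar{x}$ is $D_m$-stationary, and since $m$ was arbitrary the conclusion holds simultaneously for every Bregman function in $\mathcal{D}$.

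The main obstacle, compared with the single-matrix case of Lemma \ref{lemma:limit_d_stationary}, is aligning subsequences so that the IWHT step to be contradicted is driven by the correct $D_m$. The vanishing inner-step displacement argument resolves this: it lets us slide along the cycle freely, so any limit point of the flattened sequence is automatically a limit point at every phase of the cycle, and the single-matrix contradiction can then be invoked verbatim once per $m$.
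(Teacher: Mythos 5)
Your proposal is correct and follows essentially the same route as the paper's proof: invoke the descent inequality of Lemma \ref{lemma:subsequential_convergence} for each pair $(H_m,D_m)$, use the vanishing of the within-cycle displacements to conclude that a limit point is attained at every phase of the cycle, and then run the contradiction argument of Lemma \ref{lemma:limit_d_stationary} once per matrix $D_m$. The paper states this only as a sketch, whereas you have usefully filled in the details (the triangle-inequality collapse of the cycle and the explicit $-\delta/2$ descent estimate), but the underlying argument is identical.
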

\begin{proof}
First of all, results of Lemma \ref{lemma:subsequential_convergence} apply in this context similarly by using $D_i \succ H_i$. Thus the distance of consecutive points converges to zero. The rest of the proof of this theorem is identical to Lemma \ref{lemma:limit_d_stationary}. We can find a convergent subsequence to some limit point $\Bar{x}$ and make the sequence sufficiently close to it so that after completing the cycle of inner $|\mathcal{D}|$ iterations, by also combining $||x_{k+1}-x_k||_{D_i-H_i} \to 0 \ \forall \, i = 1, ... , \mathcal{D}$ or $\sum_{i=1}^{\mathcal{D}}||x_{k+i}-x_{k+i-1}||_{D_i-H_i} \to 0$, the descent criterion in equation (\ref{eq:iwht_L_stationary}) is still strictly satisfied. Then, this creates a contradiction to objective value convergence as before.
\end{proof}

\subsection{Convergence of the Point Sequence}

Although we have subsequential convergence results from Lemma \ref{lemma:subsequential_convergence} and a result on the property of the limit points from Lemma \ref{lemma:limit_d_stationary}, convergence is not guaranteed for the whole sequence. Often a strong convexity-like assumption introduced in \cite{agarwal2010fast,bahmani2013greedy,Blumensath2012CompressedSW,high_dimensional_decomposable_regularizer,shalev2010trading,yuan_grad_hard_threshold} (for convergence analysis in a different context) is made to prove that the whole sequence converges \cite{amir_beck_sparse_opt,gpnp,nhtp}.

\begin{definition}
Function $f(x)$ is called s-Restricted Strongly Convex (RSC) if there exist $\sigma_s > 0$ such that
\begin{equation}
\label{eq:restricted_strong_convexity}
\begin{array}{ll@{}ll}
f(y) \geq f(x) + \nabla f(x)^T(y-x) + \frac{\sigma_s}{2}||x-y||_2^2 \quad  \forall x,y: ||x-y||_0 \leq s.
\end{array}
\end{equation}
\end{definition}
Intuitively, the s-RSC property means the function behaves like a strongly convex function between two points if they have the same sparsity pattern. We now present one version of proof of convergence of the $(x_k)$ sequence under restricted strong convexity assumption.

\begin{proposition}
\label{lemma:rsc_convergence}
In the setting of Lemma \ref{lemma:subsequential_convergence}, if additionally, $f$ satisfies s-RSC, then the entire sequence $(x_k)$ converges.
\end{proposition}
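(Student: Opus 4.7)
The plan is to strengthen Lemma \ref{lemma:subsequential_convergence} in three stages under the $s$-RSC assumption: establish boundedness of $(x_k)$, identify a common function value for all limit points, and then show this pins down a unique limit point so that the bounded sequence must converge. For boundedness, every $x_k \in C_s$ makes the difference $x_k - x_0$ at most $2s$-sparse, so RSC (taken with the parameter covering $2s$-sparse differences, which I would state as the working convention) applied with $x = x_0$, $y = x_k$, together with the monotone descent $f(x_k) \leq f(x_0)$ from Lemma \ref{lemma:subsequential_convergence}, gives $\tfrac{\sigma_s}{2}\|x_k - x_0\|_2^2 \leq -\nabla f(x_0)^T(x_k - x_0) \leq \|\nabla f(x_0)\|_2\,\|x_k - x_0\|_2$, hence $\|x_k - x_0\|_2 \leq 2\|\nabla f(x_0)\|_2/\sigma_s$.

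Next, monotonicity and boundedness of $f(x_k)$ yield $f(x_k) \to f^\ast$, and any limit point $\bar{x}$ satisfies $f(\bar{x}) = f^\ast$ by continuity and is $D$-stationary by Lemma \ref{lemma:limit_d_stationary}. For uniqueness, assume for contradiction that $\bar{x} \neq \tilde{x}$ are two such limit points. If $\|\bar{x}\|_0 < s$, Lemma \ref{lemma:D_stationary_point_characterization} gives $\nabla f(\bar{x}) = 0$, so $\bar{x}$ globally minimizes the convex $f$ and RSC forces $\|x_k - \bar{x}\|_2^2 \leq \tfrac{2}{\sigma_s}(f(x_k) - f^\ast) \to 0$, contradicting the existence of $\tilde{x}$. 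If $\|\bar{x}\|_0 = s$, I would argue by support stability: choose $\epsilon < \tfrac{1}{2}\min_{i \in I_1(\bar{x})}|\bar{x}_i|$ and use a subsequence $x_{n_k} \to \bar{x}$ to get $I_1(x_{n_k}) \supseteq I_1(\bar{x})$ for large $k$, hence $I_1(x_{n_k}) = I_1(\bar{x})$; combined with $\|x_{k+1} - x_k\|_2 \to 0$ from Lemma \ref{lemma:subsequential_convergence}, the support is locked on this active set from some point on, and on the corresponding $s$-dimensional subspace RSC makes $f$ strongly convex with $\bar{x}$ as its unique $D$-stationary point, forcing the tail to converge to $\bar{x}$. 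A bounded sequence with a unique limit point converges to it.

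The main obstacle is the uniqueness step. Plugging the two RSC inequalities at $\bar{x}$ and $\tilde{x}$ into the $D$-stationarity variational inequalities yields only $\|\tilde{x} - \bar{x}\|_D^2 \geq \sigma_s\|\tilde{x} - \bar{x}\|_2^2$, which is consistent with $\bar{x} \neq \tilde{x}$ as soon as $\lambda_{\max}(D) \geq \sigma_s$, so the argument cannot be closed by a pure inequality chase and must be routed through support stability. The delicate case is when the top-$s$ magnitudes of $D^{1/2}\bar{x} - D^{-1/2}\nabla f(\bar{x})$ are tied at the threshold, which may allow the projection $\mathcal{P}_{C_s}$ to switch indices under arbitrarily small perturbations; ruling this out rigorously seems to require either a strict-complementarity-type assumption in the thresholding step or a finer combinatorial argument exploiting the finiteness of $s$-supports.
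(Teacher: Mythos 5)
Your overall architecture (boundedness, a common limit value for all limit points, then convergence via support stability) matches the spirit of the paper's proof, and your treatment of the case $\|\bar{x}\|_0 < s$ is correct. But there is a genuine gap in the case $\|\bar{x}\|_0 = s$: the claim that the support is ``locked on this active set from some point on'' does not follow from $I_1(x_{n_k}) = I_1(\bar{x})$ along a subsequence together with $\|x_{k+1}-x_k\|_2 \to 0$. Those two facts only rule out a support change at the single step following $x_{n_k}$; afterwards the iterates could in principle drift so that some coordinate indexed by $I_1(\bar{x})$ decays slowly to zero and is eventually dropped by $\mathcal{P}_{C_s}$. You correctly sense the difficulty, but the fix you reach for (strict complementarity or a tie-breaking analysis of the thresholding step) is the wrong tool and is not needed.

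The missing idea, which is how the paper closes the argument, is to use RSC itself to convert function-value convergence into distance control, so that one never has to predict what the projection selects. Let $m$ be the first iteration after a large index $K$ at which the support changes. Then $x_{m-1}$ still has support $I_1(\bar{x})$, so $x_{m-1}-\bar{x}$ is $s$-sparse and RSC applies; since $\nabla_i f(\bar{x})=0$ for $i\in I_1(\bar{x})$ by $D$-stationarity (Lemma \ref{lemma:D_stationary_point_characterization}), the first-order term drops and one gets $f(x_{m-1}) \geq f(\bar{x}) + \sigma_s\|x_{m-1}-\bar{x}\|_2^2$ as in (\ref{eq:rsc_convergence}). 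On the other hand, a support change at step $m$ with $\|x_m-x_{m-1}\|$ small forces some coordinate $i\in I_1(\bar{x})$ of $x_{m-1}$ to be near zero, hence $\|x_{m-1}-\bar{x}\|_2 \geq \delta$ for a fixed $\delta>0$ determined by $\min_{i\in I_1(\bar{x})}|\bar{x}_i|$. Combining the two gives $f(x_{m-1}) \geq f(\bar{x}) + \sigma_s\delta^2$, which contradicts monotonicity of $f(x_k)$ together with $f(x_{n_k})\to f(\bar{x})$ once $K$ is large. So the support never changes after $K$, and the same RSC inequality then yields $\|x_k-\bar{x}\|_2^2 \leq (f(x_k)-f(\bar{x}))/\sigma_s \to 0$ directly, with no appeal to uniqueness of $D$-stationary points or to strong convexity of the restricted problem. (Your boundedness step, like the paper's, also silently needs the RSC inequality for $2s$-sparse differences since $x_k-x_0$ is only $2s$-sparse; you flag this convention explicitly, and the paper is equally informal on that point.)
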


\begin{proof}
From Lemma \ref{lemma:subsequential_convergence}, and s-RSC, the sequence $(x_k)$ is bounded and hence there exists a limit point $\Bar{x}$ that satisfies $D$-stationarity from Lemma \ref{lemma:limit_d_stationary}. For simplicity assume $||\Bar{x}||_0 = s$. To the contrary, assume that the entire sequence $(x_n)$ does not converge to $\Bar{x}$. Since $||x_k-x_{k+1}||_{D-H}^2 \to 0$, for large enough $k$, sparsity pattern of the sequence $(x_k)$ will not change immediately. Further, there exist a subsequence $(x_{n_k})$ that converges to $\Bar{x}$. Choose $k$ large enough, say $k \geq K$, so that both $f(x_{n_k})-f(\Bar{x}) < \epsilon_1$ and $||x_k-x_{k+1}||_{D-H}^2 < \epsilon_2$ are satisfied. Let $m$ be the first iteration after $K$ where the sparsity pattern changes. Then, writing the restricted strong convexity equation with respect $x_{m-1}$
\begin{equation}
\label{eq:rsc_convergence}
\begin{array}{ll@{}ll}
f(x_{m-1}) \geq f(\Bar{x}) + \nabla f(\Bar{x})^T(x_{m-1}-\Bar{x}) + \sigma_s ||x_{m-1}-\Bar{x}||_2^2 \\
f(x_{m-1}) \geq f(\Bar{x}) + \sigma_s ||x_{m-1}-\Bar{x}||_2^2
\end{array}
\end{equation}
In the second equation, we used the fact that they have the same sparsity pattern, therefore, both $x_{m-1}$ and $\Bar{x}$ are zero outside the same indices. Furthermore, wherever $\Bar{x}$ has a non-zero value, the corresponding gradient is zero by $D$-stationarity. Moreover, since in the next iteration sparsity pattern changes, and $||x_m-x_{m-1}||_{D-H}^2 < \epsilon_1$, points $\Bar{x}$ and $x_{m}$ are separated by at least some distance, i.e $||x_{m-1}-\Bar{x}||_{D-H}^2 \geq \delta > 0 $. This creates a contradiction to $f(x_{n_k})-f(\Bar{x}) < \epsilon_1$. Similarly, if the sparsity pattern does not change, points in the sequence $(x_k)$ cannot stay outside of some ball infinitely often with the same reasoning. Thus the sequence $(x_k)$ converges to $\Bar{x}$.


If $||\Bar{x}|| < s$ this case can be argued similarly since $(x_k)$ sequence is kept $s$-sparse throughout iterations, we can write the same equation by also using $\nabla f(\Bar{x}) = 0$.
\end{proof}

In \cite{bregman_descent_phase_retrieval}, it was shown that if  $f$ satisfies Kurdyka-\L{}ojasiewicz (K\L) property then the sequence $(x_n)$  also converges for gradient descent like sequences. The growth condition given in \cite{proximal_alternating_linearized} shows that uniformly convex or in particular strongly convex functions satisfy this property. These results could also be used to assert convergence of the IWHT algorithm. However, we note that the assumption of $s$-RSC used in Proposition \ref{lemma:rsc_convergence} is a weaker notion. We further extend this result to our setting without using KL property or restricted strong convexity criterion.


\begin{theorem}
\label{thm:L_convergence}
In the setting of Lemma \ref{lemma:subsequential_convergence}, assume the sequence $(x_k)$ has a limit point $\Bar{x}$. Then the entire sequence converges to $\Bar{x}$.
\end{theorem}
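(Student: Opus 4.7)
The plan is to exploit the combinatorial structure of $C_s$ near $\bar x$ together with a Fej\'er-monotonicity argument on a fixed-support subspace. First I would split into two cases based on the sparsity of $\bar x$. When $\|\bar x\|_0 < s$, Lemma~\ref{lemma:D_stationary_point_characterization} forces $\nabla f(\bar x)=0$, so convexity makes $\bar x$ a global minimizer of $f$; a direct Fej\'er estimate, using the optimality of $x_{k+1}$ against the candidate $\bar x \in C_s$ in the IWHT subproblem combined with $f(x_{k+1}) \geq f(\bar x)$, yields $\|x_{k+1}-\bar x\|_D \leq \|x_k-\bar x\|_D$, and convergence of the whole sequence then follows from the existence of a subsequence already converging to $\bar x$.

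The main case is $\|\bar x\|_0 = s$. Set $S^{*}=I_1(\bar x)$ and $\alpha = \min_{i\in S^{*}}|\bar x_i|>0$. I would pick a radius $\delta\in(0,\alpha/2)$ small enough that for every $x\in B(\bar x,\delta)\cap C_s$ two properties hold: (a) $I_1(x)=S^{*}$, because coordinates of $\bar x$ in $S^{*}$ exceed $\alpha$ while those outside vanish, so an $s$-sparse perturbation of Euclidean size less than $\delta$ cannot swap any index into or out of the support; and (b) the hard-thresholding projection in Algorithm~\ref{alg:iwht} applied to $x$ again selects $S^{*}$. For (b), inspect $z=D^{1/2}x - D^{-1/2}\nabla f(x)$ coordinate by coordinate: at $\bar x$, $D$-stationarity (Lemma~\ref{lemma:D_stationary_point_characterization}) gives $\max_{j\notin S^{*}} D_{jj}^{-1/2}|\nabla_j f(\bar x)| \leq M_s(D^{1/2}\bar x) = \min_{i\in S^{*}} D_{ii}^{1/2}|\bar x_i|$, which is a strict inequality under a mild nondegeneracy assumption, and continuity of $\nabla f$ transports this strict inequality to a neighborhood.

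With support pinned to $S^{*}$ along the orbit, the IWHT update reduces to preconditioned gradient descent $(x_{k+1})_{S^{*}} = (x_k)_{S^{*}} - D_{S^{*}}^{-1}\nabla_{S^{*}} f(x_k)$ on $\mathbb{R}^{S^{*}}$, and by $D$-stationarity $\bar x$ minimizes the convex restriction of $f$ to this subspace. Combining this with the one-step descent bound $\nabla_{S^{*}} f(x_k)^{T} D_{S^{*}}^{-1}\nabla_{S^{*}} f(x_k) \leq 2(f(x_k)-f(x_{k+1}))$, which comes from the $D$-smooth descent lemma underlying Lemma~\ref{lemma:subsequential_convergence}, yields the Fej\'er estimate
\[
\|x_{k+1}-\bar x\|_D^2 \;\leq\; \|x_k-\bar x\|_D^2 + 2\bigl(f(\bar x) - f(x_{k+1})\bigr) \;\leq\; \|x_k-\bar x\|_D^2 .
\]
Since $\bar x$ is a limit point there is some $K$ with $x_K \in B(\bar x,\delta)$ and $\|x_K-\bar x\|_D$ arbitrarily small; the support-preservation argument gives $x_k \in B(\bar x,\delta)$ with support $S^{*}$ for all $k \geq K$, Fej\'er monotonicity makes $\|x_k-\bar x\|_D$ nonincreasing, and a nonincreasing sequence with a subsequential limit of zero must itself tend to zero.

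The main obstacle I anticipate is step~(b): guaranteeing that the hard-thresholding projection genuinely selects the support $S^{*}$ throughout a neighborhood of $\bar x$. The argument requires a strict separation between the $s$-th and $(s+1)$-th largest magnitudes of $D^{1/2}\bar x - D^{-1/2}\nabla f(\bar x)$; in the degenerate tie case the projection is multi-valued at $\bar x$ and the argument needs either a small nondegeneracy assumption on $\bar x$, or a finer counting argument showing that the alternative support cannot be adopted infinitely often without violating both $f(x_k) \to f(\bar x)$ and $\|x_{k+1}-x_k\|_{D-H} \to 0$.
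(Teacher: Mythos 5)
Your treatment of the case $\norm{\Bar{x}}_0=s$ is essentially the paper's argument: identify the support near $\Bar{x}$, observe that IWHT collapses to preconditioned gradient descent $z_{k+1}=z_k-D_{S^*}^{-1}\nabla\Bar{f}(z_k)$ on the fixed subspace, and show the $D$-weighted distance to $\Bar{z}$ is nonincreasing. Your route to the distance decrease (convexity of $f$ plus the one-step descent bound $\norm{\nabla_{S^*}f(x_k)}_{D_{S^*}^{-1}}^2\le 2(f(x_k)-f(x_{k+1}))$ and $f(x_{k+1})\ge f(\Bar{x})$) is a clean variant of the paper's co-coercivity inequality in (\ref{eq:D_convergence3}); both work. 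Your worry about ties in step (b) is also a non-issue and needs no nondegeneracy assumption: since $\norm{x_{k+1}-x_k}_{D-H}\to 0$ by Lemma \ref{lemma:subsequential_convergence} and $x_{n_k}\to\Bar{x}$, the iterate $x_{n_k+1}$ is eventually within $\alpha/2$ of $\Bar{x}$; an $s$-sparse vector that close to a vector with exactly $s$ nonzero entries, each of magnitude at least $\alpha$, must have support exactly $S^*$. You never need the thresholded vector $D^{1/2}x-D^{-1/2}\nabla f(x)$ to select $S^*$ by a strict margin --- the "finer counting argument" you sketch as a fallback is in fact the paper's mechanism and it suffices.

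The genuine gap is the case $\norm{\Bar{x}}_0<s$, which you dispatch with a claimed "direct Fej\'er estimate" from the optimality of $x_{k+1}$ in the IWHT subproblem against the candidate $\Bar{x}\in C_s$. For a \emph{convex} feasible set this would follow from the three-point (Pythagorean) inequality for the projection, but $C_s$ is nonconvex and hard thresholding is not nonexpansive, so that inequality fails. What the value comparison actually yields, after combining with the descent lemma and convexity, is $f(x_{k+1})-f(\Bar{x})\le\tfrac{1}{2}\norm{x_k-\Bar{x}}_D^2$, which is not a statement that $\norm{x_{k+1}-\Bar{x}}_D\le\norm{x_k-\Bar{x}}_D$. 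This case is also precisely where the support cannot be pinned down: one only gets $I_1(x_k)\supseteq I_1(\Bar{x})$, and the complementary indices may keep changing indefinitely, so there is no reduction to fixed-support gradient descent either. The paper handles this by an explicit block decomposition $S_1=I_1(x_{n_k})\cap I_1(x_{n_k+1})$, $S_2=I_1(x_{n_k})\setminus I_1(x_{n_k+1})$, $S_3=I_1(x_{n_k+1})\setminus I_1(x_{n_k})$, expanding $\norm{x_{n_k+1}-\Bar{x}}_{D}^2$ term by term and completing squares, where the cross terms are controlled only because $\nabla f(\Bar{x})=0$ on the whole of $S=S_1\cup S_2\cup S_3$ (equations (\ref{eq:D_convergence4})--(\ref{eq:D_convergence6})). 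Some argument of this kind --- establishing the distance decrease by hand \emph{across} support changes using $\nabla f(\Bar{x})=0$ --- is needed to close your proof; the subproblem-optimality shortcut does not supply it.
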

\begin{proof}
Consider the case when $||\Bar{x}||_0 = s$. Consider a subsequence $(x_{n_k})$ that converges to $\Bar{x}$. As in the proof of the previous theorem, using Lemma \ref{lemma:subsequential_convergence}, for large enough $k$ we have $||x_k-x_{k+1}||_{D-H}^2 < \epsilon_1$ and $||\Bar{x}-x_{n_k}|| < \epsilon_2$. Choosing $\epsilon_1$ and $\epsilon_2$ sufficiently small, the next iterate $(x_{n_k+1})$ in the IWHT algorithm will have the same sparsity pattern as $x_{n_k}$ and hence $\Bar{x}$, i.e $I_1(x_{n_k+1}) = I_1(x_{n_k}) = I_1(\Bar{x}) = S$. Then, define the following subvectors which are all in the same space $\R^s$;
\begin{equation}
\label{eq:D_convergence1}
\begin{array}{ll@{}ll}
z_{n_k} = (x_{n_k})_{S} \\
z_{n_k+1} = (x_{n_k+1})_{S} \\
\Bar{z} = (\Bar{x})_{S} \\ 
\Bar{f}(z) = f(x), \quad x_S = z, x_{S^C} = 0 \\
\nabla \Bar{f}(z) = \nabla_S f(x),
\end{array}
\end{equation}
where we also defined a restricted function $\Bar{f}$, which takes an input in $\R^s$ and is equal to function evaluated at $f(x)$ where $x$ is zero-padded version of $z$ with the sparsity pattern $S$. Clearly, the restricted function $\Bar{f}$ is $D_S$ smooth. Then analyzing the next iterate in the restricted space gives;
\begin{equation}
\label{eq:D_convergence2}
\begin{array}{rlll}
x_{n_k+1} &= D^{-1/2}\mathcal{P}_{C_s}(D^{1/2}x_{n_k}-D^{-\frac{1}{2}}\nabla f(x_{n_k})) \\
\implies z_{n_k+1} &= z_{n_k} - D_S^{-1}\nabla \Bar{f}(z_{n_k}).
\end{array}
\end{equation}
Therefore, $z_{n_k+1}$ reduces to a gradient descent iteration with weighted $\ell_2$ norm Lipschitz smoothness criterion. It is well known that in a gradient descent sequence, the distance to the optimal point decreases when the gradient is smooth,
\begin{equation}
\label{eq:D_convergence3}
\begin{array}{llll}
||z_{n_k+1}-\Bar{z}||_{D_S}^2 &= ||z_{n_k} - D_S^{-1}\nabla \Bar{f}(z_{n_k}) - \Bar{z}||_{D_S}^2 \\
& = ||z_{n_k} - \Bar{z}||_{D_S}^2 - 2 \nabla \Bar{f}(z_{n_k})^T(z_{n_k} - \Bar{z}) + ||D_S^{-1}\nabla \Bar{f}(z_{n_k})||_{D_S}^2 \\
& \leq ||z_{n_k} - \Bar{z}||_{D_S}^2 - 2 ||\nabla\Bar{f}(z_{n_k}) ||_{D_S^{-1}}^2 + ||\nabla \Bar{f}(z_{n_k})||_{D_S^{-1}}^2 \\
& \leq ||z_{n_k} - \Bar{z}||_{D_S}^2 - ||\nabla\Bar{f}(z_{n_k}) ||_{D_S^{-1}}^2.
\end{array}
\end{equation}
In the second line, we used the fact that Lipschitz continuity of   $\Bar{f}$ implies that $\nabla \Bar{f}(z_{n_k})^T(z_{n_k} - \Bar{z}) \geq || \nabla \Bar{f} ||_{D_S^{-1}}$ \cite{first_order_amir_beck}. In the third line, we used $||A^{-1}x||_A = ||x||_{A^{-1}}$. Then, in the next iteration, $z_{n_k+1}$ is even closer to $\Bar{z}$ and similarly for $x_{n_k+1}$ and $\Bar{x}$ since they are non-zero on the same sparsity pattern and zero elsewhere. Then, for the rest of the iterations, IWHT will never change the sparsity pattern and will keep doing gradient descent with respect to $D$ norm in the restricted space i.e. the identified sparsity pattern. With the same argument, from the convergence of gradient descent, it is evident that the sequence $(x_k)$ converges to $(\Bar{x})$.

If $\Bar{x} < s$, by Lemmas \ref{lemma:limit_d_stationary} and \ref{lemma:D_stationary_point_characterization} we have that $\nabla f(\Bar{x}) = 0$. Firstly, since the algorithm will keep $s$-sparse iterates, and $x_{n_k}$ close enough to $\Bar{x}$ we have $I_1(x_{n_k}) \supset I_1(\Bar{x})$. Then two things can happen after an IWHT iteration when $x_{n_k}$ is sufficiently close to $\Bar{x}$. If $I_1(x_{n_k}) = I_1(x_{n_k+1})$, with the same gradient argument above $x_{n_k+1}$ gets even closer to $\Bar{x}$. If $I_1(x_{n_k}) \neq I_1(x_{n_k+1})$, we must still have that $I_1(x_{n_k+1}) \supset I_1(\Bar{x})$ from the third result in Lemma \ref{lemma:subsequential_convergence}. Then the distance to the optimal solution has the following relationship;
\begin{equation}
\label{eq:D_convergence4}
\begin{array}{rlll}
||
\begin{bmatrix}
z^+  \\
0 \\
-D_{S_3}^{-1}\nabla_{S_3} f(z)  \\
\end{bmatrix}
-
\begin{bmatrix}
\Bar{z} \\
0 \\
0  \\
\end{bmatrix}
||_{D_S}^2
= 
&||
\begin{bmatrix}
z  \\
u \\
0  \\
\end{bmatrix}
-
\begin{bmatrix}
D_{S_1}^{-1}\nabla_{S_1} \Bar{f}(z) \\
u \\
D_{S_3}^{-1}\nabla_{S_3} \Bar{f}(z)  \\
\end{bmatrix}
-
\begin{bmatrix}
\Bar{z} \\
0 \\
0  \\
\end{bmatrix}
||_{D_S}^2 \\
= & ||
\begin{bmatrix}
z  \\
u \\
0  \\
\end{bmatrix}
-
\begin{bmatrix}
\Bar{z} \\
0 \\
0  \\
\end{bmatrix}
||_{D_S}^2
- 2 \nabla_{S} \Bar{f}(z) ^T\begin{bmatrix}
z-\Bar{z} \\
u \\
0  \\
\end{bmatrix} \\ 
&-2 ||u||_{D_{S_2}}^2 + 2 \nabla_{S_2} \Bar{f}(z) ^Tu  \\
& + ||\nabla_{S_1} \Bar{f}(z) ||_{D_{S_1}^{-1}}^2 + ||u||_{D_{S_2}}^2 + ||\nabla_{S_3} \Bar{f}(z)||_{D_{S_3}^{-1}}^2

\end{array}
\end{equation}
where $S_1 = I_1(x_{n_k}) \cap I_1(x_{n_k+1}),S_2 = I_1(x_{n_k}) \setminus I_1(x_{n_k+1}), S_3 = I_1(x_{n_k+1}) \setminus I_1(x_{n_k})$, the sparsity patterns of the vectors and $S = S_1 \cup S_2 \cup S_3$ is their union, $z^+ = (x_{n_k+1})_{S_1}, z = (x_{n_k})_{S_1}, \Bar{z} =  (\Bar{x})_{S_1}$ are the vectors restricted to respective sparsity patterns and $ v = (x_{n_k+1})_{S_3} = -D_{S_3}^{-1}\nabla_{S_3} \Bar{f}(z), u = (x_{n_k})_{S_2}$ are the non-zero values of the vectors that are not in the common sparsity pattern. The restricted function $\Bar{f}$ is defined as zero-padded version of $x \in \R^{|S|}$.  We used the fact that $u = D_{S_2}^{-1}\nabla_{S_2} \Bar{f}(z) - D_{S_2}^{-1}\nabla_{S_2} \Bar{f}(z) +u$ and expanded the terms accordingly to get the inner product of the weighted gradient with $z-\Bar{z}$. Now using smoothness of the restricted function $\Bar{f}$ with respect to $D_S$ we get
\begin{equation}
\label{eq:D_convergence5}
\begin{array}{rlll}
2 \nabla_{S} \Bar{f}(z) ^T\begin{bmatrix}
z-\Bar{z} \\
u \\
0  \\
\end{bmatrix}
&\geq 2||\nabla_{S} \Bar{f}(z)||_{D_{S}^{-1}}^2 \\
&= 2||\nabla_{S_1} \Bar{f}(z) ||_{D_{S_1}^{-1}}^2 + 2||\nabla_{S_2} \Bar{f}(z) ||_{D_{S_2}^{-1}}^2 + 2||\nabla_{S_3}\Bar{f}(z) ||_{D_{S_3}^{-1}}^2
\end{array}
\end{equation}
This inequality in the restricted space of $S$ works because $\nabla f(\Bar{x}) = 0$. Plugging this inequality back gives
\begin{equation}
\label{eq:D_convergence6}
\begin{array}{rlll}
||
\begin{bmatrix}
z^+  \\
0 \\
-D_{S_3}^{-1}\nabla_{S_3} \Bar{f}(z)  \\
\end{bmatrix}
-
\begin{bmatrix}
\Bar{z} \\
0 \\
0  \\
\end{bmatrix}
||_{D_S}^2
\leq & 

||
\begin{bmatrix}
z  \\
u \\
0  \\
\end{bmatrix}
-
\begin{bmatrix}
\Bar{z} \\
0 \\
0  \\
\end{bmatrix}
||_{D_S}^2
- ||u||_{D_{S_2}}^2 + 2 \nabla_{S_2} \Bar{f}(z) ^Tu \\
-&||\nabla_{S_1} \Bar{f}(z) ||_{D_{S_1}^{-1}}^2 - 2||\nabla_{S_2}\Bar{f}(z)||_{D_{S_2}^{-1}}^2 \\
-&||\nabla_{S_3} \Bar{f}(z) ||_{D_{S_3}^{-1}}^2 \\
= & ||
\begin{bmatrix}
z  \\
u \\
0  \\
\end{bmatrix}
-
\begin{bmatrix}
\Bar{z} \\
0 \\
0  \\
\end{bmatrix}
||_{D_S}^2
- ||u-D_{S_2}^{-1}\nabla_{S_2} \Bar{f}(z)||_{D_{S_2}}^2 \\
-& ||\nabla_{S} \Bar{f}(z)||_{D_{S}^{-1}}^2
\end{array}
\end{equation}
In the second line we completed the squares $-||u||_{D_{S_2}}^2 + 2 \nabla_{S_2} \Bar{f}(z) ^Tu - ||\nabla_{S_2} \Bar{f}(z)||_{D_{S_2}^{-1}}^2 $. 

The last inequality implies that even when the sparsity pattern changes, the distance of the next iterate to the $\Bar{z}$ is smaller than the previous one. This argument will repeat itself in the next iteration without using the subsequence property, proving convergence of $x_k$ to $\Bar{x}$. 
\end{proof}

If $f$ satisfies any growth condition to imply that $(x_k)$ will remain in a bounded set then the convergence of the sequence is guaranteed. Merely assuming that $f(x)$ is coercive is enough for this purpose. Additionally, this can be sharpened as follows;
\begin{equation}
\label{eq:sparse_coercive}
\begin{array}{ll@{}ll}
\forall N > 0, \exists M_N > 0 \quad \text{s.t.} \\
||x|| > M_N \implies f(x_S)  > N \quad \forall ||x||_0 \leq s.
\end{array}
\end{equation}
This condition essentially asserts that $f$ is s-sparse coercive, i.e. for all sparse vectors, $f$ is coercive. Nevertheless, without assuming a limit point of the sequence or any growth condition on $f(x)$ convergence of the sequence $(x_k)$ is not ensured. The next result shows that this could only happen in a very artificial case. When it happens, the algorithm generates an approximately global optimum whenever it changes the sparsity pattern of the $(x_k)$. 

\begin{theorem}
\label{thm:sparsity_pattern_change}
In the setting of Lemma \ref{lemma:subsequential_convergence}, if the number of sparsity pattern changes in $(x_k)$ is finite, then the sequence $(x_k)$ converges to some optimal point in the last sparsity pattern. If the number of sparsity pattern changes is infinite, then the magnitude of the gradient at the points where the sparsity pattern changes goes to zero. 
\end{theorem}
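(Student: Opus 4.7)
The plan is to handle the two scenarios separately. \textbf{Case 1 (finitely many sparsity pattern changes).} I would let $K$ be an iteration after which the sparsity pattern is fixed, say $I_1(x_k) = S$ for every $k \geq K$. For these iterations $\mathcal{P}_{C_s}$ acts as the identity on the $|S|$ largest‐magnitude coordinates of its input, so the IWHT recursion reduces on the subspace indexed by $S$ to the preconditioned gradient step
\begin{equation*}
(x_{k+1})_S = (x_k)_S - D_S^{-1}\nabla_S f(x_k),
\end{equation*}
which is ordinary gradient descent in the $D_S$-weighted geometry on the restricted function $\bar{f}(z) := f(x)$ with $x_S=z$ and $x_{S^c}=0$. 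Because $\bar{f}$ is convex and smooth relative to $\tfrac{1}{2}\|\cdot\|_{D_S}^2$, the classical convergence result for smooth convex gradient descent then yields convergence of $(x_k)_S$, and hence of $(x_k)$, to a minimizer of $\bar{f}$, i.e.\ an optimal point within the last sparsity pattern.

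For \textbf{Case 2 (infinitely many sparsity pattern changes)}, let $(k_m)$ index the iterations at which $I_1(x_{k_m}) \neq I_1(x_{k_m+1})$. From Lemma~\ref{lemma:subsequential_convergence} I would use $\|x_{k+1}-x_k\|_{D-H}^2 \to 0$, and since $D-H$ is diagonal and positive definite this is equivalent to the coordinatewise statement $|(x_{k+1})_l-(x_k)_l|\to 0$. I would then read the IWHT update index by index, splitting the coordinates into four groups: (i) $l\in I_1(x_{k_m})\cap I_1(x_{k_m+1})$, where the update is exactly $(x_{k_m+1})_l = (x_{k_m})_l - D_{ll}^{-1}\nabla_l f(x_{k_m})$, so $|\nabla_l f(x_{k_m})|\to 0$; (ii) $l$ newly introduced, where $(x_{k_m})_l=0$ and $(x_{k_m+1})_l = -D_{ll}^{-1}\nabla_l f(x_{k_m})$, giving the same conclusion; (iii) $l$ newly dropped, where $(x_{k_m})_l\to 0$ and the projection‐input magnitude $|D_{ll}^{1/2}(x_{k_m})_l - D_{ll}^{-1/2}\nabla_l f(x_{k_m})|$ lies at or below the top-$s$ threshold $T_{k_m}$; and (iv) $l$ outside both supports, where $|D_{ll}^{-1/2}\nabla_l f(x_{k_m})|\le T_{k_m}$.

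The main obstacle is then to show that $T_{k_m}\to 0$, which closes cases (iii) and (iv); I would argue this in two sub-cases. In the generic situation where some coordinate $j$ is newly introduced, $j$ belongs to the top-$s$ of the projection input, so $T_{k_m} \leq |D_{jj}^{-1/2}\nabla_j f(x_{k_m})|$, and the right‐hand side already tends to zero by case (ii). In the degenerate situation where the support strictly shrinks without any introduction, the projection output must have fewer than $s$ nonzeros, which forces the corresponding entries of $D^{1/2}x_{k_m}-D^{-1/2}\nabla f(x_{k_m})$ to vanish exactly, so $T_{k_m}=0$; combined with $(x_{k_m})_l\to 0$ on the dropped indices this again yields $|\nabla_l f(x_{k_m})|\to 0$ there. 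Patching the four groups together then gives $\|\nabla f(x_{k_m})\|_2\to 0$, as claimed.
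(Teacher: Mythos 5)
Your argument is correct and follows essentially the same route as the paper: the finite-change case reduces to preconditioned gradient descent on the fixed support, and in the infinite-change case both proofs use Lemma~\ref{lemma:subsequential_convergence} to force the gradient on $I_1(x_{k_m})$ to vanish and then exploit the top-$s$ selection rule of the projection (via the newly introduced index) to transfer this smallness to $I_0(x_{k_m})$. Your coordinatewise bookkeeping is in fact slightly more complete than the paper's: the display \eqref{eq:sparsity_pattern_change2} tacitly assumes $I_1(x_{n_k+1})\setminus I_1(x_{n_k})\neq\emptyset$, whereas you also dispose of the pure-shrinkage case by observing that the top-$s$ threshold is then exactly zero.
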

\begin{proof}
The first part of the theorem follows from the gradient descent argument in the restricted space. Assume now that the number of sparsity pattern changes is infinite. Consider the subsequence of points where the sparsity pattern changes. Then  using Lemma \ref{lemma:subsequential_convergence} and plugging the candidate solution $x_{S} = x_{n_k}-D_{S}^{-1}\nabla_{S}f(x_{n_k}), x_{S^C} = 0$ where $S = I_1(x_{n_k})$, for the choice of $x_{n_k+1}$ in equation (\ref{eq:iwht_descent_lemma}) instead of $x_{n_k}$ gives
\begin{equation}
\label{eq:sparsity_pattern_change}
\begin{array}{llll}
f(x_{k+1}) + \frac{1}{2}||x_{n_k} - x_{n_k+1}||_{D-H} \leq  f(x_k) - \frac{1}{2}||\nabla_{S} f(x_{n_k})||_{D_{S}^{-1}} \\
\implies \frac{1}{2}||\nabla_{S} f(x_{n_k})||_{D_{S}^{-1}} + \frac{1}{2}||x_{n_k} - x_{n_k+1}||_{D-H} \leq f(x_k) - f(x_{k+1}) \\
\implies \frac{1}{2}||\nabla_{S} f(x_{n_k})||_{D_{S}^{-1}} + \frac{1}{2}||x_{n_k} - x_{n_k+1}||_{D-H} \leq \epsilon.
\end{array}
\end{equation}
In the third line we used the objective function value convergence result from Lemma \ref{lemma:subsequential_convergence}. This shows that at the iteration where the sparsity pattern changes, the norm of the gradient restricted to the sparsity pattern $S$ gets arbitrarily small. Since the sparsity pattern changed we can also bound the gradient norm in the complement of $S$ by discarding the first term on the left-hand side. For any $i \in I_1(x_{n_k+1}) \setminus I_1(x_{n_k})$, we have the following
\begin{equation}
\label{eq:sparsity_pattern_change2}
\begin{array}{rlll}
\dfrac{(\nabla_i f)^2}{d_i^2} (d_i-h_i) & \leq \epsilon \quad &\forall i \in I_1(x_{n_k+1}) \setminus I_1(x_{n_k}) \\
\dfrac{(\nabla_i f)^2}{d_i^2} &\leq C_1 \epsilon \quad  &\forall i \in I_1(x_{n_k+1}) \setminus I_1(x_{n_k}) \\
\dfrac{(\nabla_i f)^2}{d_i} &\leq C_1 C_2 \epsilon \quad &\forall i \in I_1(x_{n_k+1}) \setminus I_1(x_{n_k}) \\
\implies \underset{j\in I_0(x_{n_k})}\max \dfrac{(\nabla_j f)^2}{d_j}&\leq  C_1 C_2 \epsilon \\
\implies \underset{j\in I_0(x_{n_k})}\max (\nabla_j f)^2 &\leq C_1 C_2^2 \epsilon,
\end{array}
\end{equation}
where $C_1$ is a uniform lower bound on $d_i-h_i$ and $C_2$ is a uniform upper bound on $d_i$. In the first, second, and last lines, we use the lower and upper bounds to uniformize the bound. Going from the third to the fourth line, we use the fact that when the sparsity pattern changes, newly chosen sparsity patterns will be chosen with the maximum ratio of $\dfrac{(\nabla_i f)^2}{d_i}$ as given by the algorithm (\ref{eq:bregman_descent_seperable_solution}) or (\ref{eq:D_descent_solution}). This means that the norm of the gradient on $I_0(x_{n_k})$ becomes arbitrarily small. 

Combining both (\ref{eq:sparsity_pattern_change}) and (\ref{eq:sparsity_pattern_change2}) shows that gradient magnitude  $||\nabla f(x_{n_k})||_2$ can be made arbitrarily small. In other words, for any $\epsilon > 0$, there exists $m \in \mathbb{N}$ such that $||\nabla f(x_m)||_2 \leq \epsilon$. 
\end{proof}

Note that in this artificial case, we also need the norm of the $||x_k||$ to diverge to infinity because otherwise, the existence of a convergent subsequence implies convergence of the entire sequence as before. 

While we can show that in the case where there are infinitely many sparsity pattern changes, the IWHT algorithm generates points with arbitrarily small gradient norms from time to time while also maintaining a monotonically decreasing objective, it is not immediate that we have convergence to any point without an additional assumption like boundedness of $(x_k)$.

For the CIWHT Algorithm, we cannot use the technique used in the proof of Theorem \ref{thm:L_convergence} to prove that if a limit point exists then the entire sequence converges to that point. Instead, we can argue using Theorem \ref{thm:sparsity_pattern_change}. If the number of sparsity pattern changes is finite then eventually, the sparsity pattern is identified and $f(x_k)$ converges to $f(\Bar{x})$ where $\Bar{x}$ is some optimal point corresponding to the last sparsity pattern. If it is infinite, it runs into the same issue as before where an approximately global optimal solution is produced. However, if we assume that sequence $(x_k)$ is bounded then these points will have a convergent subsequence and that point will have a zero gradient norm which will be a global optimum. Furthermore, $f(x_k)$ sequence will converge to the same objective, thus $(x_k)$ sequence will either converge to that point or jump between global optimum points. 

An immediate consequence of the previous theorems is the convergence rate of the IWHT algorithm.

\begin{corollary}
\label{cor:convergence_rate}
Let $(x_k)$ be a sequence generated by the IWHT algorithm. Assume that sequence $(x_k)$ is bounded. Then for sufficiently large $k$, the function value converges with $O(1/k)$. If $f(x)$ is restricted strongly convex, then it converges linearly. 
\end{corollary}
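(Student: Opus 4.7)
The plan is to reduce the problem, after finitely many iterations, to standard gradient descent on a convex (or strongly convex) restricted function, and then cite the classical rates. First, the boundedness assumption together with Bolzano--Weierstrass yields a limit point $\bar{x}$ of $(x_k)$, which by Lemma \ref{lemma:limit_d_stationary} is $D$-stationary; Theorem \ref{thm:L_convergence} then upgrades this to convergence of the entire sequence $x_k \to \bar{x}$, and Lemma \ref{lemma:subsequential_convergence} gives $\|x_{k+1} - x_k\|_{D-H} \to 0$.

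Next I would argue that the active support $I_1(x_k)$ eventually stabilizes to $S := I_1(\bar{x})$. In the case $\|\bar{x}\|_0 = s$, the nonzero entries of $\bar{x}$ are bounded away from zero by some $\mu > 0$; since $x_k \to \bar{x}$ and $\|x_{k+1}-x_k\|_{D-H}\to 0$, for $k$ large enough the $s$ largest entries in magnitude of the scaled projection argument $D^{1/2}x_k - D^{-1/2}\nabla f(x_k)$ must coincide with the indices in $S$ (this is exactly the support-identification step already used inside the proof of Theorem \ref{thm:L_convergence}). In the case $\|\bar{x}\|_0 < s$, Lemma \ref{lemma:D_stationary_point_characterization} forces $\nabla f(\bar{x}) = 0$, so $\bar{x}$ is a global minimizer of the convex function $f$; the contraction inequality (\ref{eq:D_convergence6}) then shows $\|x_k - \bar{x}\|_{D}^2$ is decreasing at a rate controlled by $\|\nabla_{S} \bar{f}(z)\|_{D_S^{-1}}^2$, from which a $O(1/k)$ objective bound follows by the usual telescoping argument in the spirit of standard gradient descent proofs.

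Once $I_1(x_k) = S$ for all $k \geq K$, define the restricted function $\bar{f} : \R^{|S|} \to \R$ as in (\ref{eq:D_convergence1}) and observe that $\bar{f}$ inherits convexity from $f$ and is Lipschitz-smooth with respect to $D_S$ (since $D \succeq H \succeq \nabla^2 f$ after restriction). The recursion (\ref{eq:D_convergence2}) shows that $z_k := (x_k)_S$ then evolves by the classical weighted gradient step $z_{k+1} = z_k - D_S^{-1}\nabla \bar{f}(z_k)$. Applying the standard $O(1/k)$ bound for gradient descent on smooth convex functions (e.g., the descent-plus-convexity telescoping argument of \cite{first_order_amir_beck}) in the norm $\|\cdot\|_{D_S}$, together with $f(x_k) - f(\bar{x}) = \bar{f}(z_k) - \bar{f}(\bar{z})$, gives the stated $O(1/k)$ rate for $k$ sufficiently large.

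For the linear rate under the $s$-RSC hypothesis, note that any two vectors in the $S$-restricted space correspond to full vectors whose difference has support in $S$, hence cardinality at most $s$, so (\ref{eq:restricted_strong_convexity}) yields strong convexity of $\bar{f}$ with parameter $\sigma_s$ (in the Euclidean norm, and hence in $\|\cdot\|_{D_S}$ up to constants from the spectrum of $D_S$). The standard contraction analysis for gradient descent on a smooth strongly convex function then gives a geometric rate for $\|z_k-\bar{z}\|_{D_S}^2$ and thereby for $f(x_k) - f(\bar{x})$. The only real obstacle is the $\|\bar{x}\|_0 < s$ branch where the support need not stabilize, but in that branch $\bar{x}$ is already a global optimum of the unconstrained problem and the Fejér-type inequality (\ref{eq:D_convergence6}) supplies the necessary decrease to still conclude the claimed rates.
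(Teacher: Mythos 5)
Your proposal is correct and follows essentially the same route as the paper: whole-sequence convergence via Theorem \ref{thm:L_convergence}, eventual support identification, reduction to classical weighted gradient descent on the restricted function $\bar{f}$, and the standard $O(1/k)$ and linear rates, with the $\|\bar{x}\|_0 < s$ branch treated separately via the contraction inequalities. The paper is equally terse on that last branch, relegating the needed descent-plus-convexity inequality (its equation (\ref{eq:bregman_descent_inequality6})) and the linear-rate counterpart to a footnote and Appendix \ref{ap:inequality_linear_conv_strong_conv}, which is where your appeal to ``the usual telescoping argument'' is actually cashed out.
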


\begin{proof}
The proof follows\footnote{In the artificial case that $x_k$ converges to a point with $||x||_0 < s$, proofs do not follow immediately from the classical arguments. Additional inequalities are needed. The key inequality for the standard convex case to prove $O(1/k)$  convergence rate is derived in equation (\ref{eq:bregman_descent_inequality6}) and for the linear convergence, we give a brief derivation in Appendix \ref{ap:inequality_linear_conv_strong_conv}.} from the fact that from the previous theorem sequence $(x_k)$ converges and thus, for sufficiently large $k$, $(x_k)$ will not change the sparsity pattern. Then the IWHT algorithm reduces to standard gradient descent in the restricted space and hence the usual gradient descent rate of $O(1/k)$ in the Lipschitz gradient case \cite{teboulle2023elementary} and $O(\rho^k)$ in the additional strongly convex case follow \cite{nesterov2018lectures}.
\end{proof}

\subsection{CW-minimum}
In \cite{amir_beck_sparse_opt} the concept of CW-minimality (CW-optimality)  is introduced for optimization over the set of sparse vectors. 
\begin{definition}
Let $x^* \in C_s$ be a sparse vector. $x^*$ is called a CW-Minimum point of $f$ if it satisfies 

Either $||x^*||_0 < s$ and the following holds
\begin{equation}
\label{eq:cw_minimum_1}
\begin{array}{llll}
f(x^*) = \underset{t\in \R}\min f(x^* + t e_i) \quad \forall i = 1,..,n, \\
\end{array}
\end{equation}
or $||x^*||_0 = s$ and one has \\
\begin{equation}
\label{eq:cw_minimum_2}
\begin{array}{llll}
f(x^*) \leq \underset{t\in \R}\min f(x^* - x^*_ie_i + t e_j) \quad \forall i \in I_1(x^*), j = 1,..,n.
\end{array}
\end{equation}
\end{definition}
Intuitively, CW checks for pairwise change of sparsity pattern without optimizing over the new space. Instead, only optimization over the new variable is performed. Clearly, a globally optimal solution satisfies this property. Authors also show that if a point satisfies CW-Minimum property then it also satisfies $L$-stationarity \ref{theorem:l_stationarity} \cite{amir_beck_sparse_opt}. We generalize this result to Bregman functions that have a separable form. 

\begin{theorem}
Let $x \in C_s$ be a CW-minimum point. Then it is $L_h$-Bregman stationary with respect to any separable Bregman function.
\end{theorem}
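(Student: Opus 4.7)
The plan is to exploit the separability of $h = \sum_k h_k$ to reduce the Bregman stationarity subproblem $\min_{y\in C_s}\{\nabla f(x)^T(y-x) + L_h D_h(y,x)\}$ to a per-coordinate scalar optimization coupled with a support selection problem. The objective splits as $\sum_k \phi_k(y_k)$ with $\phi_k(z) = \nabla_k f(x)(z-x_k) + L_h D_{h_k}(z,x_k)$; let $u_k\in\argmin_{z\in\R}\phi_k(z)$. Any $y\in C_s$ with support $T$ of size $|T|\leq s$ satisfies $\sum_k\phi_k(y_k) \geq \sum_{k\in T}\phi_k(u_k) + \sum_{k\notin T}\phi_k(0)$, so the outer minimum is attained by choosing $T$ to maximize $\sum_{k\in T}[\phi_k(0)-\phi_k(u_k)]$. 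I will show that $T = I_1(x)$ with values $y_k = u_k = x_k$ is an optimal choice, so that $y=x$ is itself a minimizer and $x$ is $L_h$-Bregman stationary.

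Two immediate consequences of CW-minimality are needed. Taking $j = i$ in (\ref{eq:cw_minimum_2}) when $||x||_0 = s$ (respectively using (\ref{eq:cw_minimum_1}) when $||x||_0 < s$), the map $t\mapsto f(x + t e_i)$ is minimized at $t=0$ on the relevant range, which by convexity of $f$ forces $\nabla_i f(x) = 0$ for every $i\in I_1(x)$ (and for every $i\in\{1,\ldots,n\}$ in the second case). Consequently $u_k = x_k$ and $\phi_k(0) = L_h D_{h_k}(0,x_k)\geq 0$ for $k\in I_1(x)$, while $\phi_k(0) = 0$ and $\phi_k(u_k)\leq 0$ for $k\in I_0(x)$. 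The critical estimate is the gain inequality
\begin{equation*}
\phi_i(0)-\phi_i(u_i) \;\geq\; \phi_j(0)-\phi_j(u_j), \qquad \forall\,i\in I_1(x),\ j\in I_0(x),
\end{equation*}
which certifies $T = I_1(x)$ as an optimal support, and thus $y=x$ as an optimizer of the subproblem.

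To prove the gain inequality I would probe CW-minimality at the specific point $y = x - x_i e_i + u_j e_j$, which is exactly the class of $s$-sparse competitors appearing in (\ref{eq:cw_minimum_2}). CW-minimality gives $f(x)\leq f(y)$, while the Bregman descent lemma (\ref{eq:bregman_descent_lemma}) with $L = L_h$ applied at the same $y$ gives $f(y) \leq f(x) + \nabla f(x)^T(y-x) + L_h D_h(y,x)$. Using $\nabla_i f(x) = 0$ together with the separability of $h$ collapses the perturbation term on the right to exactly $\phi_j(u_j) + L_h D_{h_i}(0,x_i) = \phi_j(u_j) + \phi_i(0) - \phi_i(u_i)$. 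Chaining $0 \leq f(y) - f(x) \leq \phi_j(u_j) + \phi_i(0) - \phi_i(u_i)$ rearranges to the claimed inequality. The degenerate case $||x||_0 < s$ is essentially immediate: $\nabla f(x) = 0$ everywhere makes each $\phi_k$ non-negative with $\phi_k(x_k) = 0$, so $y = x$ trivially minimizes the subproblem over $C_s$. I expect the descent-lemma-plus-CW-minimality coupling that produces the gain inequality to be the main technical step; the remaining bookkeeping (tie-breaking in the gain sort and the degenerate case) is routine.
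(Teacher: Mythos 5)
Your proposal is correct and rests on exactly the same ingredients as the paper's proof: the separable reduction of the Bregman subproblem to per-coordinate gains $\phi_k(0)-\phi_k(u_k)$, the vanishing of $\nabla_i f(x)$ on $I_1(x)$, and the comparison of the swap point $x - x_i e_i + u_j e_j$ against $f(x)$ via the descent lemma, which is precisely the paper's Case 3 argument. The only cosmetic difference is that you argue the implication directly (CW-minimality certifies $I_1(x)$ as an optimal support) while the paper phrases it as a contrapositive (failure of the gain inequality produces a CW-improving swap).
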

\begin{proof}
We will prove this statement by its contrapositive. If a point is not Bregman stationary then it is not a CW-Minimum either. We analyze the situation in three cases:

Case 1. $||x||_0 \leq s$ but $\nabla_i f(x) \neq 0$ for some $i \in I_1(x)$. Then, using case 1 (\ref{eq:cw_minimum_1}) or case 2 from (\ref{eq:cw_minimum_2}), minimizing $f(x)$ over $x_i$ gives a smaller value and hence it is not a CW-Minimum point.

Case 2. $||x||_0 < s$ and $\nabla_i f(x) = 0 \ \forall i \in I_1(x)$. If the point $x$ is not Bregman stationary, then $\nabla f \neq 0$. Then, the condition in (\ref{eq:cw_minimum_1}) would set some $j \in I_0(x)$ to nonzero value where $\nabla_j f \neq 0$. Thus, $x$ is not a CW-minimum point. 

Case 3.  $||x||_0 = s$ and $\nabla_i f(x) = 0 \ \forall i \in I_1(x)$. If the point $x$ is not Bregman stationary, then the algorithm described in (\ref{eq:bregman_descent_seperable_solution}) does not pick all indices of $I_1(x)$. Hence, there exist $j \in I_0(x)$ and $u_j$ such that $\phi_j(0)-\phi_j(u_j) > \phi_i(0)-\phi_i(u_i)$ for some $i \in I_1(x)$. Then, the condition in (\ref{eq:cw_minimum_2}) implies that point is not a CW-Minimum because the minimization over $x_j$ while setting $x_i = 0$  bounded above by $t = u_j$.
\end{proof}

We also note that we can interpret the CW Minimality as a Bregman stationarity with the choice of kernel generating distance $h(x) = f(x) + M\sum_{k \neq i,j} x_k^2$ for sufficiently large $M > 0$. Then we can take $L_h = 1$ and the upper bound on $f(x)$ becomes
\begin{equation}
\label{eq:cw_min_bregman}
\begin{array}{ll@{}ll}
f(y) &\leq f(x) + \nabla f(x)^T(y-x) + D_h(y,x) \\
 &= f(x) + \nabla f(x)^T(y-x) + f(y) - f(x) -  \nabla f(x)^T(y-x) + M\underset{k \neq i,j}\sum (y_k-x_k)^2 \\ 
 &= f(y) + M\underset{k \neq i,j}\sum (y_k-x_k)^2.
\end{array}
\end{equation}
For sufficiently large $M$, to minimize this upper bound given in (\ref{eq:cw_min_bregman}), for any $k \neq i,j$ we have to set $y_k = x_k$. Then assuming $i \in I_1(x), j \in I_0(x)$, we can either set $y_i = x_i$ and keep $y_j = x_j = 0$, hence changing nothing, or we can set $y_i = 0$ and optimize over $y_j$. This is exactly described in (\ref{eq:cw_minimum_2}), but for all possible choices of pairs of $(i,j)$. This gives a partial converse intuition for the connection between CW-Minimality and Bregman Stationarity.

\section{Computing $H$ or $D$}
\label{sec:compute_weights}
In this section, we briefly discuss how to compute a Diagonal Scaling Matrix (DSM) $D$ such that $f(x)$ is relatively smooth to $h(x) = \frac{1}{2}x^TDx$.

One can see easily that if a DSM $D$ majorizes the Hessian\footnote{Although we do not need twice differentiability to use IWHT, it simplifies the procedure to compute a DSM.} of $f$, everywhere, i.e. $D \succeq \nabla^2 f(x)$, then it is also true that $D+A$, where $A \succeq 0$ also majorizes Hessian of $f$ everywhere. This is analogous to the Lipschitz gradient, where the problem of interest is often to find the smallest Lipschitz constant for the gradient of $f$ since using larger Lipschitz constants makes the algorithm more conservative and slower for convergence-like results. Furthermore, if we choose the Lipschitz constant sufficiently large, any local minimum of (\ref{eq:optimization_model}), will satisfy $L$-stationarity condition \ref{theorem:l_stationarity}, which makes the algorithm very pessimistic in terms of possible points it can converge to. Therefore, we want a DSM that is "small" in some sense. However, since a DSM uses more than one parameter the best criterion is not immediately obvious for choosing a minimal DSM.

To that end, we propose three models to compute a DSM. Firstly, the model with linear cost, (\ref{eq:compute_D_L}) denoted with $D_L$, the model with quadratic cost (\ref{eq:compute_D_Q}), denoted with $D_Q$ and the minimax model (\ref{eq:compute_lipschitz}) which we denote with $L$ since it coincides with the Lipschitz constant.

\begin{minipage}[t]{0.45\textwidth}
\begin{equation}
\label{eq:compute_D_L}
\begin{array}{ll@{}ll}
\underset{w}\min &\quad {\bf 1}^T w  \\
\text{st.} & \Diag(w) \succeq C \\
\end{array}
\end{equation}
\end{minipage}
\hfill 
\begin{minipage}[t]{0.45\textwidth}
\begin{equation}
\label{eq:compute_D_L_dual}
\begin{array}{lllll}
\underset{Z}\max & \langle C, Z\rangle  \\
\text{st.} & Z_{ii} = 1 \quad \forall i \\
& Z  \succeq 0 \\
\end{array}
\end{equation}
\end{minipage}

\begin{minipage}[t]{0.45\textwidth}
\begin{equation}
\label{eq:compute_D_Q}
\begin{array}{lllll}
\underset{w}\min & \quad \frac{1}{2}w^T w  \\
\text{st.} & \Diag(w) \succeq C \\
\end{array}
\end{equation}
\end{minipage}
\hfill 
\begin{minipage}[t]{0.45\textwidth}
\begin{equation}
\label{eq:compute_D_Q_dual}
\begin{array}{lllll}
\underset{Z}\max & \langle C,Z\rangle -\frac{1}{2} k^Tk  \\
\text{st.} & Z_{ii} = k_i \quad \forall i \\
& Z  \succeq 0 \\
\end{array}
\end{equation}
\end{minipage}

\begin{minipage}[t]{0.45\textwidth}
\begin{equation}
\label{eq:compute_lipschitz}
\begin{array}{ll@{}ll}
\underset{w}\min & \underset{i}\max \quad w_i  \\
\text{st.}  & \Diag(w) \succeq C \\
\end{array}
\end{equation}
\end{minipage}
\hfill 
\begin{minipage}[t]{0.45\textwidth}
\begin{equation}
\label{eq:compute_lipschitz_dual}
\begin{array}{lllll}
\underset{Z}\max & \langle C,Z\rangle  \\
\text{st.} & \tr(Z) = 1 \\
& Z  \succeq 0. \\
\end{array}
\end{equation}
\end{minipage}

In these models, for computational purposes, we assume that we are given some matrix $C$ such that $C \succeq \nabla^2 f(x) \ \forall x$. For instance, in the least squares type problems $C = A^TA$ can be taken since the Hessian is constant, and in the logistic regression type problems, it can be $C = \frac{1}{4}A^TA$ which is the worst case largest Hessian when the diagonal matrix of probabilities $P$ is taken to be the worse case, i.e $P = \frac{1}{2}I$ so that $P(I-P)$ is maximized. Additionally, for the log-sum-exp type of problems, the Hessian  is in the form of $Diag(w)-ww^T$ for some $w\geq 0, 1^Tw = 1$. Then, the Hessian is bounded above by $\frac{1}{2}I_n$, where $I_n$ is the identity matrix. 

Additionally, these models can be relaxed. If Algorithm \ref{alg:ciwht} is used to compute a sparse solution for (\ref{eq:optimization_model}), then $\Diag(w) \succeq C$ is a conservative constraint. This is because optimization can instead be done over $w$ such that $2s$ sparse minimum eigenvalue of $\Diag(w)-A$ is non-negative since $D$ is multiplied by a vector that has cardinality at most $2s$. This new set is more relaxed than $\Diag(w) \succeq A$. Nevertheless, this constraint is not easy to work with since checking the feasibility of a given $w$ is even NP-Hard as the problem boils down to the Sparse Principal Component Analysis (SPCA) problem \cite{spca_np_hard}. 

Computationally, we use the dual of these models to compute the weights. In Appendix \ref{ap:get_primal} we briefly show how to compute primal variables from a dual solution. Although these problems can be solved with convex optimization methods, interior point methods do not scale well to larger-scale problems. A common approach to alleviate this problem is to use Burer-Monteiro (BM) factorization \cite{burer_monteiro}. BM factorization stores the positive semidefinite matrix $Z$ implicitly by using a low-rank factorization $Z = BB^T$, where $B \in \mathbb{R}^{n\times k}$ and $k$ is some rank parameter chosen. When the algorithm operates on the $B$ matrix instead of $Z$, the semidefinite cone constraint can be removed from the problem. Furthermore, if $k$ is chosen much smaller than $n$, the algorithm will use less storage.  In the original paper, the authors use this factorization with an Augmented Lagrangian Method (ALM). Subsequent research has demonstrated that substituting the ALM with Block-Coordinate Maximization (BCM) \cite{javanmard2016phase,mixing_method,cnv_BM_SDP}, Riemannian gradient \cite{javanmard2016phase,mei2017solvingsdpssynchronizationmaxcut}, or Riemannian trust-region \cite{absil_rtr,boumal2016non,journee2010low} methods yields superior empirical performance. In particular, the BCM algorithm proposed in \cite{cnv_BM_SDP,mixing_method} is shown to be faster than state-of-the-art methods for problems in the form of (\ref{eq:compute_D_L_dual}). 

These results lead us to adapt the BCM algorithm proposed in \cite{cnv_BM_SDP} to solve models in (\ref{eq:compute_D_L_dual}) and (\ref{eq:compute_D_Q_dual}). 

\begin{algorithm}[ht]
\caption{BCM for Model (\ref{eq:compute_D_L_dual})}
\label{alg:bcm_L}
\begin{algorithmic}[1]
\STATE{\textbf{Input:} $C, B$}
\FOR{$i = 1,2, ...,$}
    \FOR{$j = 1,2, ...,n$}
        \STATE{$u_j = C_j - C_{jj}e_j$}
        \STATE{$g_j = B^Tu_j $}
        \STATE{$B_j = \frac{g_j}{||g_j||_2}$}
    \ENDFOR
\ENDFOR
\end{algorithmic}
\end{algorithm}

\begin{algorithm}[ht]
\caption{BCM for Model (\ref{eq:compute_D_Q_dual})}
\label{alg:bcm_Q}
\begin{algorithmic}[1]
\STATE{\textbf{Input:} $C,B $}
\FOR{$i = 1,2, ...,$}
    \FOR{$j = 1,2, ...,n$}
        \STATE{$u_j = C_j - C_{jj}e_j$}
        \STATE{$g_j = B^Tu_j $}
        \STATE{Find the (unique) positive root of $t^3-C_{jj}t-||g||_2 = 0$ }
        \STATE{$B_j = t\frac{g_j}{||g_j||_2}$}
    \ENDFOR
\ENDFOR
\end{algorithmic}
\end{algorithm}

We note that Model (\ref{eq:compute_lipschitz_dual}) can be solved easily by a maximum eigenvalue solver without using a complicated optimization algorithm. 
 
Alternatively, instead of relaxing the positive semidefinite cone constraint with BM factorization, these problems can also be solved in $Z$ space by using convex optimization techniques that scale well to large-scale problems like Frank-Wolfe type algorithms \cite{yurtsevercgal,yurtsever19}. Nevertheless, computational results shown in respective papers as well as in \cite{cnv_BM_SDP,mixing_method} motivated us to adapt the coordinate descent type algorithm. 

Finally, in our numerical tests, we observed that relaxing the Algorithms \ref{alg:bcm_L} and \ref{alg:bcm_Q} by parallelizing the inner loop for computational speed-up works well. This makes the Algorithm \ref{alg:bcm_L} reduce to the Conditional Gradient (CG) algorithm with a unit step size. On the other hand Algorithm \ref{alg:bcm_Q} becomes a mixture of the CG algorithm (calculating $g_j$ for each row) and Coordinatewise Maximization (computing value of $t$ for each row).

\section{Numerical Results}
\label{sec:numerical_results}
In this section, we present the algorithmic choices made during the numerical implementations of the algorithms.

\subsection{Algorithmic Considerations}
We begin by discussing the line search procedure described in Algorithm \ref{alg:line_search} below.
\begin{algorithm}[ht]
\caption{Line Search Strategy}
\label{alg:line_search}
\begin{algorithmic}[1]
\STATE{\textbf{Input:} $D, x \in C_s, \alpha \in (0,1) ,J \in \mathbb{N}, \beta > 0$}
\FOR{$j = 1,2,.., J$}
    \STATE{$D_j = \alpha^{J-j}D$}
    \STATE{$y_{j} = \mathcal{P}_{C_s}(D_j^{\frac{1}{2}}x-D_j^{-\frac{1}{2}}\nabla f(x))$}
    \STATE{$x_{j} = D_j^{-\frac{1}{2}}y_{j}$}
    \IF{$f(x_j) \leq f(x) - \beta ||x_j-x||_2^2$}
    \STATE{\textbf{break}}
    \ENDIF
\ENDFOR
\RETURN{$x_j$}
\end{algorithmic}
\end{algorithm}


For the framework of (\ref{eq:optimization_model}), without line search, if we only take sufficiently small step sizes, then the algorithm is still a descent algorithm but often generates mediocre quality points. Line search usually improves the gradient descent type algorithms \cite{nocedal1999numerical,bertsekas1997nonlinear,first_order_amir_beck} and in the sparsity constrained problems helps discover superior quality points in terms of objective function value. Line search was further motivated in Section \ref{sec:compute_weights}, with how we compute the diagonal matrix $D$ that majorizes $\nabla^2 f(x)$ in practice. Furthermore, Theorem \ref{thm:L_convergence} shows that often in a well-conditioned problem, we will have convergence very quickly. Thus, we will not search through many points, which is desirable since Model (\ref{eq:optimization_model}) is a difficult non-convex problem. This consideration motivates restarting as described in Algorithm \ref{alg:sparse_restart}. These two ingredients combined improve the basic algorithms described in \ref{alg:ciwht} and \ref{alg:iwht} remarkably as can be seen in Figure \ref{fig:ciwht_effect_recovery}.

\begin{algorithm}[ht]
\caption{Gradient Based Restart for Extended Search}
\label{alg:sparse_restart}
\begin{algorithmic}[1]
\STATE{\textbf{Input:} $D, x \in C_s, \gamma \in (0,1)$}
\STATE{$\widehat{D} = \gamma D$}
\STATE{$y = \mathcal{P}_{C_s}(\widehat{D}^{\frac{1}{2}}x-\widehat{D}^{-\frac{1}{2}}\nabla f(x))$}
\STATE{$x = \widehat{D}^{-\frac{1}{2}}y$}
\end{algorithmic}
\end{algorithm}

In \cite{gpnp}, a complex search algorithm that combines a line search strategy Algorithm \ref{alg:line_search}, with a restarting scheme for the search based on gradient information as in Algorithm \ref{alg:sparse_restart}, and Newton steps described in Algorithm \ref{alg:newton_step} to accelerate the convergence  is proposed\footnote{The restarting scheme is not mentioned in \cite{gpnp}. However, it is present in the code distributed by the author.}. Thus, it performs a significantly more extensive search.   Essentially, the algorithm attempts a classical Newton step in the restricted space where $x_i$'s are non-zero. The step is accepted if the objective decreases by some amount, otherwise, it is rejected. 

\begin{algorithm}[ht]
\caption{Newton Step}
\label{alg:newton_step}
\begin{algorithmic}[1]
\STATE{\textbf{Input:} $x \in C_s, \beta > 0$}
\STATE{Solve $\nabla_{I_1(x),I_1(x)}^2f(x)(v_{I_1(x)}-x_{I_1(x)}) = -\nabla_{I_1(x)} f(x), \quad  v_{I_0(x)} = 0$}
\IF{$f(v) \leq f(x) - \beta ||v-x||_2^2$}
\STATE{$x = v$}
\ENDIF
\end{algorithmic}
\end{algorithm}

Without going into much detail we numerically motivate the first-order acceleration methods for the problem (\ref{eq:optimization_model}) used in our experiments. We achieve this acceleration by adapting a monotone version of the fast iterative shrinkage thresholding algorithm (MFISTA) proposed in \cite{mfista}. In our preliminary numerical testing, we found the behavior of MFISTA and its non-monotone version to be very similar, yet MFISTA is more interesting theoretically for (\ref{eq:optimization_model}) because it keeps the monotonicity property in terms of objective as in proved for IWHT algorithm in Lemma \ref{lemma:subsequential_convergence}.

\begin{algorithm}[ht]
\caption{MFISTA for (\ref{eq:optimization_model})}   
\label{alg:mfista}
\begin{algorithmic}[1]
\STATE{\textbf{Input:} $x_0 \in C_s, \beta > 0$}
\STATE{\textbf{Initilization:} $y_0 = x_0,t_0 = 1$}
\FOR{$k = 1,2, ...,$}
\STATE{pick $\alpha_k$ and $D_k$ and set $\widehat{D} = \alpha_k D_k$}
\STATE{$z_{k} = \widehat{D}^{-\frac{1}{2}}\mathcal{P}_{C_s}(\widehat{D}^{\frac{1}{2}}y_{k}-\widehat{D}^{-\frac{1}{2}}\nabla f(y_{k}))$}
\IF{$f(z_k) \leq f(x_k)$}
\STATE{$x_{k+1} = z_k$}
\ELSE
\STATE{$x_{k+1} = x_k$}
\ENDIF
\IF{$I_1(z_k) = I_1(x_k)$}
\STATE{$t_{k+1} = \frac{1 + \sqrt{1 + 4t_k^2}}{2}$}
\STATE{$y_{k+1} = x_{k+1} + \frac{t_k}{t_{k+1}}(z_k-x_{k+1}) + \frac{t_k-1}{t_{k+1}}(x_{k+1}-x_k) $}
\ELSE
\STATE{$t_{k+1} = 1, y_{k+1} = x_{k+1}$}
\ENDIF
\ENDFOR
\end{algorithmic}
\end{algorithm}

\subsection{Numerical Experiments}
This section focuses on the least squares problem with a sparsity constraint. Thus, our optimization model of interest looks as follows
\begin{equation}
\label{eq:sparse_least_squares}
\begin{array}{ll@{}ll}
\underset{x}\min \quad & \frac{1}{2} ||Ax-b||_2^2  \\
\text{st.} \quad & ||x||_0 \leq s. \\
\end{array}
\end{equation}
This model attracted a lot of research interest after the development of the theory of compressed sensing (CS) \cite{candes2006robust,donoho2006compressed,candes2005decoding}. This was followed up by many algorithmic developments \cite{blumsignal,blumdaviesharmonic,blumdavies-selected,gpnp,needell2009cosamp,bahmani2013greedy,pati1993orthogonal,candes2005l1,yang2011alternating,figueiredo2007gradient}. For an extensive numerical comparison between various types of algorithms, we refer to \cite{gpnp,nhtp}.  

We follow the benchmark used in \cite{gpnp}. Entries of the data matrix $A \in \mathbb{R}^{64\times 256}$ are generated randomly from samples of independent standard Gaussian distribution $\mathcal{N}(0,1)$. Then the columns of the generated random matrix are scaled to have a unit norm. The $s$-sparse ground truth signal $x^*$ is also generated with $\mathcal{N}(0,1)$. Finally, we compute the observation vector $b = Ax^*$. Thus, we are testing our algorithms in the noiseless recovery problems, and therefore the magnitude of $x^*$ does not affect the problem. If an algorithm computes a solution $x$ with a small relative error, i.e. $\frac{||x-x^*||}{||x^*||} < 10^{-4}$ then we say recovery is successful. We will refer to this simulation shortly as the CS problem. 

To abbreviate and differentiate the algorithms, we use Newton + $L$ to refer to the GPNP algorithm \cite{gpnp}. Newton + $D_Q + D_L + L$  refers to the modified GPNP algorithm where we change the simple line search strategy by incorporating different DSM's computed by the models given in (\ref{eq:compute_D_L}),(\ref{eq:compute_D_Q}) and (\ref{eq:compute_lipschitz}). Each DSM is applied for one iteration and changed/swapped for another in the next iteration. It also shows the order of DSM's applied in the algorithm. IHT refers to \ref{alg:iwht} where $D = L_f I$, and CIWHT refers to \ref{alg:ciwht} with the same three DSM's computed as before.   LS\& R implies the incorporation of Line Search \ref{alg:line_search} and gradient-based restart \ref{alg:sparse_restart}, instead of using theoretically safe choices. Convention in the naming also applies to the version of the MFISTA. Table \ref{tab:alg_composition} summarizes the composition and naming scheme of the algorithms\footnote{This is not an exhaustive list but rather an overview of the naming of the algorithms. Implementations of the algorithms in Python are available in \url{https://github.com/Fatih-S-AKTAS/iwht}}.

\begin{table}[ht]
\centering
\scalebox{0.8}{ 
\begin{tabular}{|c|c|c|c|c|c|c|c|}
\hline
Framework \textbackslash Architecture & Algorithm \ref{alg:line_search} & Algorithm \ref{alg:sparse_restart} & Algorithm \ref{alg:newton_step} & $D_Q$  & $D_L$ & $L$ \\ \hline
IHT & \xmark & \xmark & \xmark & \xmark & \xmark & \cmark \\ \hline
IWHT & \xmark & \xmark & \xmark & \cmark & \cmark & \cmark \\ \hline
IHT + LS\&R & \cmark & \cmark & \xmark & \xmark & \xmark & \cmark \\ \hline
CIWHT + LS\&R & \cmark & \cmark & \xmark & \cmark & \cmark & \cmark \\ \hline
\makecell{Newton + $L$ (GPNP)} & \cmark & \cmark & \cmark & \xmark  & \xmark & \cmark \\ \hline
\makecell{Newton + $D_Q$ \\ + $D_L$ + $L$} & \cmark & \cmark & \cmark & \cmark  & \cmark & \cmark \\ \hline
\end{tabular}}
\caption{Compositions of the algorithms summarized}
\label{tab:alg_composition}
\end{table}

We set the iteration number to be 15000 for all algorithms, measured by a gradient descent-like step or the line search step. Additional stopping criteria used in \cite{gpnp} is also adapted for the GPNP-like algorithm that uses the Newton steps. However, we noticed that these stopping criteria often hurt the recovery rates of algorithms that do not use the Newton steps. Thus we only stop an algorithm that does not use Newton steps if a computed solution satisfies $||Ax-b|| \leq 10^{-10}$. For the GPNP type algorithms, we also use the criterion that if the objective value of the last $5$ points generated has a standard deviation of less than $10^{-10}$, then we terminate the search. 

First, we generated 200 instances of the CS problem and compared the effect of line search and restart, using the three DSMs and the Newton steps on the recovery rate. Figure \ref{fig:ciwht_effect_recovery} demonstrates the results.  

\begin{figure}[ht]
    \centering
    \includegraphics[width=\textwidth]{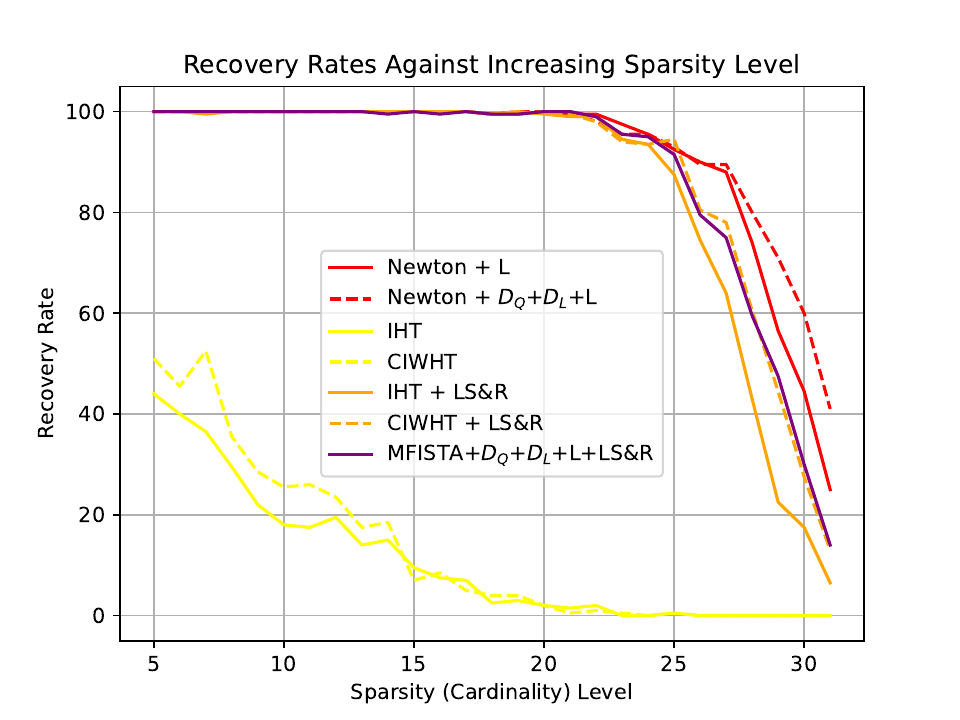}
    \caption{Comparison of different algorithmic frameworks}
    \label{fig:ciwht_effect_recovery}
\end{figure}

Figure \ref{fig:ciwht_effect_recovery} shows that the line search strategy and gradient-based restart remarkably improve the recovery rates. Moreover, using DSM instead of simple step sizes also enhances the recovery rates independent of the framework used. In other words, any algorithm equipped with DSM has a higher recovery rate than the version of it that uses only simple step size. Further incorporating the Newton steps seems to increase the recovery rates. Our intuition behind these results is that DSMs allow the algorithm to escape suboptimal points because a locally optimal point may be $L$-stationary but not $D_L$ or $D_Q$ stationary. We believe the increase in recovery rates obtained by using the Newton step is by acceleration, which allows the algorithm to converge faster, which in turn allows the overall algorithm to do a more extensive search since we also use gradient-based restart for the algorithms that use the Newton steps.

Next, to compare the effect of using different combinations of DSMs and different orders of application in the gradient steps, we conducted more extensive computational experiments utilizing the GPNP framework. Using the same setup we generated 500 instances of the CS problem. Also, in this experiment, we denote by $P$ the number of iterations after which we change the DSM used, short for the period to test out the effect of changing the DSM frequently. 

\begin{figure}[ht]
    \centering
    \includegraphics[width=\textwidth]{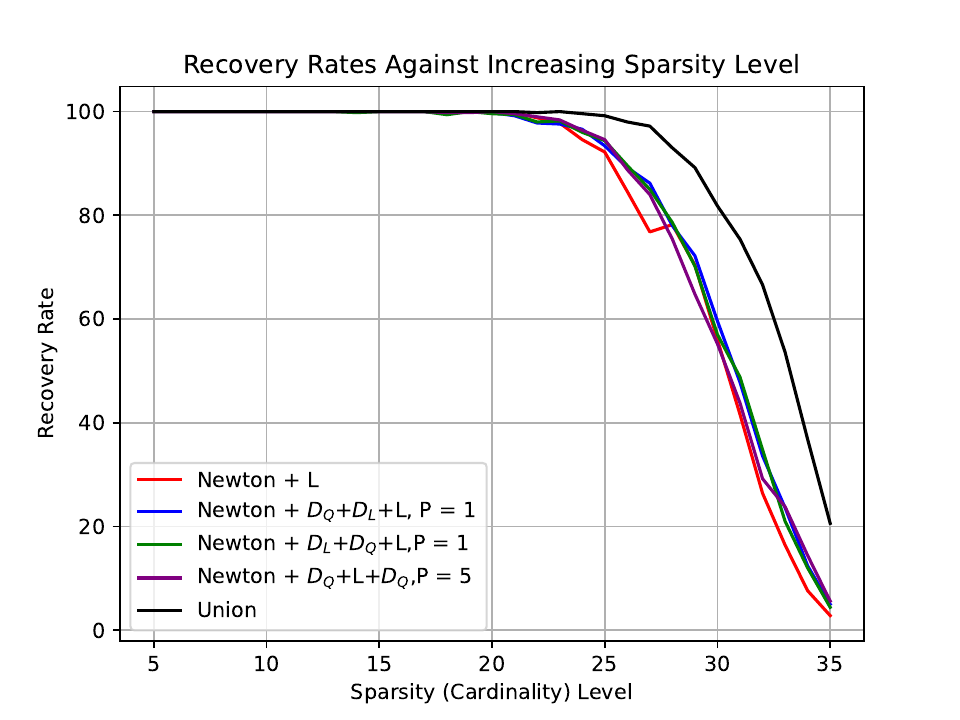}
    \caption{Effect of choice of DSM in GPNP framework}
    \label{fig:newton_comparison}
\end{figure}

``Union" is a conceptual algorithm where we use all algorithms and consider recovery successful if any of them recovers the ground truth. Figure \ref{fig:newton_comparison} shows that using the Lipschitz constant (or uniform step size since line search is done) is in fact a reasonable choice. When the sparsity level is close to 24, we see that the simple GPNP algorithm does slightly better than other algorithms while falling behind after cardinality level 27.  This shows that, contrary to our intuition, the algorithms that use multiple DSMs do not uniformly dominate the GPNP which uses only the Lipschtiz constant. This point can also be seen in the more difficult problems with high cardinality level $s$ with a fixed number of observations. The conceptual union algorithm has a significantly higher recovery rate than any of the algorithms, which shows that each algorithm performs well on different types of problems. Otherwise, the recovery rate of the union algorithm should have been much closer to the individual recovery rates of the algorithms. More detailed results of this experiment are reported in Appendix \ref{ap:additional_numerical}.

\begin{figure}[ht]
\centering
\begin{subfigure}{.5\textwidth}
  \centering
  \includegraphics[width=1\linewidth]{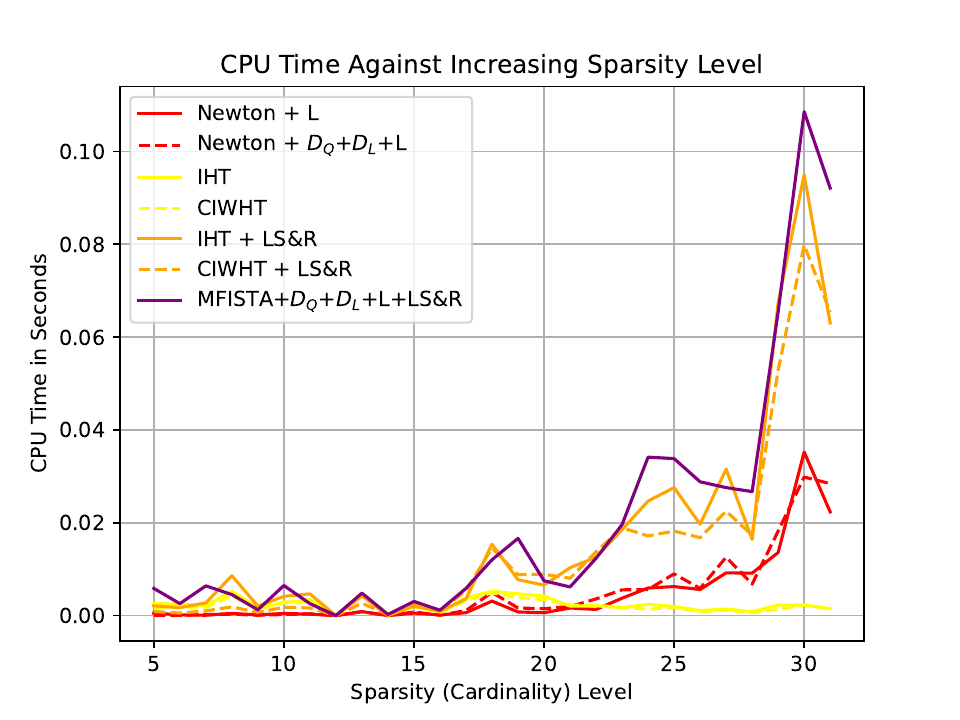}
  \caption{CPU Time}
  \label{fig:ciwht_effect_cpu}
\end{subfigure}%
\begin{subfigure}{.5\textwidth}
  \centering
  \includegraphics[width=1\linewidth]{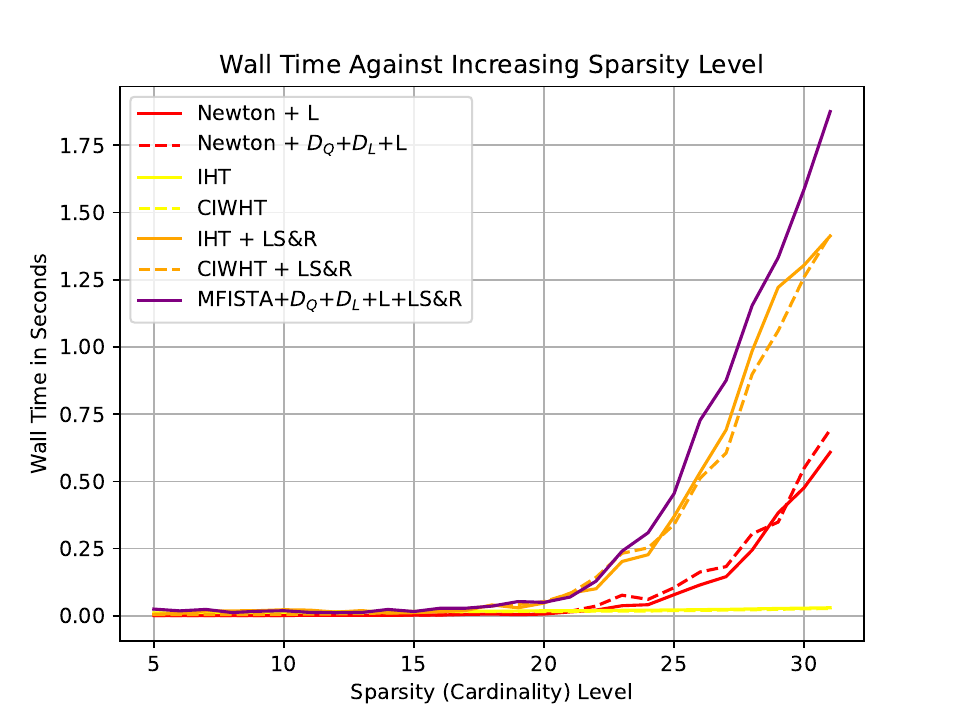}
  \caption{Wall Time}
  \label{fig:ciwht_effect_wall}
\end{subfigure}
\caption{Computational cost of different algorithmic frameworks}
\label{fig:ciwht_effect_computation}
\end{figure}

As shown in the Figures \ref{fig:ciwht_effect_computation}, \ref{fig:newton_comparison_computation}  the additional computational cost of applying multiple DSMs seems to be negligible. In fact, sometimes even beneficial because of the stopping criteria we used, sometimes recovery happens fast and algorithms terminate early as observed in Figure \ref{fig:ciwht_effect_computation}. However, Figure \ref{fig:newton_comparison_computation} shows the reverse effect also happens sometimes, for very difficult problems when the sparsity level is high, applying different DSMs makes the algorithm search for longer while applying only the Lipschitz constant hits a stopping criterion early. For other sparsity levels, there seems to be no significant difference in computational time between using multiple DSMs or only a single one. However, we remark that the additional cost of computing DSMs is not included in the computational costs of the algorithms. We assume it is computed separately, before applying the algorithms. The computational cost of computing DSMs is reported in subsection \ref{sec:numerical_compute_weights}.

Furthermore, although the computational burden of introducing a line search scheme instead of using a theoretically safe step size is relatively high, the algorithms still run reasonably fast. In addition, we see the benefit of incorporating Newton steps to accelerate the algorithm in the running times. Nevertheless, the MFISTA algorithm equipped with multiple DSMs does not improve the algorithm's recovery rate or achieve acceleration like Newton steps. We believe this could be caused by changing the DSM used in the proximal gradient step too frequently, which impedes the acceleration since the norm changes every iteration. 

\begin{figure}[ht]
\centering
\begin{subfigure}{.5\textwidth}
  \centering
  \includegraphics[width=1\linewidth]{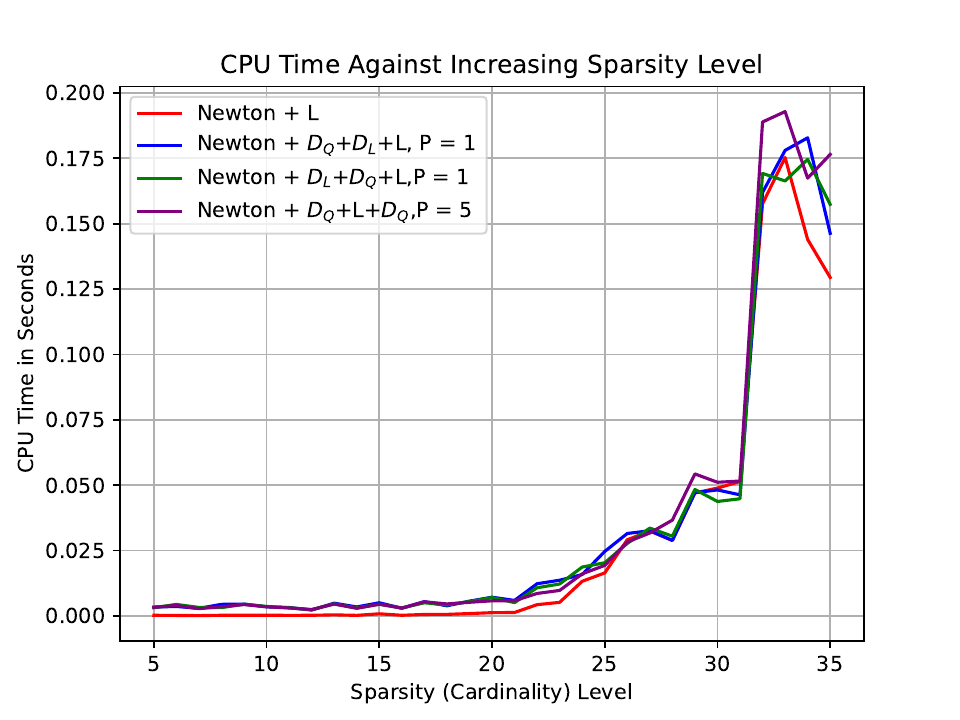}
  \caption{CPU Time}
  \label{fig:newton_comparison_cpu}
\end{subfigure}%
\begin{subfigure}{.5\textwidth}
  \centering
  \includegraphics[width=1\linewidth]{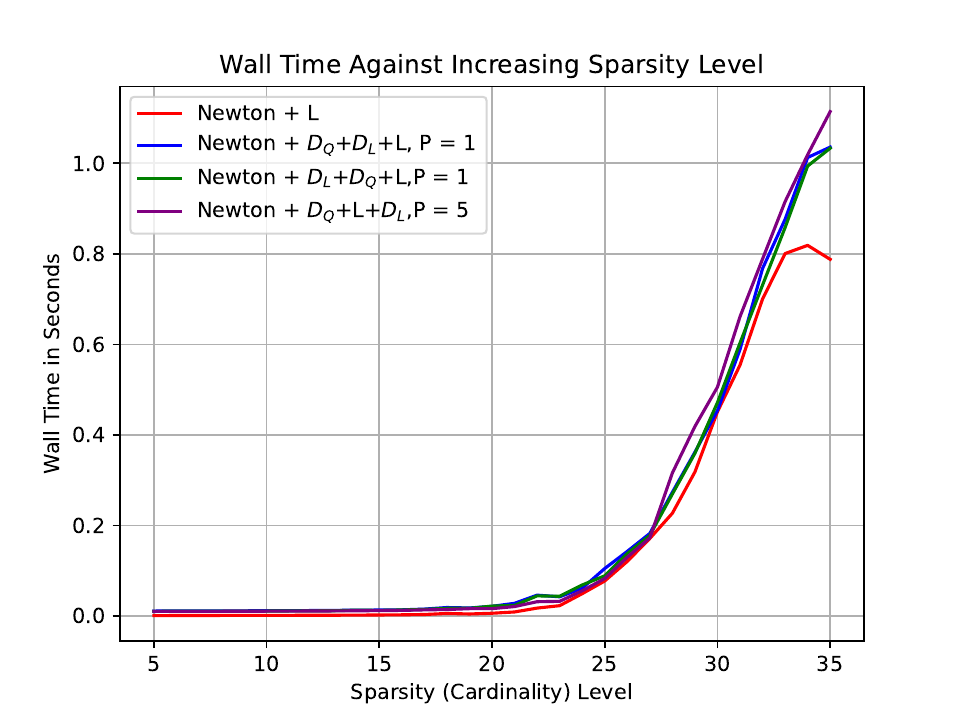}
  \caption{Wall Time}
  \label{fig:newton_comparison_wall}
\end{subfigure}
\caption{Computational costs of GPNP algorithms equipped with different DSMs}
\label{fig:newton_comparison_computation}
\end{figure}

Finally, we visualize the sequence of $f(x_k)$ generated by different algorithms for three cardinality levels with the same setup. Figure \ref{fig:f_x_trajectory} shows rapid linear convergence for all algorithms, illustrating the convergence rate proved in Corollary \ref{cor:convergence_rate}, likely because randomly generated Gaussian matrix satisfies the restricted strong convexity property. Naturally, the algorithms that apply a line search instead of theoretical safe step size choices converge much faster empirically. Moreover, we can also recognize the sparsity pattern changes in the sudden change in the $f(x_k)$ as well. Instead of drawing a smooth curve that decreases down to a limit, if there is a sudden substantial decrease, it is a sparsity pattern change and the objective value decreases sharply at that iteration. Also when the search algorithm is restarted, it can be recognized by the large spikes in $f(x_k)$. For the sparsity level $s = 16$, all algorithms except CIWHT converge to the optimal solution without needing a restart. For larger cardinality levels, algorithms start to behave slightly differently, especially after the first restart, although the early behavior seems to be similar. 

\begin{figure}[ht]
\centering
\includegraphics[width=.32\textwidth]{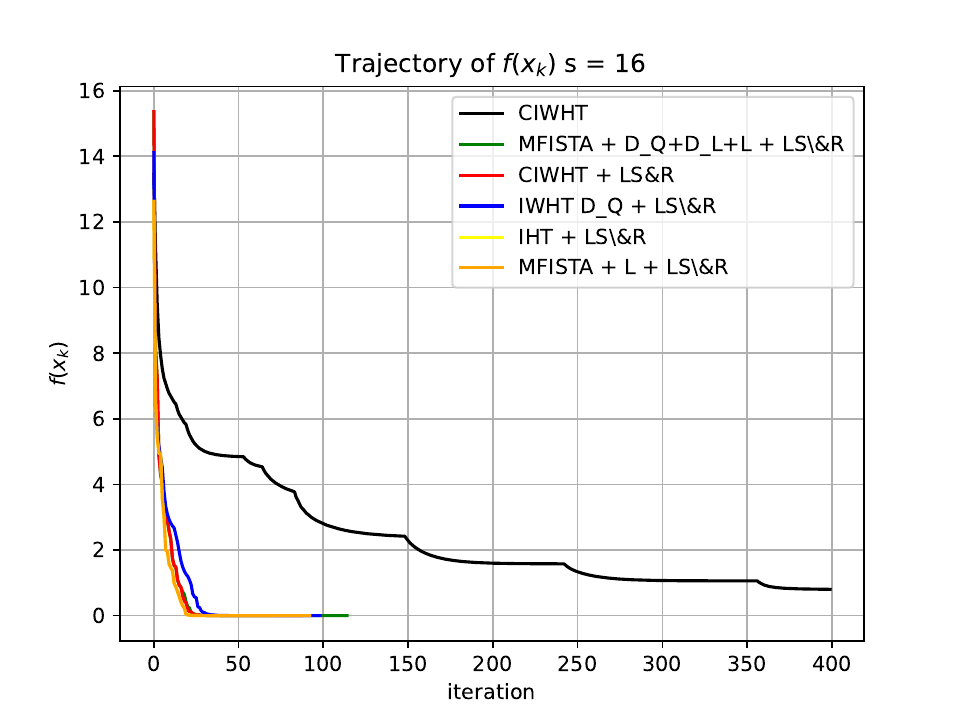}
\includegraphics[width=.32\textwidth]{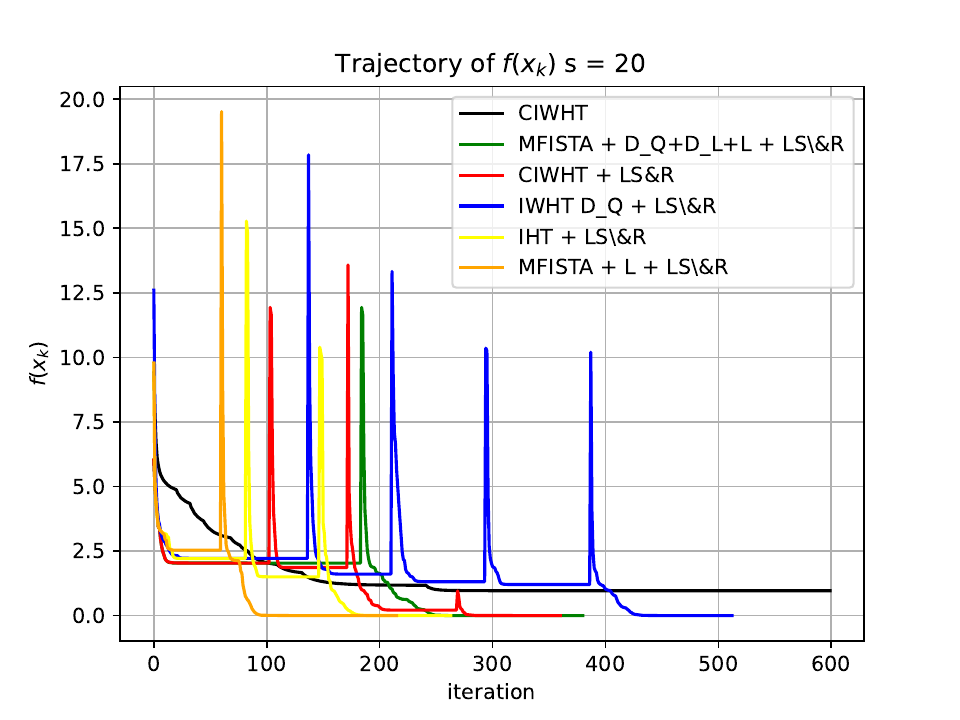}
\includegraphics[width=.32\textwidth]{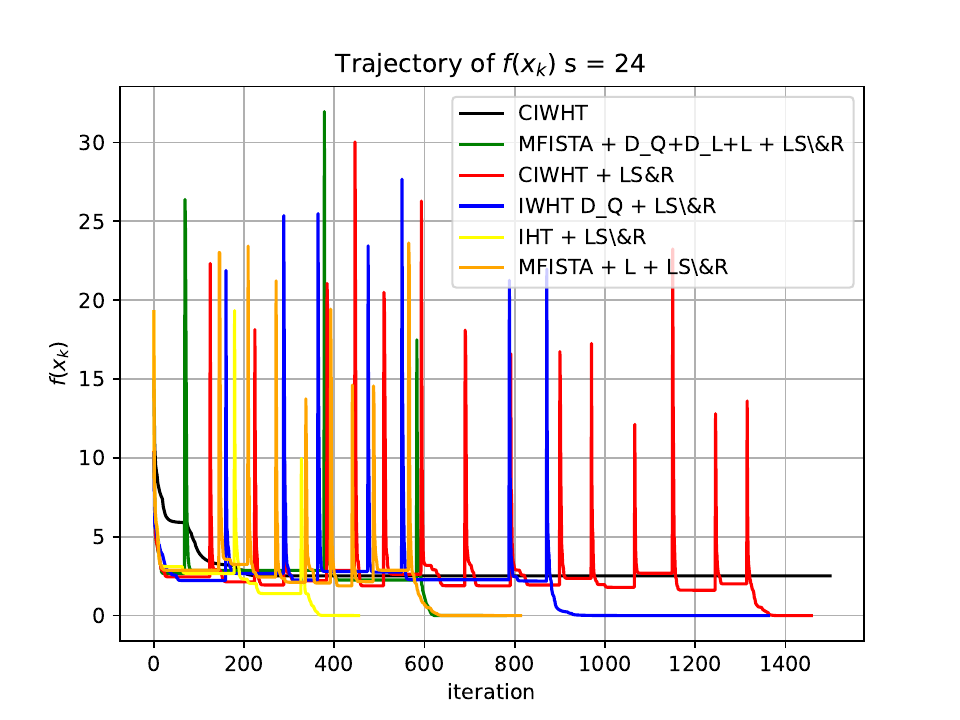}
{Trajectory of $f(x_k)$ for different sparsity levels, $A \in \mathbb{R}^{64 \times 256}$} \\
\includegraphics[width=.32\textwidth]{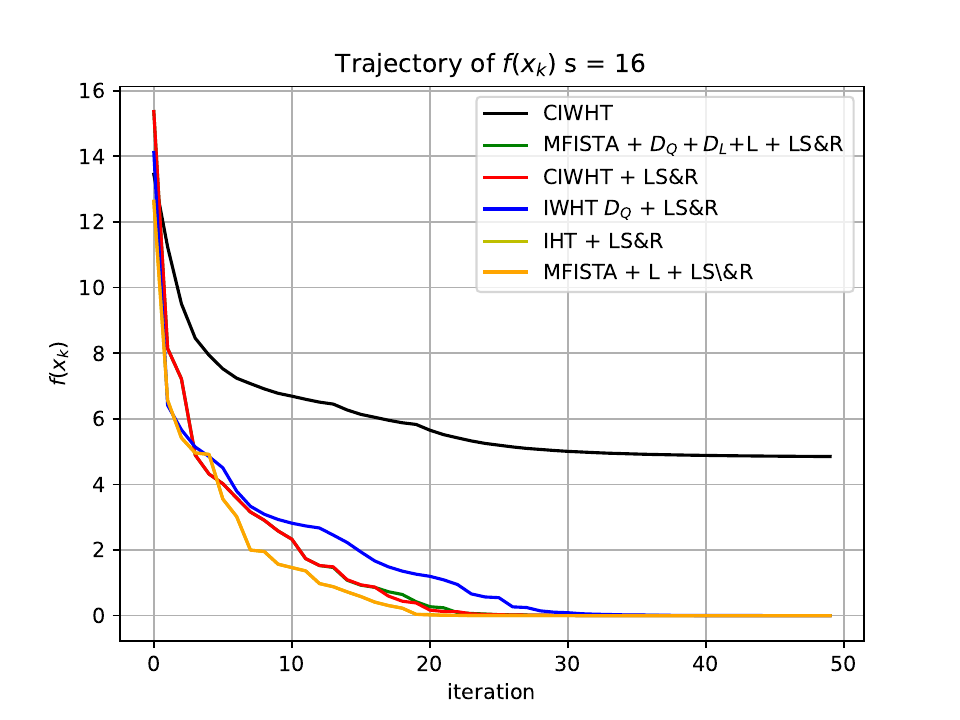}
\includegraphics[width=.32\textwidth]{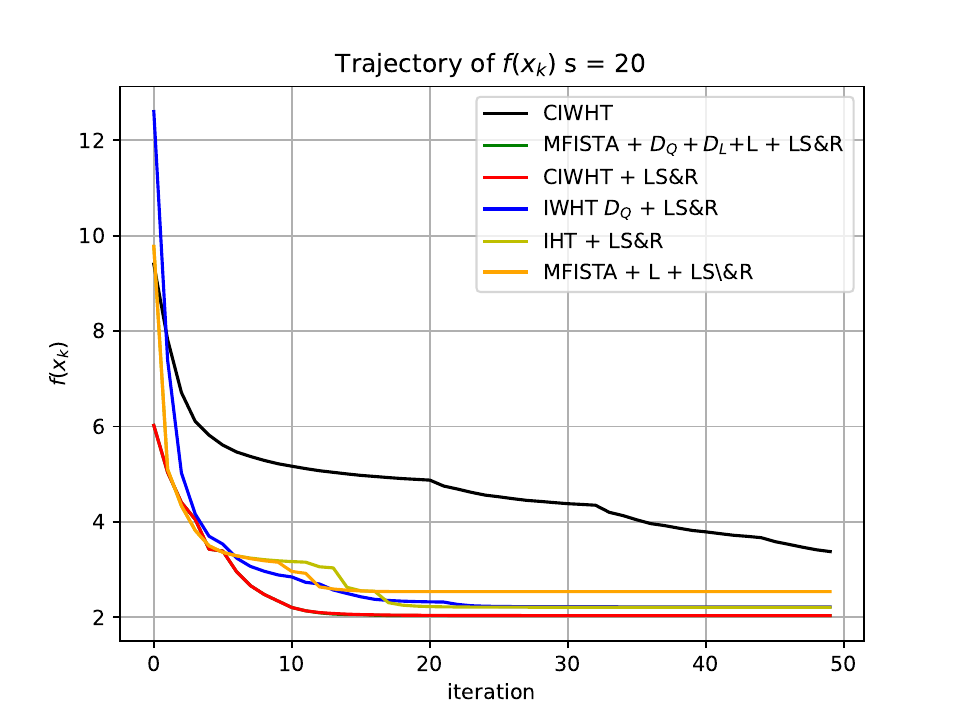}
\includegraphics[width=.32\textwidth]{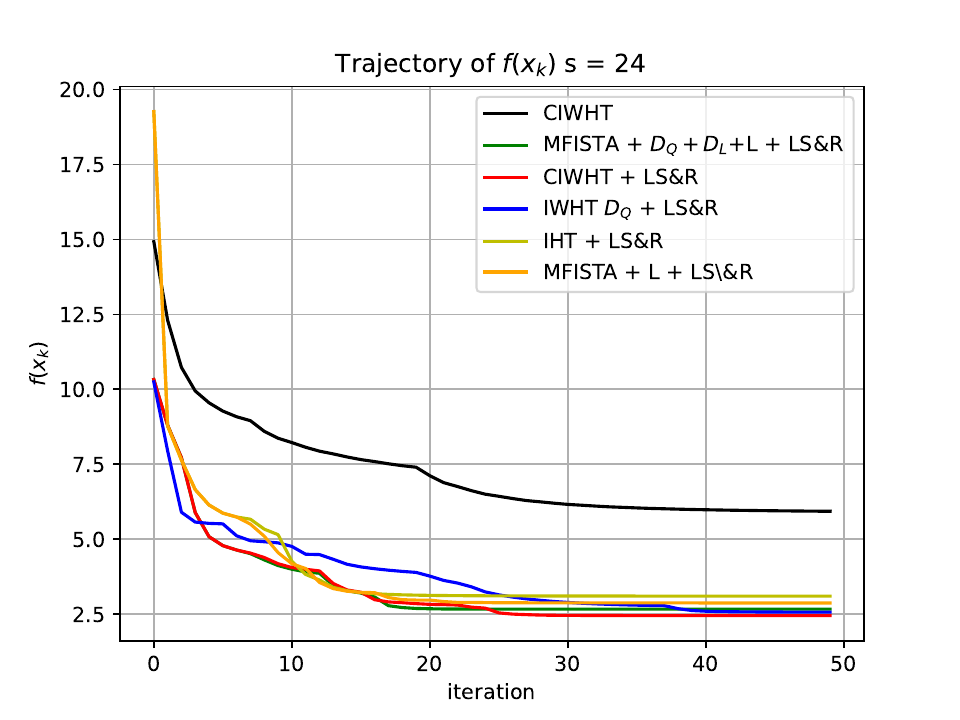}
{Early Trajectory of $f(x_k)$ for different sparsity levels, $A \in \mathbb{R}^{64 \times 256}$}
\caption{Trajectory of $f(x_k)$}
\label{fig:f_x_trajectory}
\end{figure}

\subsection{Computing a DSM}
\label{sec:numerical_compute_weights}
In this section, we show the numerical performance of algorithms described in Section \ref{sec:compute_weights} for computing a DSM. 

Firstly, we compared the performance of BCM algorithms, their parallel versions, and MOSEK \cite{mosek} through CVXPY interface \cite{diamond2016cvxpy}. We generated matrices $A$ of size $n = 250,500,...,5000$ and set $m = \floor*{\frac{n}{8}}$. Then set $C = A^TA$ for the models (\ref{eq:compute_D_L}), (\ref{eq:compute_D_Q}) or in particular their dual counterparts for the numerical algorithms, except for MOSEK, where using the primal model was much faster.  We generate 10 instances for each problem size. Figure \ref{fig:time_dsm} shows the mean CPU and wall time in seconds. 


\begin{figure}[ht]
\centering
\begin{subfigure}{.5\textwidth}
  \centering
  \includegraphics[width=1\linewidth]{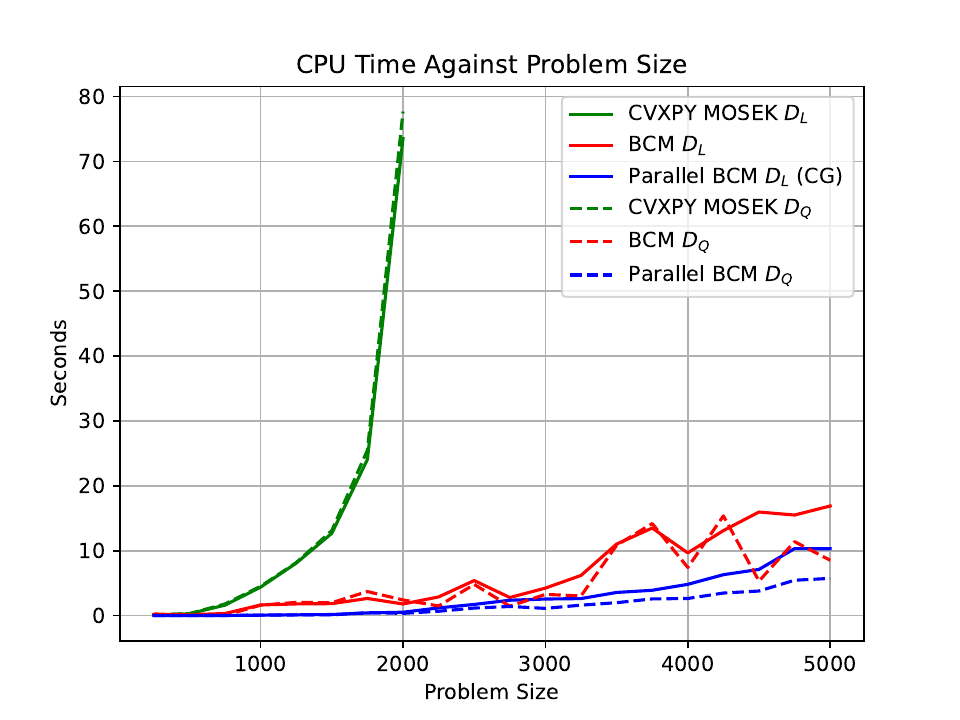}
  \caption{CPU Time in Seconds}
  \label{fig:cpu_dsm}
\end{subfigure}%
\begin{subfigure}{.5\textwidth}
  \centering
  \includegraphics[width=1\linewidth]{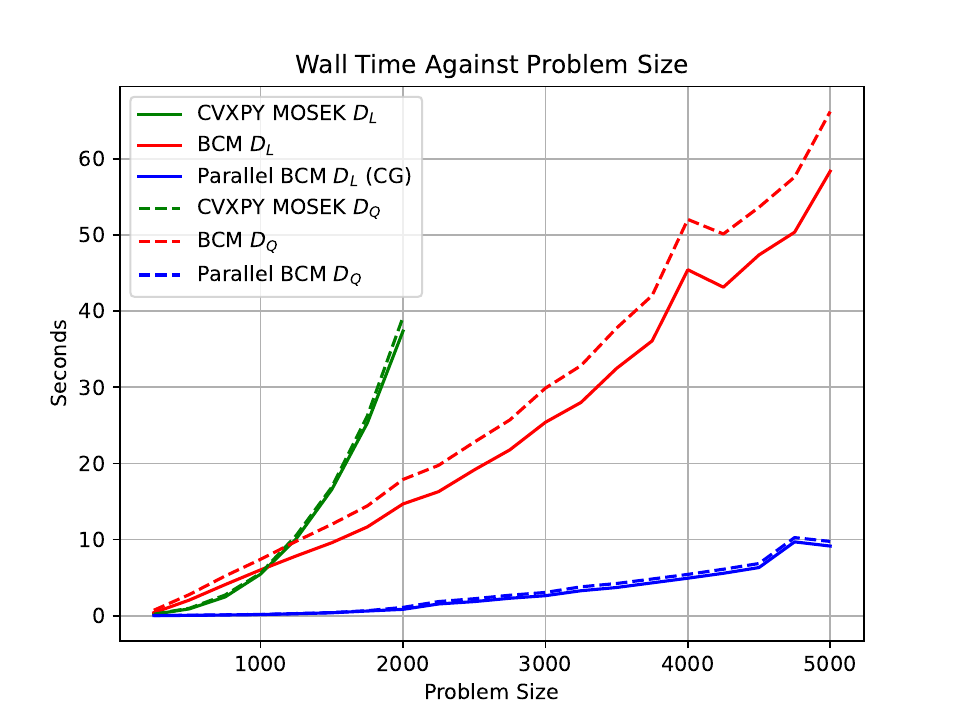}
  \caption{Wall Time in Seconds}
  \label{fig:wall_dsm}
\end{subfigure}
\caption{Cost of computing $D_L$ and $D_Q$ DSMs}
\label{fig:time_dsm}
\end{figure}

Figure \ref{fig:cpu_dsm} shows that the interior point solver is computationally much more demanding while BCM and the parallelized version require much less computational power. Additionally, Figure \ref{fig:wall_dsm} reveals the advantage of the parallelization; while having a similar CPU time, the wall time spent in the parallel version of the BCM algorithm is much less than the original algorithm. Although BCM scales much better than MOSEK for larger problems, the plot suggests that the MOSEK algorithm is faster for smaller problems. This is caused by a fixed number of first-order updates we apply in BCM, whereas interior-point methods check primal-dual conditions for early termination. Ultimately, both figures demonstrate that parallelized versions of the BCM algorithms scale the best in terms of both CPU and wall time. 

Furthermore, Figure \ref{fig:rel_error} illustrates the trajectory of relative error on both primal variable $w$ and dual variable $Z$ for problem sizes $n = 500,1000,2000$ for a single test run. 

\begin{figure}[ht]
\centering
\includegraphics[width=.32\textwidth]{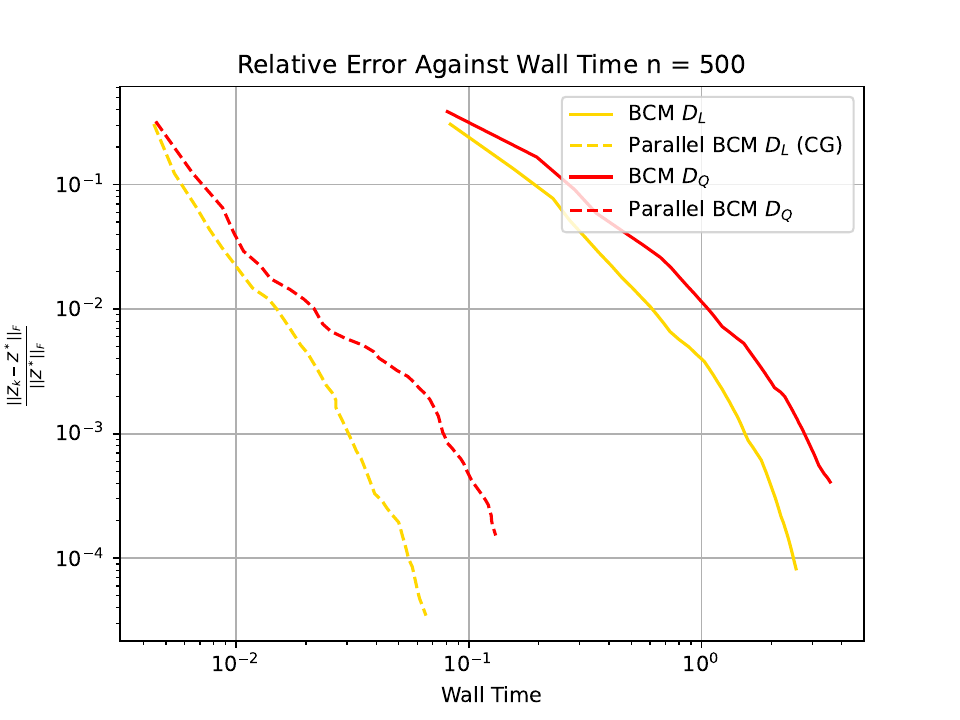}
\includegraphics[width=.32\textwidth]{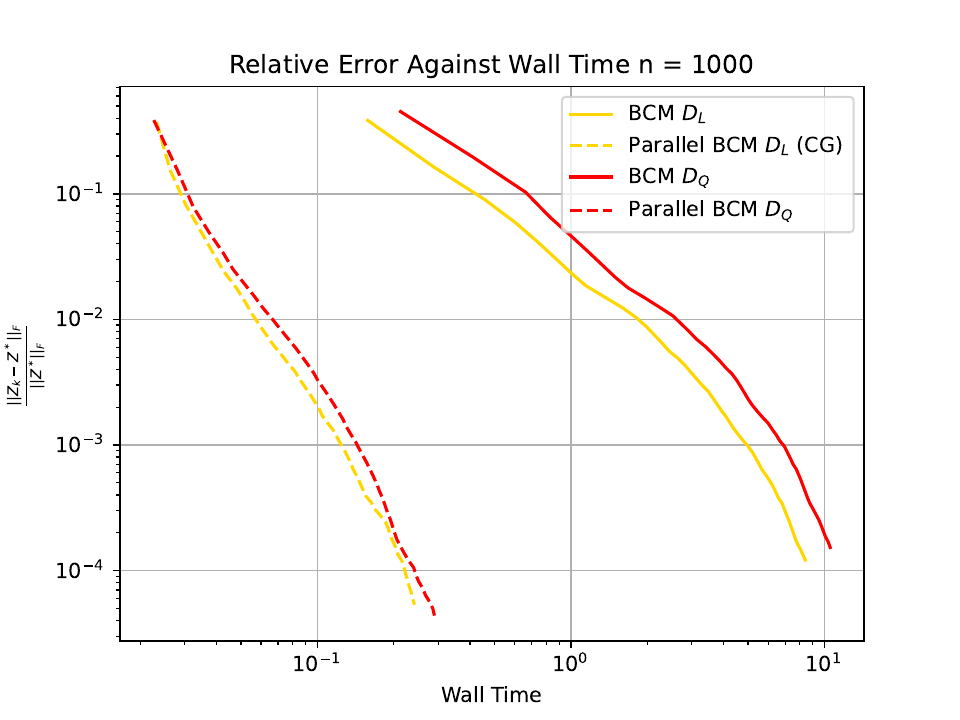}
\includegraphics[width=.32\textwidth]{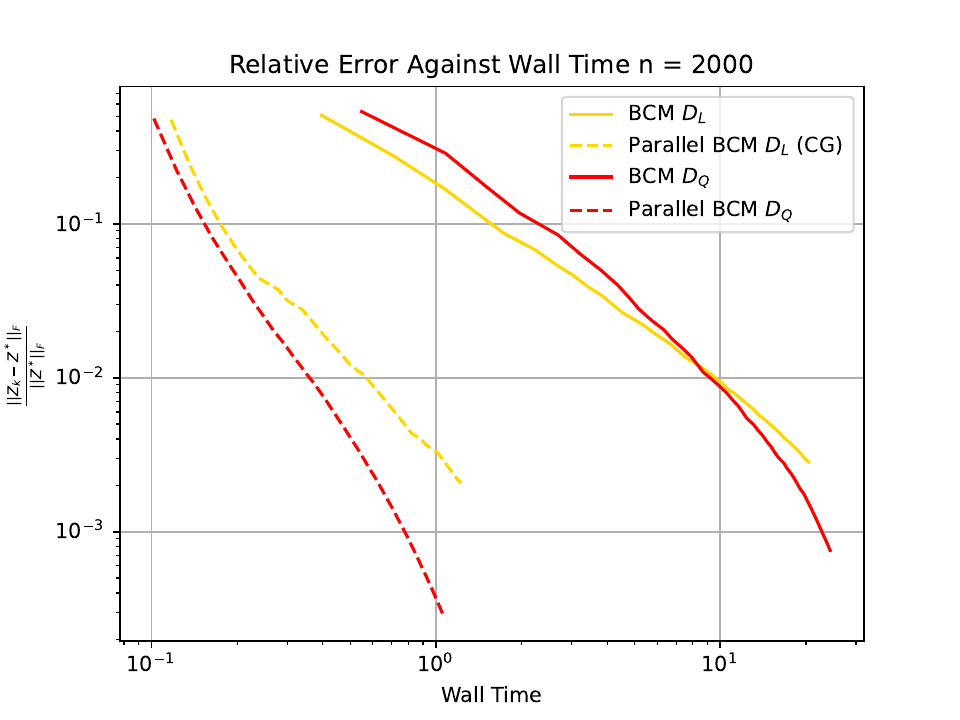}
{Relative Error of the Dual Variable $Z$ For Different Problem Sizes} \\
\includegraphics[width=.32\textwidth]{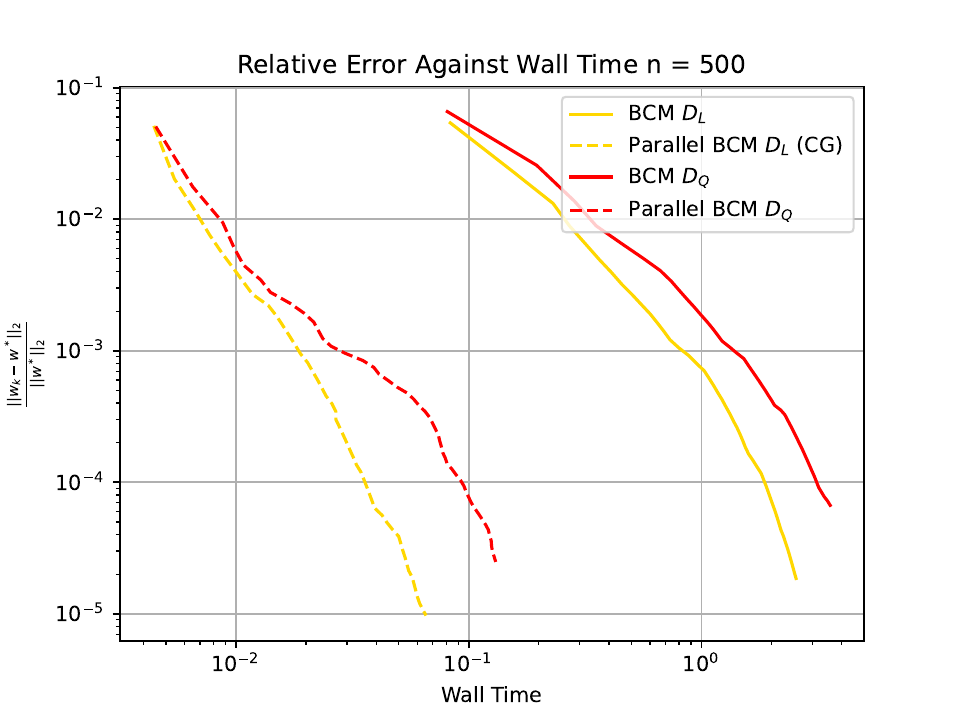}
\includegraphics[width=.32\textwidth]{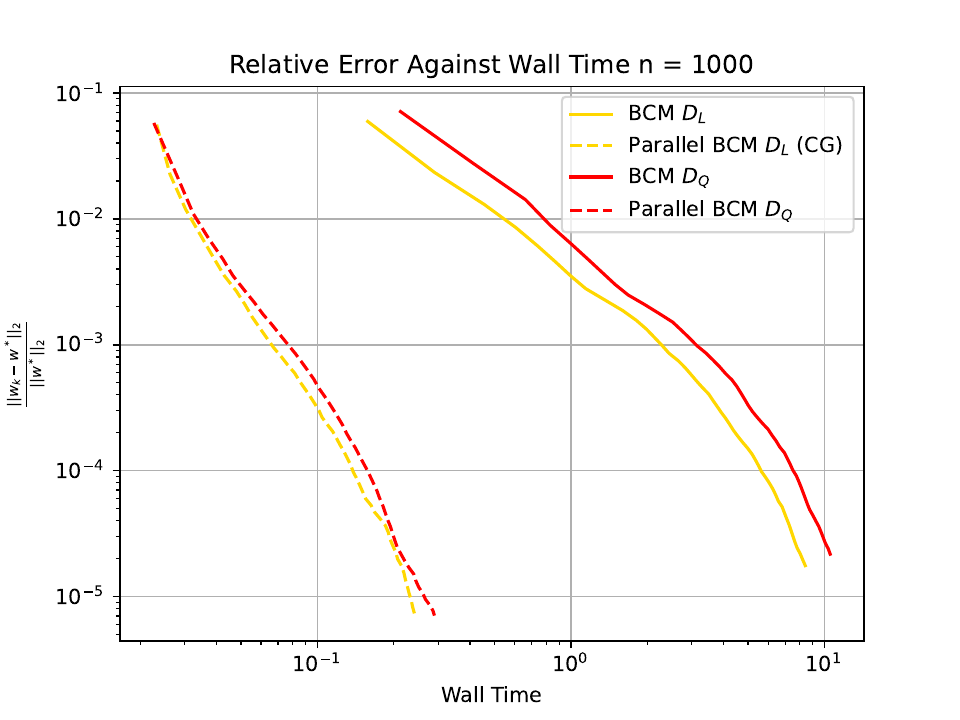}
\includegraphics[width=.32\textwidth]{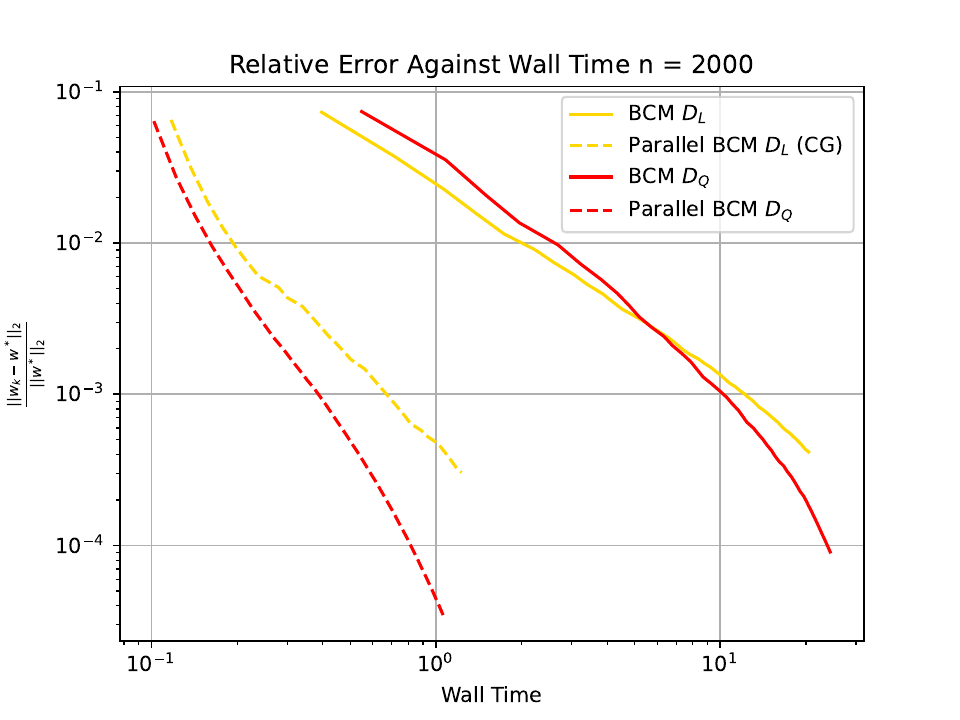}
{Relative Error of the Primal Variable $w$ For Different Problem Sizes}
\caption{Relative error trajectory for various problem sizes}
\label{fig:rel_error}
\end{figure}

All plots in Figure \ref{fig:rel_error} are almost identical, displaying remarkable improvement gained by parallelizing the BCM algorithm. Therefore, we empirically motivate these algorithms for the SDP templates \ref{eq:compute_D_L_dual} and \ref{eq:compute_D_Q_dual}.

\section{Conclusion}

In this paper, we proposed novel extensions to well-known hard-thresholding type algorithms via relative smoothness for sparsity-constrained nonlinear optimization by using Bregman distances. We analyzed the theoretical properties of the IWHT algorithm and proved new convergence results under a mild condition that extends previous results, which shows that IWHT behaves almost like the classical gradient descent or Bregman proximal gradient algorithm. We have drawn a connection between CW-optimality and Bregman stationarity. Then, we proposed a procedure to compute a DSM to equip the novel algorithms. Numerical findings presented in the last section support our theoretical results and show the improvements over other algorithms as a framework. We believe methods developed to construct new algorithms could also be adapted to group sparsity settings to boost the recovery, a topic which we leave for future research.  

\bibliographystyle{siamplain}

\begin{thebibliography}{10}

\bibitem{absil_rtr}
{\sc P.-A. Absil, C.~Baker, and K.~Gallivan}, {\em Trust-region methods on riemannian manifolds}, Foundations of Computational Mathematics, 7 (2006), p.~303–330, \url{https://doi.org/10.1007/s10208-005-0179-9}, \url{http://dx.doi.org/10.1007/s10208-005-0179-9}.

\bibitem{agarwal2010fast}
{\sc A.~Agarwal, S.~Negahban, and M.~J. Wainwright}, {\em Fast global convergence rates of gradient methods for high-dimensional statistical recovery}, Advances in Neural Information Processing Systems, 23 (2010).

\bibitem{bpg_alternating_nmf}
{\sc M.~Ahookhosh, L.~T.~K. Hien, N.~Gillis, and P.~Patrinos}, {\em Multi-block bregman proximal alternating linearized minimization and its application to orthogonal nonnegative matrix factorization}, Computational Optimization and Applications, 79 (2021), pp.~681--715.

\bibitem{mosek}
{\sc M.~ApS}, {\em The MOSEK optimization toolbox for Python manual. Version 10.1.}, 2024, \url{https://docs.mosek.com/latest/pythonapi/index.html}.

\bibitem{bahmani2013greedy}
{\sc S.~Bahmani, B.~Raj, and P.~T. Boufounos}, {\em Greedy sparsity-constrained optimization}, The Journal of Machine Learning Research, 14 (2013), pp.~807--841.

\bibitem{bbcsicopt}
{\sc H.~Bauschke, J.~Borwein, and P.~Combettes}, {\em Bregman monotone optimization algorithms}, SIAM Journal on Control and Optimization, 42 (2003), pp.~596--636.

\bibitem{bregman_descent}
{\sc H.~H. Bauschke, J.~Bolte, and M.~Teboulle}, {\em A descent lemma beyond lipschitz gradient continuity: First-order methods revisited and applications.}, Math. Oper. Res., 42 (2017), pp.~330--348, \url{http://dblp.uni-trier.de/db/journals/mor/mor42.html#BauschkeBT17}.

\bibitem{Bauschke2017}
{\sc H.~H. Bauschke and P.~L. Combettes}, {\em Convex Analysis and Monotone Operator Theory in Hilbert Spaces}, Springer International Publishing, 2017, \url{https://doi.org/10.1007/978-3-319-48311-5}, \url{http://dx.doi.org/10.1007/978-3-319-48311-5}.

\bibitem{first_order_amir_beck}
{\sc A.~Beck}, {\em First-Order Methods in Optimization}, SIAM, 2017.

\bibitem{amir_beck_sparse_opt}
{\sc A.~Beck and Y.~C. Eldar}, {\em Sparsity constrained nonlinear optimization: Optimality conditions and algorithms}, SIAM Journal on Optimization, 23 (2013), pp.~1480--1509.

\bibitem{mfista}
{\sc A.~Beck and M.~Teboulle}, {\em Fast gradient-based algorithms for constrained total variation image denoising and deblurring problems}, IEEE Transactions on Image Processing, 18 (2009), pp.~2419--2434, \url{https://doi.org/10.1109/TIP.2009.2028250}.

\bibitem{berg-friedlander}
{\sc E.~Berg and M.~Friedlander}, {\em Sparse optimization with least squares constraints}, SIAM Journal on Optimization, 21 (2011), pp.~1201--1229.

\bibitem{bertsekas1997nonlinear}
{\sc D.~P. Bertsekas}, {\em Nonlinear programming}, Athena Scientific, 1995.

\bibitem{blumsignal}
{\sc T.~Blumensath}, {\em Accelerated iterative hard thresholding}, Signal Processing, 92 (2012), pp.~752--756.

\bibitem{Blumensath2012CompressedSW}
{\sc T.~Blumensath}, {\em Compressed sensing with nonlinear observations and related nonlinear optimization problems}, IEEE Transactions on Information Theory, 59 (2012), pp.~3466--3474, \url{https://api.semanticscholar.org/CorpusID:14200372}.

\bibitem{blumdaviesharmonic}
{\sc T.~Blumensath and M.~Davies}, {\em Iterative hard thresholding for compressed sensing}, Applied Computational and Harmonic Analysis, 27 (2009), pp.~265--274.

\bibitem{blumdavies-selected}
{\sc T.~Blumensath and M.~Davies}, {\em Normalized iterative hard thresholding: guaranteed stability and performance}, IEEE Journal on Selected Topics in Signal Processing, 4 (2010), pp.~298--309.

\bibitem{Blumensath-davies}
{\sc T.~Blumensath and M.~E. Davies}, {\em Iterative theresholding for sparse approximations}, Journal of Fourier Analysis and Applications, 14 (2008), pp.~629--654.

\bibitem{proximal_alternating_linearized}
{\sc J.~Bolte, S.~Sabach, and M.~Teboulle}, {\em Proximal alternating linearized minimization for nonconvex and nonsmooth problems}, Mathematical Programming, 146 (2014), pp.~459--494.

\bibitem{bregman_descent_phase_retrieval}
{\sc J.~Bolte, S.~Sabach, M.~Teboulle, and Y.~Vaisbourd}, {\em First order methods beyond convexity and lipschitz gradient continuity with applications to quadratic inverse problems}, SIAM Journal on Optimization, 28 (2018), pp.~2131--2151.

\bibitem{bolteteboulledynamic}
{\sc J.~Bolte and M.~Teboulle}, {\em Barrier operators and associated gradient-like dynamical systems for constrained minimization problems}, SIAM Journal on Control and Optimization, 42 (2003), pp.~1266--1292.

\bibitem{boumal2016non}
{\sc N.~Boumal, V.~Voroninski, and A.~Bandeira}, {\em The non-convex burer-monteiro approach works on smooth semidefinite programs}, Advances in Neural Information Processing Systems, 29 (2016).

\bibitem{bregman_original}
{\sc L.~Bregman}, {\em The relaxation method of finding the common point of convex sets and its application to the solution of problems in convex programming}, USSR Computational Mathematics and Mathematical Physics, 7 (1967), pp.~200--217, \url{https://doi.org/https://doi.org/10.1016/0041-5553(67)90040-7}, \url{https://www.sciencedirect.com/science/article/pii/0041555367900407}.

\bibitem{burer_monteiro}
{\sc S.~Burer and R.~D. Monteiro}, {\em A nonlinear programming algorithm for solving semidefinite programs via low-rank factorization}, Mathematical Programming, 95 (2003), p.~329–357, \url{https://doi.org/10.1007/s10107-002-0352-8}, \url{http://dx.doi.org/10.1007/s10107-002-0352-8}.

\bibitem{candes-eldar-needell}
{\sc E.~Cand\`es, Y.~Eldar, and D.~Needell}, {\em Compressed sensing with coherent and redundant dictionaries}, Applied Computational and Harmonic Analysis, 31 (2011), pp.~59--73.

\bibitem{candes2005l1}
{\sc E.~Cand{\`e}s, J.~Romberg, et~al.}, {\em l1-magic: Recovery of sparse signals via convex programming}, URL: www. acm. caltech. edu/l1magic/downloads/l1magic. pdf, 4 (2005), p.~16.

\bibitem{candesrip}
{\sc E.~J. Cand\`es}, {\em The restricted isometry property and its implications for compressed sensing}, Comptes Rendus Mathématiques, 346 (2008), pp.~589--592.

\bibitem{candes2006robust}
{\sc E.~J. Cand{\`e}s, J.~Romberg, and T.~Tao}, {\em Robust uncertainty principles: Exact signal reconstruction from highly incomplete frequency information}, IEEE Transactions on information theory, 52 (2006), pp.~489--509.

\bibitem{candes2005decoding}
{\sc E.~J. Cand\`es and T.~Tao}, {\em Decoding by linear programming}, IEEE transactions on information theory, 51 (2005), pp.~4203--4215.

\bibitem{censorzenios}
{\sc Y.~Censor and S.~Zenios}, {\em Proximal minimization algorithm with d-functions}, Journal of Optimization Theory and Applications, 73 (1992), pp.~451--464.

\bibitem{chen_teboulle1993}
{\sc G.~Chen and M.~Teboulle}, {\em Convergence analysis of a proximal-like minimization algorithm using bregman functions}, SIAM Journal on Optimization, 3 (1993), pp.~538--543, \url{https://doi.org/10.1137/0803026}, \url{https://doi.org/10.1137/0803026}, \url{https://arxiv.org/abs/https://doi.org/10.1137/0803026}.

\bibitem{diamond2016cvxpy}
{\sc S.~Diamond and S.~Boyd}, {\em {CVXPY}: {A} {P}ython-embedded modeling language for convex optimization}, Journal of Machine Learning Research, 17 (2016), pp.~1--5.

\bibitem{donoho2006compressed}
{\sc D.~L. Donoho}, {\em Compressed sensing}, IEEE Transactions on information theory, 52 (2006), pp.~1289--1306.

\bibitem{eckstein}
{\sc J.~Eckstein}, {\em Nonlinear proximal point algorithms using bregman functions, with applications to convex programming}, Mathematics of Operations Research, 18 (1993), pp.~202--226.

\bibitem{cnv_BM_SDP}
{\sc M.~A. Erdo\~gdu, A.~\"Ozda\~glar, P.~A. Parrilo, and N.~D. Vanli}, {\em Convergence rate of block-coordinate maximization burer–monteiro method for solving large sdps}, Mathematical Programming, 195 (2021), p.~243–281, \url{https://doi.org/10.1007/s10107-021-01686-3}, \url{http://dx.doi.org/10.1007/s10107-021-01686-3}.

\bibitem{figueiredo2007gradient}
{\sc M.~A. Figueiredo, R.~D. Nowak, and S.~J. Wright}, {\em Gradient projection for sparse reconstruction: Application to compressed sensing and other inverse problems}, IEEE Journal of selected topics in signal processing, 1 (2007), pp.~586--597.

\bibitem{foucart-siam}
{\sc S.~Foucart}, {\em Hard thresholding pursuit: an algorithm for compressive sensing}, SIAM Journal on Numerical Analysis, 49 (2011), pp.~2543--2563.

\bibitem{foucart-rauhut}
{\sc S.~a. Foucart and H.~Rauhut}, {\em A mathematical introduction to compressive sensing}, vol.~44, Springer, 2013.

\bibitem{javanmard2016phase}
{\sc A.~Javanmard, A.~Montanari, and F.~Ricci-Tersenghi}, {\em Phase transitions in semidefinite relaxations}, Proceedings of the National Academy of Sciences, 113 (2016), pp.~E2218--E2223.

\bibitem{journee2010low}
{\sc M.~Journ{\'e}e, F.~Bach, P.-A. Absil, and R.~Sepulchre}, {\em Low-rank optimization on the cone of positive semidefinite matrices}, SIAM Journal on Optimization, 20 (2010), pp.~2327--2351.

\bibitem{block_bregman_bpg_nmf}
{\sc L.~T. Khanh~Hien, D.~N. Phan, N.~Gillis, M.~Ahookhosh, and P.~Patrinos}, {\em Block bregman majorization minimization with extrapolation}, SIAM Journal on Mathematics of Data Science, 4 (2022), pp.~1--25.

\bibitem{bi_smooth_nmf}
{\sc Q.~Li, Z.~Zhu, G.~Tang, and M.~B. Wakin}, {\em Provable bregman-divergence based methods for nonconvex and non-lipschitz problems}, arXiv preprint arXiv:1904.09712,  (2019).

\bibitem{relatively_smooth_opt}
{\sc H.~Lu, R.~M. Freund, and Y.~Nesterov}, {\em Relatively smooth convex optimization by first-order methods, and applications}, SIAM Journal on Optimization, 28 (2018), pp.~333--354.

\bibitem{lu-cone}
{\sc Z.~Lu}, {\em Iterative hard thresholding methods forl0 regularized convex cone programming}, Mathematical Programming, 147 (2014), pp.~125--154.

\bibitem{lu-zhang}
{\sc Z.~Lu and Y.~Zhang}, {\em Sparse approximation via penalty decomposition methods}, SIAM Journal on Optimization, 23 (2013), pp.~2448--2478.

\bibitem{mei2017solvingsdpssynchronizationmaxcut}
{\sc S.~Mei, T.~Misiakiewicz, A.~Montanari, and R.~I. Oliveira}, {\em Solving sdps for synchronization and maxcut problems via the grothendieck inequality}, 2017, \url{https://arxiv.org/abs/1703.08729}, \url{https://arxiv.org/abs/1703.08729}.

\bibitem{spca_np_hard}
{\sc B.~Moghaddam, Y.~Weiss, and S.~Avidan}, {\em Generalized spectral bounds for sparse {LDA}}, in Proceedings of the 23rd International Conference on Machine Learning, ICML '06, New York, NY, USA, 2006, Association for Computing Machinery, p.~641–648.

\bibitem{sparse_ls_np_hard}
{\sc B.~Natarajan}, {\em Sparse approximate solutions to linear systems}, SIAM Journal on Computing, 24 (1995), pp.~227--234.

\bibitem{needell2009cosamp}
{\sc D.~Needell and J.~A. Tropp}, {\em Cosamp: Iterative signal recovery from incomplete and inaccurate samples}, Applied and computational harmonic analysis, 26 (2009), pp.~301--321.

\bibitem{high_dimensional_decomposable_regularizer}
{\sc S.~N. Negahban, P.~Ravikumar, M.~J. Wainwright, and B.~Yu}, {\em {A Unified Framework for High-Dimensional Analysis of $M$-Estimators with Decomposable Regularizers}}, Statistical Science, 27 (2012), pp.~538 -- 557, \url{https://doi.org/10.1214/12-STS400}, \url{https://doi.org/10.1214/12-STS400}.

\bibitem{nesterov2018lectures}
{\sc Y.~Nesterov et~al.}, {\em Lectures on convex optimization}, vol.~137, Springer, 2018.

\bibitem{nocedal1999numerical}
{\sc J.~Nocedal and S.~J. Wright}, {\em Numerical optimization}, Springer, 1999.

\bibitem{pan-luo-xiu}
{\sc L.~Pan, Z.~Luo, and N.~Xiu}, {\em Restricted robinson constraint qualification and optimality for cardinality-constrained cone programming}, Journal of Optimization Theory and Applications, 175 (2017), pp.~104--118.

\bibitem{pan-xiu-zhou}
{\sc L.~Pan, N.~Xiu, and S.~Zhou}, {\em On solutions of sparsity constrained optimization}, Journal of the Operations Research Society of China, 3 (2015), pp.~421--439.

\bibitem{pati1993orthogonal}
{\sc Y.~C. Pati, R.~Rezaiifar, and P.~S. Krishnaprasad}, {\em Orthogonal matching pursuit: Recursive function approximation with applications to wavelet decomposition}, in Proceedings of 27th Asilomar conference on signals, systems and computers, IEEE, 1993, pp.~40--44.

\bibitem{shalev2010trading}
{\sc S.~Shalev-Shwartz, N.~Srebro, and T.~Zhang}, {\em Trading accuracy for sparsity in optimization problems with sparsity constraints}, SIAM Journal on Optimization, 20 (2010), pp.~2807--2832.

\bibitem{teboullemor}
{\sc M.~Teboulle}, {\em Entropic proximal mappings with applications to nonlinear programming}, Mathematics of Operations Research, 17 (1992), pp.~670--690.

\bibitem{nmf_bregman_teboulle_yakov}
{\sc M.~Teboulle and Y.~Vaisbourd}, {\em Novel proximal gradient methods for nonnegative matrix factorization with sparsity constraints}, SIAM Journal on Imaging Sciences, 13 (2020), pp.~381--421, \url{https://doi.org/10.1137/19M1271750}, \url{https://doi.org/10.1137/19M1271750}, \url{https://arxiv.org/abs/https://doi.org/10.1137/19M1271750}.

\bibitem{teboulle2023elementary}
{\sc M.~Teboulle and Y.~Vaisbourd}, {\em An elementary approach to tight worst case complexity analysis of gradient based methods}, Mathematical Programming, 201 (2023), pp.~63--96.

\bibitem{tseng_convex_concave}
{\sc P.~Tseng}, {\em On accelerated proximal gradient methods for convex-concave optimization}, tech. report, MIT, Cambridge, MA, 2008.

\bibitem{mixing_method}
{\sc P.-W. Wang, W.-C. Chang, and J.~Z. Kolter}, {\em The mixing method: low-rank coordinate descent for semidefinite programming with diagonal constraints}, 2018, \url{https://arxiv.org/abs/1706.00476}, \url{https://arxiv.org/abs/1706.00476}.

\bibitem{xie-li-liang}
{\sc S.~Xie, J.~Li, and K.~Liang}, {\em k-sparse vector recovery via $\ell_1 - \alpha\ell_2$ local minimization}, Journal of Optimization Theory and Applications, 201 (2024), pp.~75--102.

\bibitem{prox-iht}
{\sc F.~Yang, Y.~Shen, and Z.~Liu}, {\em The proximal alternating iterative hard thresholding method for l0 minimization, with complexity $o(1/\sqrt{k})$}, Journal of Computational and Applied Mathematics, 311 (2017), pp.~115--129.

\bibitem{yang2011alternating}
{\sc J.~Yang and Y.~Zhang}, {\em Alternating direction algorithms for $\backslash$ell\_1-problems in compressive sensing}, SIAM journal on scientific computing, 33 (2011), pp.~250--278.

\bibitem{yinyu_lecture_notes}
{\sc Y.~Ye}, {\em Conic linear programming}.
\newblock Available at \url{https://web.stanford.edu/class/msande314/}, December 2004.
\newblock Revised, October 2017.

\bibitem{yuan_grad_hard_threshold}
{\sc X.-T. Yuan, P.~Li, and T.~Zhang}, {\em Gradient hard thresholding pursuit}, Journal of Machine Learning Research, 18 (2018), pp.~1--43.

\bibitem{yurtsever19}
{\sc A.~Yurtsever, O.~Fercoq, and V.~Cevher}, {\em A conditional-gradient-based augmented lagrangian framework}, in Proc. 36th Int. Conf. Machine Learning (ICML), 2019.

\bibitem{yurtsevercgal}
{\sc A.~Yurtsever, J.~A. Tropp, O.~Fercoq, M.~Udell, and V.~Cevher}, {\em Scalable semidefinite programming}, SIAM Journal on Mathematics of Data Science, 3 (2021), pp.~171--200.

\bibitem{gpnp}
{\sc S.~Zhou}, {\em Gradient projection newton pursuit for sparsity constrained optimization}, Applied and Computational Harmonic Analysis, 61 (2022), pp.~75--100.

\bibitem{nhtp}
{\sc S.~Zhou, N.~Xiu, and H.-D. Qi}, {\em Global and quadratic convergence of newton hard-thresholding pursuit}, The Journal of Machine Learning Research, 22 (2021), pp.~599--643.

\end{thebibliography}

\appendix
\section{Convergence Results for Bregman Distance}

In this appendix, we first define the Bregman Proximal Gradient (BPG) algorithm for the sparsity constrained problem then present convergence related results when the Bregman function is separable. Convergence in objective and rate of convergence in the convex case was analyzed in \cite{bregman_descent}, and the non-convex case in \cite{bregman_descent_phase_retrieval}. For the sake of completeness, we describe the generic convergence result obtained by using the Bregman framework analysis. Stationarity analysis and global convergence without using KL-property for the sparsity-constrained case is new. Our order of convergence results follow Section \ref{sec:convergence}. 

We present the Bregman Proximal Gradient algorithm \ref{alg:bpg}, where the inner problem can be solved with (\ref{eq:bregman_descent_seperable_solution}).
\begin{algorithm}[ht]
\caption{Bregman Proximal Gradient Algorithm (BPG)}
\label{alg:bpg}
\begin{algorithmic}[1]
\STATE{\textbf{Input:} $h,L > 0, x_0 \in C_s,s $}
\FOR{$k = 1,2, ...,$}
\STATE{$x_{k+1} \in \underset{x \in C_s}\argmin \ \nabla f(x_k)^T(x-x_k) + LD_h(x,x_{k})$}
\ENDFOR
\end{algorithmic}
\end{algorithm}
To make the Bregman Proximal Gradient step in \ref{alg:bpg} well-defined, we have to make the following assumption:
\begin{assumption}
\label{as:supercoercive_bregman}
$h(x)$ is a supercoercive function. In other words, we have
\begin{equation}
\label{eq:super_coercive}
\begin{array}{ll@{}ll}
\underset{||x|| \to \infty} \lim \dfrac{h(x)}{||x||} = \infty.
\end{array}
\end{equation}
\end{assumption}
This condition ensures a minimizer exists in the BPG step \cite{bregman_descent,bregman_descent_phase_retrieval}. Characterization of the well-posedness of the Bregman Proximal Gradient step is fairly complicated. Instead of dealing with these technical issues, we make this mild assumption that is usually met in practice. 

In addition to the well-defined nature of the algorithm, the supercoercivity assumption on the function $h$ also allows us to express Bregman Stationarity explicitly.
\begin{lemma}
\label{lemma:BPG_stationary_characterization}
Given a strictly convex function $h$, a point $x$ is $L$-Bregman Stationary if and only if
\begin{equation}
\label{eq:BPG_stationary_characterization}
\begin{array}{rl}
\nabla_i f(x) &= 0 \qquad \forall \, i \in I_1(x) \\
\underset{i \in I_1(x)}\min h_i^*(h_i^\prime (x_i)) &\geq \underset{j \in I_0(x)}\max h_j^*( h_j^\prime(0) - \frac{\nabla_j f(x)}{L}),
\end{array}
\end{equation}
where $h_j^*$ is the conjugate function defined by the conjugate (or Fenchel-Legendre) transform
\begin{equation}
\label{eq:fenchel_conjugate}
\begin{array}{ll@{}ll}
h^*(y) = \underset{x}\sup \ y^Tx - h(x).
\end{array}
\end{equation}
\end{lemma}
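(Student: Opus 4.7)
The plan is to unfold the Bregman stationarity condition using the explicit separable solution procedure (\ref{eq:bregman_descent_seperable_solution}) already established for the inner minimization, and then translate the resulting descent–value inequalities into the conjugate–function form stated. Throughout I use the standard normalization $h_i(0)=0$; since Bregman distances and their gradients are invariant under additive constants in each $h_i$, and since the combination $h_i^*(\,\cdot\,)+h_i(0)$ is itself shift–invariant (indeed, if $\tilde h_i=h_i+c$ then $\tilde h_i^*=h_i^*-c$), this normalization is without loss of generality.

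First I would invoke (\ref{eq:bregman_descent_seperable}) to rewrite the Bregman stationarity minimization as a separable problem in the coordinate subproblems
\[
\phi_i(z)=\nabla_i f(x)(z-x_i)+L D_{h_i}(z,x_i),
\]
whose unconstrained minimizers are
\[
u_i=(h_i^\prime)^{-1}\!\left(h_i^\prime(x_i)-\frac{\nabla_i f(x)}{L}\right),
\]
well-defined under strict convexity and Assumption~\ref{as:supercoercive_bregman}. The cardinality constraint is then enforced via (\ref{eq:bregman_descent_seperable_solution}): keep the $s$ coordinates with largest descent gains $\delta_i:=\phi_i(0)-\phi_i(u_i)\ge 0$, set the rest to zero.

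Next I would carry out the ``only if'' direction. For $x$ itself to be a selected point, every $i\in I_1(x)$ must survive with $u_i=x_i$; the first–order condition immediately gives $\nabla_i f(x)=0$, which is the first line of the claim. The top–$s$ selection rule then forces $\min_{i\in I_1(x)}\delta_i\ge \max_{j\in I_0(x)}\delta_j$. The key computation is to evaluate these $\delta_i$ in closed form via the Fenchel–Young equality $u\,h_i^\prime(u)-h_i(u)=h_i^*(h_i^\prime(u))$. For $i\in I_1(x)$, plugging in $u_i=x_i$ and $\nabla_i f(x)=0$ yields
\[
\delta_i = L\bigl[h_i(0)-h_i(x_i)+x_i h_i^\prime(x_i)\bigr] = L\bigl[h_i(0)+h_i^*(h_i^\prime(x_i))\bigr].
\]
For $j\in I_0(x)$, using $x_j=0$ and $h_j^\prime(u_j)=h_j^\prime(0)-\nabla_j f(x)/L$, a parallel computation gives
\[
\delta_j = L\bigl[h_j(0)+h_j^*\bigl(h_j^\prime(0)-\tfrac{\nabla_j f(x)}{L}\bigr)\bigr].
\]
Under the normalization $h_i(0)=0$, the inequality $\min_{i\in I_1(x)}\delta_i\ge \max_{j\in I_0(x)}\delta_j$ becomes precisely the second line of (\ref{eq:BPG_stationary_characterization}). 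The ``if'' direction reverses the same chain: the two stated conditions make $x_i=u_i$ for $i\in I_1(x)$ and guarantee that the top–$s$ descent gains are realized on $I_1(x)$, so $x$ minimizes the separable problem on $C_s$.

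The main obstacle is the closed–form evaluation of the $\delta_i$'s via the Legendre transform and ensuring the shift–invariance argument is used consistently so the $h_i(0)$ terms cancel. A minor subtlety concerns the sparse case $\lvert I_1(x)\rvert<s$: here the minimum over $I_1(x)$ is interpreted as $+\infty$ and the inequality degenerates to $\delta_j=0$ for all $j\in I_0(x)$, which via the first–order condition is equivalent to $\nabla_j f(x)=0$ for those $j$, consistent with the discussion following Lemma~\ref{lemma:D_stationary_point_characterization}.
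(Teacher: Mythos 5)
Your proposal is correct and follows essentially the same route as the paper's proof: reduce to the separable coordinate subproblems $\phi_i$, derive $\nabla_i f(x)=0$ on $I_1(x)$ from the first-order condition, evaluate the descent gains $\phi_i(0)-\phi_i(u_i)$ in closed form via the Fenchel--Young equality under the normalization $h_i(0)=0$, and read off the min--max inequality from the top-$s$ selection rule. Your explicit shift-invariance justification for the normalization and the treatment of the $\lvert I_1(x)\rvert<s$ case are small additions beyond what the paper writes out, but the argument is the same.
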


\begin{proof}
Using the definition of Bregman Stationarity (\ref{eq:bregman_stationarity}) we get the following optimality condition for non-zero variables;
\begin{equation}
\label{eq:BPG_stationary_characterization1}
\begin{array}{ccc}
\nabla_i f(x) + Lh_i^\prime(x_i) - L h_i^\prime(x_i) = 0 &\forall i \in I_1(x) \\
\implies \nabla_i f(x) = 0 &\forall i \in I_1(x).
\end{array}
\end{equation}
This proves the first condition in (\ref{eq:BPG_stationary_characterization}). For the second condition, we need to use the separability of the problem and calculate the minimizer for each variable. Then to find the best sparse solution, we greedily compare the separable objective at the optimal value and the value at zero. Let $i \in I_1(x)$. Calculating the objective at zero and optimal value gives
\begin{equation}
\label{eq:BPG_stationary_characterization2}
\begin{array}{ccc}
\underset{y_i}\min \ \phi_i(y_i) = (\nabla_i f(x) - L h_i^\prime(x_i))(y_i-x_i) + Lh_i(y_i) - Lh_i(x_i) \\
\nabla_i f(x)  - L h_i^\prime(x_i) + L h_i^\prime(x_i) = 0 \\
y_i^* = x_i \\
\phi_i(y_i^*) = 0, \ \phi_i(0) = L h_i^\prime(x_i)x_i- L h_i(x_i) + Lh_i(0) = Lh_i^*(h_i^\prime(x_i)) + Lh_i(0).
\end{array}
\end{equation}
Going from the second to the third line, we used $\nabla_i f(x) = 0$ from (\ref{eq:BPG_stationary_characterization1}). In the last line, we used the conjugate formula. Making similar calculations for $j \in I_0(x)$ yields
\begin{equation}
\label{eq:BPG_stationary_characterization3}
\begin{array}{ccc}
\underset{y_j}\min \ \phi_j(y_j) = (\nabla_j f(x) - L h_j^\prime(x_j))y_j + Lh_j(y_j) - Lh_j(0) \\
\phi_j(y_j^*) = -L h_j^*(-\frac{\nabla_j f(x)}{L} + h_j^\prime(0)) - L h_j(0), \ \phi_j(0) = 0 .\\
\end{array}
\end{equation}
Since $x$ is $L$-Bregman Stationary, substituting a non-zero variable $i \in I_1(x)$ for a zero variable $j \in I_0(x)$ should not give a decrease in the extended descent equation (\ref{eq:bregman_descent_lemma}). Thus;
\begin{equation}
\label{eq:BPG_stationary_characterization4}
\begin{array}{rll}
-L h_j^*(-\frac{\nabla_j f(x)}{L} + h_j^\prime(0)) &+ Lh_i^*(h_i^\prime(x_i)) \geq 0, \forall i \in I_1(x), j \in I_0(x) \\
h_i^*(h_i^\prime(x_i)) &\geq h_j^*(-\frac{\nabla_j f(x)}{L} + h_j^\prime(0)), \forall i \in I_1(x), j \in I_0(x) \\
\underset{i \in I_1(x)}\min h_i^*(h_i^\prime(x_i)) &\geq \underset{j \in I_0(x)}\max h_j^*(-\frac{\nabla_j f(x)}{L} + h_j^\prime(0)).
\end{array}
\end{equation}
In the first line, to simplify the calculations a little bit, we use a convention that $h_i(0) = 0$. If this is not true, we can simply subtract $h_i(0)$ from the function which in return adds $h_i(0)$ to the conjugate. An alternative assumption to get the final nice expression would be assuming all $h_i$'s to be the same function. 

We note that the interchange between $\inf$ and $\min$ in conjugate function calculations above is justified by Assumption \ref{as:supercoercive_bregman}, which guarantees that a minimizer exists. 
\end{proof}

\begin{theorem}
\label{thm:BPG_convergence_subsequential}
Let $(x_k)$ be a sequence generated by the BPG Algorithm (Bregman Hard Thresholding) with a separable Bregman distance $h = \sum_{i=1}^n h_i(x_i)$. Let $L_hh-f$ be convex and $L_h < L$. Then, the following holds:
\begin{enumerate}
    \item $f(x_{k+1}) \leq  f(x_k) - (L - L_h) D_h(x_{k+1},x_{k})$. 
    \item $\sum_{i=0}^{\infty} D_h(x_{k+1},x_k) < \infty$, and hence $\lim_{k\to \infty} D_h(x_{k+1},x_k) = 0$.
    \item Minimum stationarity gap encountered decreases with rate $O(\frac{1}{n})$.
\end{enumerate}
\end{theorem}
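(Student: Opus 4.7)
The plan is to mirror the proof strategy of Lemma \ref{lemma:subsequential_convergence}, transferring the quadratic-norm bounds used for IWHT to the Bregman-distance setting via the Extended Descent Lemma (\ref{eq:bregman_descent_lemma}). The separability of $h$ is what makes the BPG subproblem solvable in practice via (\ref{eq:bregman_descent_seperable_solution}), but for the convergence statement itself the only properties I actually need are that $x_{k+1}$ minimizes the prescribed surrogate over $C_s$ and that $x_k \in C_s$ is always a feasible comparison point.

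For item (1), I would start from the Bregman descent lemma applied at $y = x_{k+1}$ and $x = x_k$, giving $f(x_{k+1}) \leq f(x_k) + \nabla f(x_k)^\top (x_{k+1}-x_k) + L_h D_h(x_{k+1},x_k)$. Since $x_{k+1}$ is optimal for the BPG subproblem and $x_k \in C_s$, comparison with the feasible choice $y = x_k$ yields $\nabla f(x_k)^\top(x_{k+1}-x_k) + L D_h(x_{k+1},x_k) \leq 0$, using $D_h(x_k,x_k)=0$. Combining these two inequalities cancels the inner-product term and leaves $f(x_{k+1}) \leq f(x_k) - (L - L_h) D_h(x_{k+1},x_k)$, which is exactly claim~1. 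Note $L - L_h > 0$ by assumption.

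For item (2), I would assume $f$ is bounded below on $C_s$ (a standard implicit requirement; otherwise the problem has value $-\infty$). Telescoping the descent inequality in claim~1 over $k = 0, \ldots, N$ gives $(L-L_h)\sum_{k=0}^N D_h(x_{k+1},x_k) \leq f(x_0) - f(x_{N+1}) \leq f(x_0) - \inf_{x \in C_s} f(x)$. Letting $N \to \infty$ shows the series converges, so $D_h(x_{k+1},x_k) \to 0$.

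For item (3), the standard ``min $\leq$ average'' trick suffices: from the telescoped bound,
\begin{equation*}
\min_{0 \leq k \leq n} D_h(x_{k+1},x_k) \;\leq\; \frac{1}{n+1} \sum_{k=0}^n D_h(x_{k+1},x_k) \;\leq\; \frac{f(x_0) - \inf_{x \in C_s} f(x)}{(n+1)(L - L_h)},
\end{equation*}
which is the claimed $O(1/n)$ rate. The only subtle point is that ``stationarity gap'' is being measured in $D_h$ rather than a norm, so one should verify (using strict convexity of $h$ as assumed in Lemma \ref{theorem:bregman_stationarity}) that this quantity vanishes exactly when consecutive iterates coincide, matching the notion of a Bregman fixed point. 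No serious obstacle arises; the proof is essentially a one-line adaptation of the IWHT argument, with the quadratic majorant $\tfrac12\|\cdot\|_D^2$ replaced throughout by $D_h(\cdot,\cdot)$ and the role of the projection step played by the BPG optimality condition.
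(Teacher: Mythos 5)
Your proposal is correct and follows essentially the same route as the paper: the descent inequality in item~1 is obtained by combining the Extended Descent Lemma with the optimality of $x_{k+1}$ against the feasible comparison point $x_k$ (the paper phrases this as adding $(L-L_h)D_h(x_{k+1},x_k)$ to both sides and bounding the resulting surrogate by $f(x_k)$, which is the same cancellation you perform), and items~2 and~3 then follow by telescoping under boundedness below, exactly as you argue.
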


\begin{proof}
As in the proof of Lemma \ref{lemma:subsequential_convergence}, all we need to show is the descent inequality for the Bregman distance which is very similar to (\ref{eq:iwht_descent_lemma})
\begin{equation}
\label{eq:bpg_descent_lemma}
\begin{array}{ll@{}ll}
f(x_{k+1}) + (L - L_h) D_h(x_{k+1},x_k) \! & \!\leq f(x_k) + \nabla f(x_k)^T(x_{k+1}-x_k) + LD_h(x_{k+1},x_k) \\ 
&\!\leq f(x_k).
\end{array}
\end{equation}
The rest of the statements follow from the first one.
\end{proof}

By assuming s-sparse coercivity (\ref{eq:sparse_coercive}), we guarantee the boundedness of the sequence $(x_k)$ and the existence of limit points. An analog of  Theorem \ref{lemma:limit_d_stationary} when using Bregman distance is as follows:

\begin{theorem}
\label{lemma:BPG_convergence_subsequential_limit}
In the setting of Lemma \ref{thm:BPG_convergence_subsequential}, any limit point of the sequence $(x_k)$ is $L$-Bregman Stationary. 
\end{theorem}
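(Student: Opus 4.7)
The approach mirrors the proof of Lemma \ref{lemma:limit_d_stationary}, adapted to the general (separable) Bregman setting. I argue by contradiction: suppose some limit point $\bar{x}$ of $(x_k)$ is not $L$-Bregman stationary. By Lemma \ref{theorem:bregman_stationarity}, $\bar{x}$ fails to minimize $\nabla f(\bar{x})^T(y - \bar{x}) + L D_h(y, \bar{x})$ over $y \in C_s$. Since plugging $y = \bar{x}$ into this expression gives $0$, there must exist $\tilde{y} \in C_s$ with
\begin{equation*}
\nabla f(\bar{x})^T(\tilde{y} - \bar{x}) + L D_h(\tilde{y}, \bar{x}) = -\delta
\end{equation*}
for some $\delta > 0$.

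Next, I pick a subsequence $(x_{n_k}) \to \bar{x}$ and use continuity of $\nabla f$ and of $D_h(\tilde{y}, \cdot)$ on $\interior \dom(h)$ (which is where all iterates live, by Assumption \ref{as:supercoercive_bregman} and the BPG update) to conclude that, for all large $k$,
\begin{equation*}
\nabla f(x_{n_k})^T(\tilde{y} - x_{n_k}) + L D_h(\tilde{y}, x_{n_k}) \leq -\delta/2 .
\end{equation*}
Because $x_{n_k+1}$ is, by definition of the BPG step, a minimizer of the same expression over $C_s$, the same upper bound holds after replacing $\tilde{y}$ by $x_{n_k+1}$.

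Then I apply the extended descent lemma with constant $L_h$ and split the Bregman term using $L_h = L - (L - L_h)$ to obtain
\begin{equation*}
f(x_{n_k+1}) \leq f(x_{n_k}) + \nabla f(x_{n_k})^T(x_{n_k+1} - x_{n_k}) + L D_h(x_{n_k+1}, x_{n_k}) - (L-L_h) D_h(x_{n_k+1}, x_{n_k}) ,
\end{equation*}
and plug in the previous bound to conclude $f(x_{n_k+1}) \leq f(x_{n_k}) - \delta/2$. This uniform per-iteration drop at infinitely many indices $n_k$ contradicts the monotone convergence of $f(x_k)$ guaranteed by Theorem \ref{thm:BPG_convergence_subsequential} (assuming $f$ is bounded below on $C_s$, which is standard and implicit).

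The main obstacle I expect is the technical book-keeping around the domain of $h$: $D_h(\cdot,\cdot)$ is defined only for second arguments in $\interior \dom(h)$, so the continuity step needs $\bar{x}$ (and hence $x_{n_k}$ for large $k$) to lie in that interior. Since $h$ is a kernel generating distance with $\dom(\partial h) = \interior \dom(h)$ and the BPG minimizer $x_{k+1}$ satisfies a first-order condition placing it in $\dom(\partial h)$, the iterates are in $\interior \dom(h)$ throughout, and for the separable kernels used in the paper the interior is also closed under taking limits of such sequences; for a general kernel this would warrant an additional regularity remark, but for the statement as written it suffices to invoke continuity of $\nabla h$ at $\bar{x}$.
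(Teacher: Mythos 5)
Your argument is correct and is essentially the paper's own proof: the paper likewise assumes a non-stationary limit point, extracts $y \in C_s$ with $\nabla f(\Bar{x})^T(y-\Bar{x}) + LD_h(y,\Bar{x}) = -\delta < 0$, preserves the inequality with $\delta/2$ along a convergent subsequence, and derives a contradiction with objective-value convergence via the extended descent lemma. Your write-up is in fact more complete than the paper's sketch (which defers to the argument of Lemma \ref{lemma:limit_d_stationary}), and your remark on the domain of $h$ is a sensible extra precaution.
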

\begin{proof}
Assume to the contrary, a limit point $\Bar{x}$ of the sequence $(x_k)$ is not $L$-Bregman Stationary. Then $\exists y \in C_s$ such that
\begin{equation}
\label{eq:bpg_bregman_stationary}
\begin{array}{ll@{}ll}
\nabla f(\Bar{x})^T(y-\Bar{x}) + LD_h(y,\Bar{x}) = - \delta < 0.
\end{array}
\end{equation}
Let $x_{n_k}$ be a subsequence that converges to $\Bar{x}$. As soon as $(x_{n_k})$ is sufficiently close to $\Bar{x}$, we preserve the inequality with $\delta/2$. Then this creates a contradiction to convergence in objective by using an argument similar to (\ref{eq:iwht_L_stationary2}).
\end{proof}

\begin{assumption}
For the point convergence, we will need the following assumptions:
\begin{enumerate}
    \item If $(x_k)$ converges to some $x$ in $\dom \textit{h}$ then $D_h(x,x_k) \to 0$.
    \item Conversely, if $D_h(x,x_k) \to 0$ for some $x \in \dom \textit{h}$, then $x_k \to x$.
    \item Further, if $D_h(x_{k+1},x_k) \to 0$, then $||x_{k+1}-x_k|| \to 0$.
\end{enumerate}
\end{assumption}

The first two assumptions are used in convergence analysis in \cite{bregman_descent}. We assume a further condition to be able to identify the correct sparsity pattern when a point is sufficiently close to some point. For instance, the third condition is violated for Burg entropy $-log(x)$ with $x_k = ak, \exists \, a > 0, k \geq 1$. These conditions are easily satisfied under a strong convexity assumption on $h(x)$. 

Finally, we have the convergence of the entire sequence for the BPG algorithm as well. 

\begin{theorem}
\label{thm:bpg_convergence}
Let $(x_k)$ be a sequence generated by the BPG Algorithm with a separable Bregman distance $h(x) = \sum_{i=1}^n h_i(x_i)$. Let $L_hh-f$ be convex and $L_h < L$. If there exists a limit point $\Bar{x}$ of $(x_k)$, where  $||\Bar{x}||_0 = s$, then the entire sequence converges to $\Bar{x}$.
\end{theorem}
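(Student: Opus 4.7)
The plan is to parallel the proof of Theorem \ref{thm:L_convergence} inside the Bregman framework: identify the support from some iteration onward, then reduce BPG to an unconstrained Bregman proximal gradient iteration on the active coordinates where a Fejér-type monotone distance inequality yields convergence of the whole sequence. As a starting point, Theorem \ref{lemma:BPG_convergence_subsequential_limit} shows that $\bar{x}$ is $L$-Bregman stationary, so Lemma \ref{lemma:BPG_stationary_characterization} provides $\nabla_i f(\bar{x}) = 0$ for every $i \in S := I_1(\bar{x})$ together with the separation
\begin{equation*}
\min_{i \in S} h_i^{*}\bigl(h_i^\prime(\bar{x}_i)\bigr) \;\geq\; \max_{j \in S^c} h_j^{*}\!\left(h_j^\prime(0) - \tfrac{1}{L}\nabla_j f(\bar{x})\right).
\end{equation*}
Extract a subsequence $(x_{n_k})$ with $x_{n_k} \to \bar{x}$. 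Theorem \ref{thm:BPG_convergence_subsequential} combined with Assumption item~(3) yields $\|x_{k+1} - x_k\| \to 0$, so once $k$ is large the consecutive iterates all stay near $\bar{x}$.

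The first key step is to show that the support is eventually frozen, that is, $I_1(x_{n_k+1}) = S$ for all sufficiently large $k$. Since $\|\bar{x}\|_0 = s$, every coordinate $\bar{x}_i$ with $i\in S$ is bounded away from zero. Inspecting the subproblem solver in \eqref{eq:bregman_descent_seperable_solution}, the scalar minimizers $u_i$ and the ranking quantities $\phi_i(0) - \phi_i(u_i)$ depend continuously on $x$ through $\nabla f$ and the derivatives $h_i^\prime$. At $x = \bar{x}$ the separation inequality places the indices in $S$ in the top-$s$ positions of this ranking; in the generic case of strict separation continuity propagates the ranking to a neighborhood of $\bar{x}$ and pins the support to $S$. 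Equality ties at the boundary can be handled by importing the argument of Theorem \ref{thm:sparsity_pattern_change}: perpetual support switching along the tail would contradict the objective value convergence of Theorem \ref{thm:BPG_convergence_subsequential}, since each change would trigger a uniform descent contradicting $f(x_k)\downarrow f(\bar{x})$.

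With the support locked to $S$, define the restricted objective $\bar{f}(z) = f(x)$ for $x_S = z,\ x_{S^c} = 0$, and the restricted separable kernel $\bar{h}(z) = \sum_{i \in S} h_i(z_i)$. The BPG iteration then collapses to the unconstrained Bregman proximal gradient step $z_{k+1} = \arg\min_z \langle \nabla \bar{f}(z_k), z - z_k\rangle + L\, D_{\bar{h}}(z, z_k)$, in which $\bar{z} := \bar{x}|_S$ is an unconstrained minimizer of $\bar{f}$ by virtue of $\nabla \bar{f}(\bar{z}) = 0$. Combining convexity of $\bar{f}$, the extended descent lemma, and the three-point identity with the optimality relation $\nabla \bar{f}(z_k) = L(\nabla \bar{h}(z_k) - \nabla \bar{h}(z_{k+1}))$ produces the Fejér-type bound
\begin{equation*}
D_{\bar{h}}(\bar{z}, z_{k+1}) \;\leq\; D_{\bar{h}}(\bar{z}, z_k) - \frac{L - L_h}{L}\, D_{\bar{h}}(z_{k+1}, z_k),
\end{equation*}
so $(D_{\bar{h}}(\bar{z}, z_k))$ is non-increasing. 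Because $z_{n_k} \to \bar{z}$ along the subsequence, Assumption item~(1) gives $D_{\bar{h}}(\bar{z}, z_{n_k}) \to 0$; monotonicity forces $D_{\bar{h}}(\bar{z}, z_k) \to 0$ for the full sequence, and Assumption item~(2) converts this into $z_k \to \bar{z}$. Together with the identified support this yields $x_k \to \bar{x}$.

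The main obstacle is the support-identification step, as Lemma \ref{lemma:BPG_stationary_characterization} delivers only a non-strict inequality. Generically strict separation disposes of the typical case, but a rigorous treatment of boundary equality requires excluding infinite support switching along the tail, which is precisely the content of the adaptation of Theorem \ref{thm:sparsity_pattern_change}. A secondary subtlety is that the Fejér-type inequality above needs $\bar{z}$ to be an unconstrained minimizer of $\bar{f}$ rather than just a critical point; this is exactly what $\nabla \bar{f}(\bar{z}) = 0$ supplies via convexity, so relative smoothness alone (without any relative strong convexity of $\bar{f}$) is enough to close the argument.
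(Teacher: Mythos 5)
Your overall mechanism is the same as the paper's: reduce to the restricted coordinates $S=I_1(\bar x)$, apply the three-point descent inequality (Lemma \ref{lemma:three_points_descent}) with $u=\bar z$ to obtain the Fej\'er-type monotonicity of $D_{\bar h}(\bar z, z_k)$, and then use the two Bregman-distance assumptions to pass from $D_{\bar h}(\bar z, z_{n_k})\to 0$ along the convergent subsequence to convergence of the whole sequence. That core is correct and matches the paper (the paper keeps the extra term $-\frac{1}{L}(\bar f(z_{n_k+1})-\bar f(\bar z))$ and discards it using monotonicity of $f(x_k)$, whereas you discard it using $\nabla\bar f(\bar z)=0$ plus convexity; both are fine).

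The place where your argument deviates, and where it has a gap as written, is the support-identification step. You try to freeze the support for all large $k$ \emph{before} the Fej\'er inequality is available, by combining Lemma \ref{lemma:BPG_stationary_characterization} with continuity of the ranking quantities, and you patch the non-strict separation by ``importing'' Theorem \ref{thm:sparsity_pattern_change}. Two problems: (i) the claim that ``once $k$ is large the consecutive iterates all stay near $\bar x$'' does not follow from $\|x_{k+1}-x_k\|\to 0$ alone (iterates can drift slowly away from a limit point), so the support cannot be locked for the entire tail prior to establishing the monotone distance inequality; (ii) Theorem \ref{thm:sparsity_pattern_change} does not assert that a support change triggers a uniform descent --- it only says the gradient norm at change points tends to zero --- so it does not exclude infinitely many switches, and it is proved for IWHT rather than BPG. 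Fortunately none of that machinery is needed: since $\|\bar x\|_0=s$, every $\bar x_i$ with $i\in S$ is bounded away from zero, so any $s$-sparse point sufficiently close to $\bar x$ has support exactly $S$; combined with $\|x_{k+1}-x_k\|\to 0$ this forces $I_1(x_{n_k+1})=I_1(x_{n_k})=S$ whenever $x_{n_k}$ is close enough to $\bar x$, the three-point inequality then applies to that single step, and the resulting decrease of $D_{\bar h}(\bar z,\cdot)$ together with Assumption items (1)--(2) keeps the next iterate close, so the argument propagates inductively. This interleaving is exactly how the paper closes the circularity you are worried about; with that substitution your proof coincides with the paper's.
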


\begin{proof}
\label{thm:BPG_convergence_proof}
Firstly, we need a slightly stronger descent inequality to show convergence than (\ref{eq:bpg_descent_lemma}).   Lemma \ref{lemma:three_points_descent} achieves this by using the three-points identity \cite{chen_teboulle1993}.
\begin{lemma}[Descent inequality for the Bregman distance \cite{bregman_descent,tseng_convex_concave}]
\label{lemma:three_points_descent}
Let $x^+$ be defined as follows
\begin{equation}
\label{eq:bregman_minimization}
\begin{array}{ll@{}ll}
x^+ \in \underset{y}\argmin \ \nabla f(x)^T(y-x) + LD_h(y,x).
\end{array}
\end{equation}
Then the following inequality holds $\forall u \in  \dom(h)$
\begin{equation}
\label{eq:bregman_minimization1}
\begin{array}{ll@{}ll}
\nabla f(x)^T(x^+ - x) + LD_h(x^+,x) + LD_h(u,x^+) \leq \nabla f(x)^T(u - x) + LD_h(u,x).
\end{array}
\end{equation}
Furthermore, as a consequence of the previous inequality, we can derive the following descent inequality 
\begin{equation}
\label{eq:bregman_minimization2}
\begin{array}{ll@{}ll}
f(x^+)-f(u) \leq LD_h(u,x) - LD_h(u,x^+) - (L-L_h)D_h(x^+,x) \ \forall u \in \dom(h).
\end{array}
\end{equation}
\end{lemma}
Let $(x_{n_k})$ be a subsequence that converges to $\Bar{x}$. For large $k$, we have $||x_{n_k}-\Bar{x}|| < \epsilon_1$, and by using Assumption 2 part 1 we also have $D_h(\Bar{x},x_{n_k}) < \epsilon_2$. Furthermore, by Assumption part 3 and third result in Theorem \ref{thm:BPG_convergence_subsequential}, we also have $||x_{k+1}-x_{k}||_2 < \epsilon_3$. Then, for sufficiently small chosen $\epsilon_1$ and $\epsilon_3$, we have that the sparsity pattern won't change in the next iteration. Therefore, the algorithm will apply a Bregman Proximal Gradient algorithm in the same sparsity pattern which allows us to utilize (\ref{eq:bregman_minimization1}). Define the following quantities as in (\ref{eq:D_convergence1});
\begin{equation}
\label{eq:bregman_descent_inequality2}
\begin{array}{ll@{}ll}
I_1(x_{n_k+1}) = I_1(x_{n_k}) = I_1(\Bar{x}) = S \\
z_{n_k} = (x_{n_k})_{S} \\
z_{n_k+1} = (x_{n_k+1})_{S} \\
\Bar{z} = (\Bar{x})_{S} \\ 
\Bar{f}(z) = f(x), \quad x_S = z, x_{S^C} = 0 \\
\Bar{h}(x) = \underset{i \in S}\sum h_i(x)
\end{array}
\end{equation}
Then applying the previous inequality with $u = \Bar{z}$ gives
\begin{equation}
\label{eq:bregman_descent_inequality3}
\begin{array}{rlll}
D_{\Bar{h}}(\Bar{z},z_{n_k+1})  \leq D_{\Bar{h}}(\Bar{z},z_{n_k}) - (1 - \frac{L_h}{L})D_{\Bar{h}}(z_{n_k+1},z_{n_k}) - \frac{1}{L}(\Bar{f}(z_{n_k+1})-\Bar{f}(\Bar{z})).
\end{array}
\end{equation}
Since Bregman distance is non-negative and $f(x_k)$ is non-increasing, we see that in the next iteration $z_{n_k+1}$ is closer to $\Bar{z}$ than $z_{n_k}$, in terms of Bregman distance. This implies $D_h(\Bar{x},x_{n_k+1}) \leq D_h(\Bar{x},x_{n_k})$ since the Bregman distance has a separable form. By using assumption 2, we can guarantee that $||x_{n_k}-\Bar{x}|| < \epsilon_4$, which implies that in the next iteration, (\ref{eq:bregman_descent_inequality3}) will hold again by the same argument. Then, $D_h(\Bar{x},x_k)$ decreases down to zero. Combining this result and applying assumption 2 again, we conclude that sequence $(x_k)$ converges to $\Bar{x}$.

If $||\Bar{x}||_0 < s$, then by using Bregman Stationarity property from \ref{lemma:BPG_convergence_subsequential_limit} and inspecting the optimality condition in (\ref{eq:bregman_stationarity}), we have $\nabla f(\Bar{x}) = 0$ as in the proof of \ref{thm:L_convergence}. As in the previous case for large $k$, we have $||x_{n_k}-\Bar{x}||_2 < \epsilon_1$, $D_h(\Bar{x},x_{n_k}) < \epsilon_2$ for any choice of $\epsilon_1,\epsilon_2 > 0$. With a similar line of reasoning as before we also have  $||x_{k+1}-x_{k}||_2 < \epsilon_3$. Then, from $||x_{n_k}-\Bar{x}||_2 < \epsilon_1$ we get $I_1(x_{n_k}) \supset I_1(\Bar{x})$. By using $||x_{k+1}-x_{k}||_2 < \epsilon_3$, we can also argue that $I_1(x_{n_k+1}) \supset I_1(\Bar{x})$. Then we have two cases. If $I_1(x_{n_k}) = I_1(x_{n_k+1})$ then the previous arguments follow and therefore inequality in the (\ref{eq:bregman_descent_inequality3}) holds, and we observe monotonic decrease in the Bregman distance $D_(\Bar{x},x_{k})$ for large $k$, and we have convergence. If $I_1(x_{n_k}) \neq I_1(x_{n_k+1})$, define the sparsity patterns $S_1 = I_1(x_{n_k+1}) \cap I_1(x_{n_k}), S_2 = I_1(x_{n_k}) \setminus  I_1(x_{n_k+1}), S_3 = I_1(x_{n_k+1}) \setminus I_1(x_{n_k}), S = S_1 \cup S_2 \cup S_3 $. Now let us apply the first version of the descent inequality (\ref{eq:bregman_minimization1}) to the space of $S_1 \cup S_3$.
\begin{equation}
\label{eq:bregman_descent_inequality4}
\begin{array}{ll@{}ll}
\langle
\begin{bmatrix}
\nabla_{S_1} \Bar{f}(z_{n_k}) \\ 
0 \\
\nabla_{S_3} \Bar{f}(z_{n_k})  \\
\end{bmatrix}
,
\begin{bmatrix}
(z_{n_k+1})_{S_1}- (z_{n_k})_{S_1}   \\
0 \\
(z_{n_k+1})_{S_3} \\
\end{bmatrix}

\rangle

+L
D_{\Bar{h}}\left(
\begin{bmatrix}
(z_{n_k+1})_{S_1} \\
0 \\
(z_{n_k+1})_{S_3} \\
\end{bmatrix}
,
\begin{bmatrix}
 (z_{n_k})_{S_1} \\
0 \\
0  \\
\end{bmatrix}
\right)

\leq \\

\langle
\begin{bmatrix}
\nabla_{S_1} \Bar{f}(z_{n_k}) \\ 
0 \\ 
\nabla_{S_3} \Bar{f}(z_{n_k})  \\
\end{bmatrix}
,
\begin{bmatrix}
(\Bar{z})_{S_1}- (z_{n_k})_{S_1}   \\
0 \\
0 \\
\end{bmatrix}
\rangle

+L
D_{\Bar{h}}\left(
\begin{bmatrix}
 (\Bar{z})_{S_1} \\
0 \\
0  \\
\end{bmatrix}
,
\begin{bmatrix}
(z_{n_k})_{S_1} \\
0 \\
0 \\
\end{bmatrix}
\right)

\\

-L
D_{\Bar{h}}\left(
\begin{bmatrix}
 (\Bar{z})_{S_1} \\
0 \\
0  \\
\end{bmatrix}
,
\begin{bmatrix}
(z_{n_k+1})_{S_1} \\
0 \\
(z_{n_k+1})_{S_3} \\
\end{bmatrix}
\right).
\end{array}
\end{equation}

Note that there is a lot of zero padding in the inequalities which we did not need to write. We will add additional terms to fill in the zeros and complete the inequality to a more familiar form. To that end, let us add $\nabla_{S_2} \Bar{f}(z_{n_k})^T(-z_{n_k}) + LD_{\Bar{h}}([0,0,0]^T,[0,(z_{n_k})_{S_2},0]^T)$ to the both sides of the inequality. Then we can compactly write the equation as;
\begin{equation}
\label{eq:bregman_descent_inequality5}
\begin{array}{ll@{}ll}
\nabla \Bar{f}(z_{n_k})^T(z_{n_k+1}-z_{n_k}) + LD_{\Bar{h}}(z_{n_k+1},z_{n_k}) &\leq \nabla \Bar{f}(z_{n_k})^T(\Bar{z}-z_{n_k}) + LD_{\Bar{h}}(\Bar{z},z_{n_k}) \\
&- LD_{\Bar{h}}(\Bar{z},z_{n_k+1}).
\end{array}
\end{equation}
To complete the proof we need to start from the weaker descent inequality initially proved in Theorem \ref{thm:BPG_convergence_subsequential}. Starting with equation (\ref{eq:bpg_descent_lemma})
\begin{equation}
\label{eq:bregman_descent_inequality6}
\begin{array}{ll@{}ll}
f(x_{n_k+1}) + (L-L_h)D_h(x_{n_k+1},x_{n_k}) &\leq f(x_{n_k}) + \nabla f(x_{n_k})^T(x_{n_k+1}- x_{n_k}) \\
&+LD_h(x_{n_k+1},x_{n_k}) \\
&\leq f(x_{n_k}) + \nabla f(x_{n_k})^T(\Bar{x}-x_{n_k}) \\
&+ LD_h(\Bar{x},x_{n_k}) - L D_h(\Bar{x},x_{n_k+1}) \\
&\leq f(\Bar{x}) + LD_h(\Bar{x},x_{n_k}) - L D_h(\Bar{x},x_{n_k+1}).
\end{array}
\end{equation}
In the first inequality, we switched from the $z$ domain to the $x$ domain since they only have non-zero values on $S$. In the second inequality, we used the inequality derived in the (\ref{eq:bregman_descent_inequality5}). The last inequality follows from the convexity of $f$. Rearranging the terms we arrive at the same inequality as in (\ref{eq:bregman_descent_inequality3}). Repeating the same arguments proves the convergence in this case as well. 
\end{proof}

We remark that as a byproduct of the above computation, we have also proved (\ref{eq:bregman_minimization2}) starting from (\ref{eq:bregman_minimization1}).

Lastly, we can argue for the convergence rate of the BPG algorithm by utilizing the convergence rate in the convex case.

\begin{corollary}
\label{cor:convergence_rate_bregman}
Let $(x_k)$ be a sequence generated by the Bregman Proximal Gradient algorithm. Assume that the sequence $(x_k)$ is bounded. Then for sufficiently large k, the function value converges with $O(1/k)$.
\end{corollary}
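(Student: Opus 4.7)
The plan is to reduce the proof to the standard convergence-rate analysis for convex Bregman proximal gradient, applied once the sparsity pattern has been identified. Boundedness of $(x_k)$ yields a limit point $\Bar{x}$; Theorem \ref{thm:bpg_convergence} upgrades this to $x_k \to \Bar{x}$, and Theorem \ref{lemma:BPG_convergence_subsequential_limit} tells us $\Bar{x}$ is $L$-Bregman stationary. I would split on $||\Bar{x}||_0$.

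Case 1: $||\Bar{x}||_0 = s$. The proof of Theorem \ref{thm:bpg_convergence} shows there exists $K$ such that $I_1(x_k) = I_1(\Bar{x}) = S$ for all $k \geq K$. For such $k$, the BPG iteration collapses to a classical Bregman proximal gradient step for the restricted pair $(\Bar{f}, \Bar{h})$ on $\R^{|S|}$, where $\Bar{z} = \Bar{x}_S$ is a global minimizer of $\Bar{f}$ by Lemma \ref{lemma:BPG_stationary_characterization} and convexity. Instantiating Lemma \ref{lemma:three_points_descent} at $u = \Bar{z}$ produces
\[
f(x_{k+1}) - f(\Bar{x}) \leq L D_{\Bar{h}}(\Bar{z}, z_k) - L D_{\Bar{h}}(\Bar{z}, z_{k+1}),
\]
and telescoping from $K$ to $K+k$, together with monotonicity of $f(x_k)$ from Theorem \ref{thm:BPG_convergence_subsequential}, yields
\[
k\bigl(f(x_{K+k}) - f(\Bar{x})\bigr) \leq L D_{\Bar{h}}(\Bar{z}, z_K),
\]
the desired $O(1/k)$ rate.

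Case 2: $||\Bar{x}||_0 < s$. Lemma \ref{lemma:BPG_stationary_characterization} shows, through the separable subproblem structure, that when the support is not saturated every $j \in I_0(\Bar{x})$ admits room to be added to the active set, and the only way the minimizer of the $j$-th separable subproblem can remain at $0$ is for $\nabla_j f(\Bar{x}) = 0$. Combined with $\nabla_i f(\Bar{x}) = 0$ for $i \in I_1(\Bar{x})$, this forces $\nabla f(\Bar{x}) = 0$, so by convexity $\Bar{x}$ is a global minimizer of $f$ on $\R^n$. Since $x_k \to \Bar{x}$ and every iterate is $s$-sparse, $I_1(x_k) \supset I_1(\Bar{x})$ for all large $k$, and the extended descent inequality (\ref{eq:bregman_descent_inequality6}) derived inside the proof of Theorem \ref{thm:bpg_convergence} applies at every such iteration, giving
\[
f(x_{k+1}) - f(\Bar{x}) \leq L D_h(\Bar{x}, x_k) - L D_h(\Bar{x}, x_{k+1}).
\]
Telescoping and using monotonicity of $f(x_k)$ delivers the $O(1/k)$ rate as in Case 1.

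The main obstacle is Case 2. The inequality (\ref{eq:bregman_descent_inequality6}) is derived inside the proof of Theorem \ref{thm:bpg_convergence} at a single iteration and under one fixed sparsity decomposition $(S_1, S_2, S_3)$. To use it for a telescoping bound one must check that the same derivation goes through at every iteration past the index $K$, with $\Bar{x}$ fixed but with supports that may vary from step to step while always containing $I_1(\Bar{x})$. This amounts to re-running the three-point computation of (\ref{eq:bregman_descent_inequality4})--(\ref{eq:bregman_descent_inequality5}) uniformly in $k$, exploiting $\nabla f(\Bar{x}) = 0$ to absorb the cross-support terms into the right-hand side with the correct sign so that they cancel upon summation. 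Once this uniform validity is in place, the telescoping step is routine and the stated $O(1/k)$ rate follows.
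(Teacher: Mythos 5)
Your proposal is correct and follows essentially the same route as the paper: reduce to the classical Bregman proximal gradient rate once the sparsity pattern is identified (via the point convergence of Theorem \ref{thm:bpg_convergence}), and handle the degenerate case $\|\Bar{x}\|_0 < s$ by telescoping the inequality (\ref{eq:bregman_descent_inequality6}), which is exactly the ingredient the paper points to for that case. Your explicit flag that (\ref{eq:bregman_descent_inequality6}) must hold uniformly over the varying decompositions $(S_1,S_2,S_3)$ for $k$ large is a fair and correctly resolved concern, since $I_1(\Bar{x}) \subseteq S_1$ for all such $k$.
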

\begin{proof}
As in the proof of \ref{cor:convergence_rate}, combining the point convergence proof from Theorem \ref{thm:bpg_convergence} with the convergence rate obtained for objective value in  \cite{bregman_descent} gives the desired result. 
\end{proof}

We note that in both corollaries \ref{cor:convergence_rate} and \ref{cor:convergence_rate_bregman}, as long as the sparsity pattern remains the same, $O(1/k)$ decrease rate in the objective is also true but the objective does not decrease to the final value with $O(1/k)$ but to a higher lower bound. In other words, the objective value plot looks like a monotone decreasing piecewise function where the pieces are described by a function in the form of $g(x) = a/x + b$ for some $a,b \in \mathbb{R}$. 

\section{Proof of An Additional Inequality for Linear Convergence Under Strong Convexity}
\label{ap:inequality_linear_conv_strong_conv}
Finally, we present a derivation of an inequality that is useful for proving linear convergence under restricted strong convexity. In the good case when we have convergence to a $s$-sparse solution, the proof follows from the classical convergence argument. However, the limit point may have cardinality less than $s$. In this case, we need to argue separately since there could be infinitely often sparsity pattern changes which hinders the proof. We will derive the inequality for a function $f$ that is smooth with respect to $\frac{1}{2}x^THx$, and IWHT uses some diagonal matrix $D \succ H$, and also $f$ is $s$-RSC with respect to $\frac{1}{2}x^TDx$ i.e. the following inequality holds for some $\sigma$: 
\begin{equation}
\label{eq:restricted_D_strong_convexity}
\begin{array}{ll@{}ll}
f(y) \geq f(x) + \nabla f(x)^T(y-x) + \frac{\sigma}{2}||x-y||_D^2 \quad  \forall x,y: ||x-y||_0 \leq s.
\end{array}
\end{equation}
Since this function satisfies the growth condition, the sequence generated by the IWHT algorithm will have a limit point $\Bar{x}$, and by \ref{thm:L_convergence} the entire sequence will converge to it. Assume that the limit point has cardinality less than $s$. Let $K$ be large so that $(x_k)$ is sufficiently close to $\Bar{x}$ so that $I_1(x_k) \cap I_1(\Bar{x}) $ for all $k \geq K$. If $I_1(x_{k+1}) = I_1(x_{k})$ then classical inequalities used in the proof of linear convergence hold. Assume $I_1(x_{k+1}) \neq I_1(x_k)$. Let us also define sets $S_1 = I_1(x_{k+1}) \cap I_1(x_{k}), S_2 = I_1(x_{k}) \setminus I_1(x_{k+1}), S_3 = I_1(x_{k+1}) \setminus I_1(x_{k})$. Then starting with the descent lemma and writing terms explicitly gives
\begin{equation}
\label{eq:linear_convergence_inequality1}
\begin{array}{ll@{}ll}
f(x_{k+1}) + &\frac{1}{2}||x_{k+1}-x_k||_{D-H}^2 \leq f(x_k)+ \nabla f(x_k)^T(x_{k+1}-x_k) + \frac{1}{2}||x_{k+1}-x_k||_D^2 \\
f(x_{k+1})  & \leq f(x_k) - \frac{1}{2}||\nabla_{S_1} f(x_k)||_{D_1^{-1}}^2 - \frac{1}{2}||\nabla_{S_3} f(x_k)||_{D_3^{-1}}^2 - \nabla_{S_2} f(x_k)^T(x_k)_{S_2} \\
& + \frac{1}{2}||(x_k)_{S_2}||_{D_2}^2 \\
f(x_{k+1})  & \leq f(x_k) - \frac{1}{2}||\nabla_{S_1} f(x_k)||_{D_1^{-1}}^2 - \frac{1}{2}||\nabla_{S_3} f(x_k)||_{D_3^{-1}}^2 - \frac{1}{2}||\nabla_{S_2} f(x_k)||_{D_2^{-1}}^2 \\
& + \frac{1}{2}||(x_k)_{S_2} - D_{S_2}^{-1}\nabla_{S_2} f(x_k)||_{D_2}^2 \\
f(x_{k+1})  & \leq f(x_k) - \frac{1}{2}||\nabla_{S_1} f(x_k)||_{D_1^{-1}}^2 - \frac{1}{2}||\nabla_{S_2} f(x_k)||_{D_2^{-1}}^2.
\end{array}
\end{equation}
The first line follows from the descent lemma. In the second line, we drop the $\frac{1}{2}||x_{k+1}-x_k||_{D-H}^2$ term and write the $x_{k+1}$ explicitly.  In the third line, we complete the squares for $S_2$ terms. In the last line, we use the fact that $||\nabla_{S_3} f(x_k)||_{D_3^{-1}}^2 \geq ||(x_k)_{S_2} - D_{S_2}^{-1}\nabla_{S_2} f(x_k)||_{D_2}^2$ which follows from the minimization problem given in (\ref{eq:D_descent}). 

The second key inequality we will need follows by substituting $y = \Bar{x}, x = x_k$ in (\ref{eq:restricted_D_strong_convexity})
\begin{equation}
\label{eq:linear_convergence_inequality2}
\begin{array}{lllllll}
&f(\Bar{x}) & \geq f(x_k)+ \nabla f(x_k)^T(\Bar{x}-x_k) + \frac{\sigma}{2}||\Bar{x}-x_k||_D^2 \\
&f(\Bar{x})  & \geq f(x_k) + \nabla_{S_1} f(x_k)^T(\Bar{x}-x_k)_{S_1} + \frac{\sigma}{2}||(\Bar{x}-x_k)_{S_1}||_{D_1}^2 + \frac{\sigma}{2}||(x_k)_{S_2}||_{D_2}^2 \\
&& - \nabla_{S_2} f(x_k)^T(x_k)_{S_2}  \\
&f(\Bar{x})  & \geq f(x_k) - \frac{1}{2\sigma}||\nabla_{S_1} f(x_k)||_{D_1^{-1}}^2 - \frac{1}{2\sigma}||\nabla_{S_2} f(x_k)||_{D_2^{-1}}^2 \\
&&+\frac{1}{2}||(x_k)_{S_2} - D_{S_2}^{-1}\nabla_{S_2}f(x_k)||_{D_2} \\
&&\frac{1}{2\sigma}||\nabla_{S_1} f(x_k)||_{D_1^{-1}}^2 + \frac{1}{2\sigma}||\nabla_{S_2} f(x_k)||_{D_2^{-1}}^2\\
&&\geq f(x_k) - f(\Bar{x}) + \frac{\sigma}{2}||(x_k)_{S_2} - \sigma^{-1}D_{S_2}^{-1}\nabla_{S_2}f(x_k)||_{D_2}.
\end{array}
\end{equation}
The first line is simply substitution. In the second line, some terms are discarded by exploiting the sparsity patterns. In the third line, we take the minimum over $\Bar{x}$ and also complete the squares for $S_2$ terms. In the last line gradient terms are rearranged. We can further drop the  $\frac{\sigma}{2}||(x_k)_{S_2} - \sigma^{-1}D_{S_2}^{-1}\nabla_{S_2}f(x_k)||_{D_2}$ term on the right-hand side as we do not need that term for the proof. Combining the last inequalities in (\ref{eq:linear_convergence_inequality1}) and (\ref{eq:linear_convergence_inequality2}) gives;
\begin{equation}
\label{eq:linear_convergence_inequality3}
\begin{array}{rll}
f(x_k)- f(x_{k+1}) &\geq \sigma(f(x_k)-f(\Bar{x})) \\
f(x_k)(1-\sigma) - \sigma f(\Bar{x}) &\geq f(x_{k+1}) \\
(1-\sigma)(f(x_k)-f(x^*)) &\geq f(x_{k+1}) - f(x^*).
\end{array}
\end{equation}
From this inequality, the linear convergence in the $f(x_k)$ sequence is clear. 

\section{Extracting Primal Variable From Dual Solution}
\label{ap:get_primal}
In this section, we briefly go over how to compute primal variables from dual optimal solutions.

\begin{lemma}
Let $Z$ be an optimal solution to (\ref{eq:compute_D_L_dual}). Then a corresponding primal optimal solution can be computed by;
\begin{equation}
\label{eq:primal_solution_L}
\begin{array}{ll@{}ll}
w_i = \frac{Z_i^T Z C_i}{Z_i^TZ_i} \quad \forall i = 1,..,n.
\end{array}
\end{equation}
\end{lemma}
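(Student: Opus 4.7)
The plan is to derive the formula from the complementary slackness condition in the KKT system for the primal--dual SDP pair (\ref{eq:compute_D_L})--(\ref{eq:compute_D_L_dual}). First I would note that Slater's condition is satisfied on both sides: any $w$ with $w_i > \lambda_{\max}(C)$ for all $i$ is strictly primal-feasible, and $Z = I_n$ is strictly dual-feasible. Strong duality therefore holds, and the given dual optimum $Z$ is paired with some primal optimum $w^*$ for which all KKT conditions --- in particular complementary slackness --- are in force.

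The central step is to observe that complementary slackness between the PSD slack $S := \Diag(w^*) - C \succeq 0$ and the PSD dual variable $Z \succeq 0$ is equivalent to the matrix identity $ZS = 0$. The trace inner product $\langle Z, S\rangle$ vanishes thanks to strong duality combined with $Z_{ii} = 1$: indeed, $\langle Z, \Diag(w^*)\rangle = \sum_i Z_{ii} w_i^* = \mathbf{1}^T w^* = \langle Z, C\rangle$, where the last equality is the equal-optima condition. Since $Z, S \succeq 0$ with $\langle Z, S\rangle = 0$, factoring $Z = UU^T$ gives $\|U^T S^{1/2}\|_F^2 = \tr(S^{1/2} Z S^{1/2}) = \langle Z, S\rangle = 0$, which forces $ZS = 0$, equivalently $Z\,\Diag(w^*) = ZC$.

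Reading the $i$-th column of this matrix identity and using $\Diag(w^*) e_i = w_i^* e_i$, we obtain $w_i^* Z_i = Z C_i$, where (by symmetry of $Z$ and $C$) $Z_i$ and $C_i$ may be read either as $i$-th rows expressed as column vectors, following the paper's convention, or equivalently as $i$-th columns. Taking the inner product of both sides with $Z_i$ yields
\[
w_i^* \, \|Z_i\|_2^2 \;=\; Z_i^T Z\, C_i,
\]
and the dual constraint $Z_{ii} = 1$ forces $\|Z_i\|_2^2 = \sum_j Z_{ji}^2 \geq Z_{ii}^2 = 1 > 0$, so division gives the claimed formula.

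The only real obstacle is verifying strong duality so that the KKT apparatus applies; once complementary slackness is secured, the rest amounts to a one-line orthogonal projection onto $Z_i$. Non-uniqueness of primal optima is harmless here: the scalar equation $w_i^* \|Z_i\|_2^2 = Z_i^T Z C_i$ determines $w_i^*$ uniquely, so any primal optimum paired with $Z$ via complementary slackness must agree with the value produced by the formula, and consequently the formula always returns a valid primal optimum.
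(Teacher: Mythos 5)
Your proposal is correct and follows essentially the same route as the paper's proof: complementary slackness between $Z$ and the slack $\Diag(w)-C$ yields the matrix identity $Z\Diag(w)=ZC$, whose $i$-th column is then resolved for $w_i$ by projecting onto $Z_i$ (the paper's ``least squares'' step). You additionally verify Slater's condition, the implication $\langle Z,S\rangle=0\Rightarrow ZS=0$ for PSD matrices, and the nonvanishing of the denominator via $Z_i^TZ_i\geq Z_{ii}^2=1$, all of which the paper leaves implicit or delegates to a citation.
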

\begin{proof}
Writing the Complementary Slackness condition corresponding to Primal-Dual pair models (\ref{eq:compute_D_L}) and (\ref{eq:compute_D_L_dual}) gives \cite{yinyu_lecture_notes};
\begin{equation}
\label{eq:comp_slack_L}
\begin{array}{ll@{}ll}
&\tr(Z(\Diag(w)-C)) = 0 \\
\iff & Z(\Diag(w)-C) = 0 \\
&Z\Diag(w) = ZC \\
&w_i = \frac{Z_i^T Z C_i}{Z_i^TZ_i} \qquad \forall i = 1,..,n.
\end{array}
\end{equation}
The first equation is simply Complementary Slackness. The second equation is implied by the positive semidefinite cone constraint on $Z$ and $\Diag(w)-C$. The third equation rewrites the second equation isolating $w$ term. From the third equation, there are many ways to solve for $w_i$ values, and in fact the problem is separable in each $w_i$. The solution described in (\ref{eq:primal_solution_L}) uses least squares to compute $w_i$ values since $Z$ is often a numerical approximation to the optimal solution of (\ref{eq:compute_D_L_dual}).
\end{proof}

Computing a primal solution to the model (\ref{eq:compute_D_Q_dual}) from a dual solution is easier with a derivation from Lagrangian dual. 
\begin{lemma}
Let $Z$ be an optimal solution to (\ref{eq:compute_D_Q_dual}). Then a corresponding primal optimal solution can be computed by;
\begin{equation}
\label{eq:primal_solution_Q}
\begin{array}{ll@{}ll}
w_i = Z_{ii} \quad \forall i = 1,..,n.
\end{array}
\end{equation}
\end{lemma}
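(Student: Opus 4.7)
The plan is to derive the result directly from Lagrangian duality, since the primal problem is a strictly convex quadratic SDP with a natural Slater point. I would first form the Lagrangian of the primal model (\ref{eq:compute_D_Q})
\begin{equation*}
L(w,Z) \;=\; \tfrac{1}{2}w^Tw - \langle Z,\,\Diag(w)-C\rangle \;=\; \tfrac{1}{2}w^Tw - \sum_{i=1}^n Z_{ii}w_i + \langle C,Z\rangle,
\end{equation*}
with dual multiplier $Z\succeq 0$ attached to the conic constraint $\Diag(w)-C\succeq 0$.

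Next, I would minimize $L(w,Z)$ over the unconstrained primal variable $w$. Since $L$ is strongly convex in $w$, the stationarity condition $\nabla_w L = w - \diag(Z) = 0$ yields the unique inner minimizer $w_i = Z_{ii}$ for all $i$. Substituting this back produces the Lagrange dual function $\langle C,Z\rangle - \tfrac{1}{2}\sum_i Z_{ii}^2$, which, together with $Z\succeq 0$ and the renaming $k_i = Z_{ii}$, reproduces exactly (\ref{eq:compute_D_Q_dual}). Thus the mapping $Z \mapsto (w_i = Z_{ii})$ is by construction the primal response that achieves the inner minimum of the Lagrangian at any dual iterate $Z$.

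Finally, to conclude that this mapping sends a dual \emph{optimum} $Z^*$ to a primal \emph{optimum} $w^*$, I invoke strong duality. Slater's condition is immediate for the primal: picking $w_i = \lambda_{\max}(C)+1$ for all $i$ gives $\Diag(w)-C \succ 0$, so strict feasibility holds. Hence strong duality holds, the KKT system is necessary and sufficient, and any primal-dual optimal pair must satisfy the stationarity relation $w_i^* = Z^*_{ii}$ derived above. This closes the proof. The only step requiring any care is the verification of Slater's condition, but that is essentially free here; no complementary slackness manipulation of the type needed in the linear-cost case (\ref{eq:compute_D_L_dual}) is required, which is precisely why the quadratic-cost model admits the clean recovery formula $w_i = Z_{ii}$.
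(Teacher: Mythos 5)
Your proposal is correct and follows essentially the same route as the paper: both form the Lagrangian of the quadratic-cost primal model and minimize over $w$ to obtain $w=\Diag(Z)$. The only difference is that you additionally verify Slater's condition to justify that the stationarity relation transfers from arbitrary dual iterates to an optimal pair, a point the paper leaves implicit; this is a welcome bit of extra rigor but not a different argument.
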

\begin{proof}
Writing the Lagrangian corresponding to (\ref{eq:compute_D_Q}) gives
\begin{equation}
\label{eq:compute_D_Q_lagrangian}
\begin{array}{ll@{}ll}
L(w,Z) = \frac{1}{2} w^Tw - \tr((\Diag(w)-C)Z).
\end{array}
\end{equation}
Minimizing over $w$ to get the dual function we get
\begin{equation}
\label{eq:compute_D_Q_lagrangian2}
\begin{array}{ll@{}ll}
w = \Diag(Z),
\end{array}
\end{equation}
which is exactly as described in (\ref{eq:primal_solution_Q}).
\end{proof}

\section{Additional Numerical Results}
\label{ap:additional_numerical}
In this section, we present the numerical findings from the CS experiments in detail. Algorithms are encoded in the table compactly as Newton + (Descent Condition Applied).  $\mathcal{D}$ denotes a dictionary of weights used in the CIWHT algorithm. Each $\mathcal{D}_i$ is explained in the Table \ref{tab:descent_dictionary}. Superscript denotes the period, i.e. how many times each of the weights in the CIWHT algorithm is applied before changing it for another. We remark again that Newton $+ \mathcal{D}_1^1$ denotes a complicated search algorithm that combines line search strategy, gradient-based restart and Newton steps for acceleration. 

\begin{table}[ht]
\centering
\begin{tabular}{|c|c|c|c|c|}
\hline
& \multicolumn{3}{|c|}{Order of Descent} \\ \hline
$\mathcal{D}_1$ & $D_Q$ & $D_L$ & $L$ \\ \hline
$\mathcal{D}_2$ & $D_L$ & $D_Q$ & $L$ \\ \hline
$\mathcal{D}_3$ & $D_L$ & $L$ & $D_Q$ \\ \hline
$\mathcal{D}_4$ & $D_Q$ & $L$ & $D_L$ \\ \hline
\end{tabular}
\caption{Order of descent conditions applied for each dictionary}
\label{tab:descent_dictionary}
\end{table}

\begin{table}[ht]
\centering
\begin{tabular}{|c|c|c|c|c|}
\hline
Sparsity &Newton + L &   Newton + $D_L$ &   Newton + $D_Q$ &   Newton + $\mathcal{D}_1^1$ \\
\hline
  5 &         99.8 &          100   &          100   &                        100   \\ \hline
  6 &        100   &          100   &          100   &                        100   \\ \hline
  7 &         99.8 &          100   &          100   &                        100   \\ \hline
  8 &        100   &          100   &          100   &                        100   \\ \hline
  9 &        100   &          100   &          100   &                        100   \\ \hline
 10 &        100   &          100   &          100   &                        100   \\ \hline
 11 &        100   &          100   &          100   &                        100   \\ \hline
 12 &        100   &          100   &          100   &                        100   \\ \hline
 13 &        100   &          100   &          100   &                        100   \\ \hline
 14 &        100   &          100   &          100   &                        100   \\ \hline
 15 &         99.8 &          100   &          100   &                        100   \\ \hline
 16 &        100   &          100   &          100   &                        100   \\ \hline
 17 &         99.8 &           99   &           99.4 &                         99.8 \\ \hline
 18 &        100   &           99.6 &           99.6 &                         99.6 \\ \hline
 19 &        100   &           98.8 &           99.2 &                         99.8 \\ \hline
 20 &         99.8 &           99   &           99.4 &                         99.8 \\ \hline
 21 &         99.4 &           98   &           98.4 &                         98.8 \\ \hline
 22 &         99   &           97.6 &           98.2 &                         98.8 \\ \hline
 23 &         98.2 &           96   &           95.8 &                         97.4 \\ \hline
 24 &         97.2 &           93.8 &           93.2 &                         95.8 \\ \hline
 25 &         95   &           89   &           92.8 &                         93.8 \\ \hline
 26 &         91.2 &           84.8 &           89   &                         90.4 \\ \hline
 27 &         84.2 &           79.6 &           81.2 &                         87.4 \\ \hline
 28 &         73   &           74   &           76.2 &                         81.4 \\ \hline
 29 &         60.2 &           58   &           63.4 &                         67   \\ \hline
 30 &         44   &           48   &           51.6 &                         60.8 \\ \hline
 31 &         24.8 &           35.4 &           34.8 &                         41.8 \\ \hline
 32 &         17.8 &           22.2 &           23.4 &                         28   \\ \hline
 33 &          4.4 &           13.6 &           12.2 &                         14.8 \\ \hline
 34 &          2   &            5.4 &            5.8 &                          7.4 \\ \hline
 35 &          0.8 &            1.2 &            1.6 &                          1.4 \\ \hline
\end{tabular}
\caption{Recovery rates for randomly generated sparse linear systems of size $64\times 256$ for changing sparsity levels.}
\label{tab:extra_numerical_results1}
\end{table}

\begin{table}[ht]
\centering
\begin{tabular}{|c|c|c|c|c|}
\hline
Sparsity &Newton + $\mathcal{D}_1^{10}$ &   Newton + $\mathcal{D}_2^1$ &  Newton + $\mathcal{D}_2^{10}$ &   Newton + $\mathcal{D}_3^1$ \\
\hline
  5 &                      100   &                        100   &                        100   &                        100   \\  \hline
  6 &                      100   &                        100   &                        100   &                        100   \\ \hline
  7 &                      100   &                        100   &                        100   &                        100   \\ \hline
  8 &                      100   &                        100   &                        100   &                        100   \\ \hline
  9 &                      100   &                        100   &                        100   &                        100   \\ \hline
 10 &                      100   &                        100   &                        100   &                        100   \\ \hline
 11 &                      100   &                        100   &                        100   &                        100   \\ \hline
 12 &                      100   &                        100   &                        100   &                         99.8 \\ \hline
 13 &                      100   &                        100   &                        100   &                        100   \\ \hline
 14 &                      100   &                        100   &                        100   &                        100   \\ \hline
 15 &                      100   &                        100   &                        100   &                        100   \\ \hline
 16 &                      100   &                        100   &                        100   &                         99.8 \\ \hline
 17 &                       99.8 &                         99.6 &                         99.8 &                         99.8 \\ \hline
 18 &                       99.8 &                        100   &                        100   &                        100   \\ \hline
 19 &                       99.6 &                         99.4 &                         99.6 &                         99.6 \\ \hline
 20 &                       99.8 &                         99.8 &                         99.4 &                        100   \\ \hline
 21 &                       98.4 &                         99   &                         99.4 &                         99.4 \\ \hline
 22 &                       99   &                         98.4 &                         98.8 &                         98.8 \\ \hline
 23 &                       96.6 &                         97   &                         97.8 &                         97.4 \\ \hline
 24 &                       96   &                         95.6 &                         96   &                         95.4 \\ \hline
 25 &                       95   &                         95.2 &                         93.4 &                         93.8 \\ \hline
 26 &                       92   &                         92.2 &                         91.8 &                         90.2 \\ \hline
 27 &                       85   &                         84.6 &                         85.6 &                         85.8 \\ \hline
 28 &                       83.6 &                         82.4 &                         81.2 &                         83.2 \\ \hline
 29 &                       67.6 &                         69.4 &                         66.4 &                         68.8 \\ \hline
 30 &                       56.8 &                         56.4 &                         55.6 &                         60   \\ \hline
 31 &                       41.8 &                         41.2 &                         40.6 &                         42.6 \\ \hline
 32 &                       27   &                         30.8 &                         26.2 &                         27.8 \\ \hline
 33 &                       14   &                         14.6 &                         15.4 &                         15.2 \\ \hline
 34 &                        6.8 &                          6.8 &                          7.4 &                          5.8 \\ \hline
 35 &                        1.6 &                          2.4 &                          1.2 &                          2.4 \\ \hline
\end{tabular}
\caption{Recovery rates for randomly generated problems (cont.).}
\label{tab:extra_numerical_results2}
\end{table}

\begin{table}[ht]
\centering
\begin{tabular}{|c|c|c|c|}
\hline
Sparsity &Newton + $\mathcal{D}_3^{10}$ &   Newton + $\mathcal{D}_4^1$ &  Newton + $\mathcal{D}_4^{10}$ \\
\hline
  5 &            100   &            100   &                     100   \\ \hline
  6 &            100   &            100   &                     100   \\ \hline
  7 &            100   &            100   &                     100   \\ \hline
  8 &            100   &            100   &                     100   \\ \hline
  9 &            100   &            100   &                     100   \\ \hline
 10 &            100   &            100   &                     100   \\ \hline
 11 &            100   &            100   &                     100   \\ \hline
 12 &            100   &             99.8 &                     100   \\ \hline
 13 &            100   &            100   &                     100   \\ \hline
 14 &            100   &            100   &                     100   \\ \hline
 15 &            100   &             99.8 &                     100   \\ \hline
 16 &            100   &            100   &                     100   \\ \hline
 17 &             99.4 &            100   &                      99.6 \\ \hline
 18 &            100   &             99.8 &                      99.8 \\ \hline
 19 &             99.8 &            100   &                      99.8 \\ \hline
 20 &             99.8 &             99.6 &                     100   \\ \hline
 21 &             99.6 &             99.2 &                      99.2 \\ \hline
 22 &             98.6 &             98.6 &                      98.4 \\ \hline
 23 &             97.6 &             97.6 &                      97.8 \\ \hline
 24 &             95.8 &             95   &                      96   \\ \hline
 25 &             94.6 &             94   &                      94.4 \\ \hline
 26 &             91.2 &             90.2 &                      91.2 \\ \hline
 27 &             83.6 &             88   &                      85.8 \\ \hline
 28 &             81   &             82.4 &                      80.6 \\ \hline
 29 &             70.4 &             70.8 &                      67.4 \\ \hline
 30 &             58.4 &             57.6 &                      56.4 \\ \hline
 31 &             40.4 &             42.6 &                      43.4 \\ \hline
 32 &             28   &             28   &                      26.8 \\ \hline
 33 &             15.2 &             16.2 &                      15.2 \\ \hline
 34 &              8.6 &              7   &                       7.8 \\ \hline
 35 &              3.2 &              2.6 &                       3.4 \\ \hline
\end{tabular}
\caption{Recovery rates for randomly generated problems (cont.).}
\label{tab:extra_numerical_results3}
\end{table}

\end{document}